\newtheorem{theorem}{Theorem}
\newtheorem{proposition}[theorem]{Proposition}
\newtheorem{lemma}[theorem]{Lemma}
\newtheorem{corollary}[theorem]{Corollary}
\newtheorem*{conjecture}{Conjecture}
\newtheorem{maintheorem}{Theorem}
\newcommand{\R}{\mathbb{R}}
\newcommand{\Z}{\mathbb{Z}}
\newcommand{\N}{\mathbb{N}}
\newcommand{\Le}{\mathbb{L}^e}
\newcommand{\cE}{\mathscr{E}}
\newcommand{\cP}{\mathscr{P}}
\newcommand{\cQ}{\mathscr{Q}}
\newcommand{\Xe}{\tilde{X}^e}
\newcommand{\tZ}{\tilde{z}}
\def\Fix#1{{\rm Fix}\,#1}
\def\mod#1{\hskip 10pt \left({\rm mod\ }#1\right)}
\def\fl#1{\lfloor #1\rfloor}
\def\endproof{\hfill\vbox{\hrule
    \hbox{\vrule\kern4pt\vbox{\kern4pt
    \kern4pt}\kern4pt\vrule}\hrule}\bigskip}
\begin{document}

\title{Near-integrable behaviour\\in a family of discretised rotations}


\author{Heather Reeve-Black}
\address{School of Mathematical Sciences, Queen Mary, 
University of London, London E1 4NS, UK}
\email{h.reeve-black@qmul.ac.uk}

\author{Franco Vivaldi}
\address{School of Mathematical Sciences, Queen Mary,
University of London, London E1 4NS, UK}
\email{f.vivaldi@maths.qmul.ac.uk}
\urladdr{http://www.maths.qmul.ac.uk/\~{}fv}

\begin{abstract}
We consider a one-parameter family of invertible maps of a two-dimensional 
lattice, obtained by discretising the space of planar rotations.
We let the angle of rotation approach $\pi/2$, and show that the limit of 
vanishing discretisation is described by an integrable piecewise-smooth 
Hamiltonian flow, whereby the plane foliates into families of invariant 
polygons with an increasing number of sides.
Considered as perturbations of the flow, the lattice maps assume a 
different character, described in terms of strip maps, 
a variant of those found in outer billiards of polygons.
The perturbation introduces phenomena reminiscent of the 
Kolmogorov-Arnold-Moser scenario: a positive fraction of the unperturbed curves survives. 
We prove this for symmetric orbits, under a condition that allows us to 
obtain explicit values for their density, the latter being a rational 
number typically less than 1. 
This result allows us to conclude that the infimum of the density of all 
surviving curves ---symmetric or not--- is bounded away from zero.

\end{abstract}

\date{\today}

\maketitle

\section{Introduction}

The study of near-integrable Hamiltonian dynamics on a discrete phase space 
presents a unique set of problems. 
On the continuum, the orbits of an integrable symplectic nonlinear map 
are rotations on invariant tori, which, generically, 
are quasi-periodic. According to Kolmogorov-Arnold-Moser (KAM) theory, a positive fraction of these tori, 
identified by their frequency, will survive a sufficiently small smooth perturbation.
The complement of KAM tori consists of a hierarchical arrangement of 
island chains and thin stochastic layers. In low-dimensions, the tori 
disconnect the space and hence ensure stability.

Reproducing these structures in a discrete space is problematic, 
due to the lack of a framework for perturbation theory.
Typically, quasi-periodic orbits do not exist. Invariant sets acting
as surrogate KAM surfaces must thus be identified, and their evolution 
must be tracked as a perturbation parameter is varied.
Even in low dimensions, these invariant sets need not disconnect 
the space, so their relevance to stability must be re-assessed.

There are various approaches to space discretisation. For an algebraic system 
it is natural to replace the real or complex coordinate fields by a finite field. 
Because this procedure erases all topological information, in the discrete phase 
space there is no near-integrable regime at all, and one witnesses a discontinuous 
transition from integrable to non-integrable behaviour. 
This transition manifests itself probabilistically via a (conjectured) abrupt 
change in the asymptotic (large field) distribution of the periods of the orbits 
\cite{JogiaRobertsVivaldi,RobertsVivaldi:09}.

Round-off in computer arithmetic brings about an equally blunt discretisation of space.
Here a small perturbation causes the dynamics to collapse onto a discrete set. 
In rare cases, round-off fluctuations act like small amplitude noise, and give rise 
to Gaussian transport; the latter, however, wipes out all small-scale dynamical features. 
More commonly, the noise model is inappropriate, but there is no general theory to rely on. 
(Shadowing theory is unhelpful here because it requires hyperbolicity \cite[Section 18.1]{KatokHasselblatt}.)
In all, the literature devoted to the study of deterministic (as opposed 
to probabilistic) manifestation of near-integrability in computer arithmetic 
is minimal \cite{EarnTremaine,BlankKrugerPustylnikov,ZhangVivaldi} (see also \cite{Blank:97}).

A different kind of space discretisation occurs in piecewise isometric systems, 
as the combined effect of discontinuity and rationality. In isometries involving 
rational rotations, the space gets tessellated by polygons with algebraic
number coordinates. As these polygons move rigidly, the phase space is discrete.
Outer billiards of polygons are symplectic maps of this type, which feature a skeletal 
version of divided phase space (see \cite{Schwartz}, and references therein).
Under appropriate rationality conditions, these systems support a countable family 
of bounding invariant sets, which prevent orbits from escaping to infinity 
\cite{VivaldiShaidenko,Kolodziej,GutkinSimanyi}. These sets, which resemble more 
a chain of integrable resonant zones than KAM invariants, are uniformly 
distributed on the plane ---this theme will also appear in the present work. 
The existence of unbounded orbits for some irrational parameters has been 
a significant recent advance \cite{Schwartz:07} (see also \cite{DolgopyatFayad}).

In this paper we explore discrete near-integrability in the family of invertible 
lattice maps
\begin{equation} \label{eq:Map}
F:\,\Z^2\rightarrow\Z^2\qquad
(x,y)\,\mapsto\,(\lfloor \lambda x \rfloor - y, \,x)
\qquad \lambda=2\cos(2\pi\nu)
\end{equation}
where $\lfloor \cdot \rfloor$ is the floor function 
---the largest integer not exceeding its argument.
If we remove the floor function in equation (\ref{eq:Map}),
we obtain a one-parameter family of linear maps of the plane, 
$(x,y)\mapsto (\lambda x-y,x)$, which are linearly conjugate 
to a rotation by the angle $2\pi\nu$, where $\nu$ is the rotation number.
The floor function provides the discretisation, rounding the image point 
$(\lambda x-y,x)$ to the nearest lattice point on the left. 
An essential property of this model is its invertibility, 
which a typical round-off scheme applied to a rotation
does not have. Furthermore, rounding by the floor function 
---as opposed to the nearest integer--- is arithmetically 
nicer and has fewer symmetries.
In the lattice map $F$, the discretisation length is fixed, and the 
limit of vanishing discretisation corresponds to motions at infinity.
This asymptotic regime is our main interest.

The deceptively simple model (\ref{eq:Map}) displays a rich landscape 
of mathematical phenomena, connecting discrete dynamics and arithmetic. 
This model originated in dynamical system theory
\cite{Vivaldi:94b,LowensteinHatjispyrosVivaldi,LowensteinVivaldi:98,
LowensteinVivaldi:00,BosioVivaldi,VivaldiVladimirov,KouptsovLowensteinVivaldi},
and was subsequently studied in number theory, within the context of
shift radix systems \cite{AkiyamaBrunottePethoThuswaldner,AkiyamaBrunottePethoSteiner}.
The following unsolved question 
\cite{AkiyamaBrunottePethoThuswaldner,Vivaldi:06}
distills the difficulties encountered in the analysis of this model.\footnote{A related
general conjecture on the boundedness of discretised Hamiltonian rotations was first 
formulated in \cite{Blank}.}
\begin{conjecture} \label{conj:Periodicity}
For all real $\lambda$ with $|\lambda|<2$, all orbits of $F$ are periodic.
\end{conjecture}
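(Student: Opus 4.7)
The plan is to reduce the conjecture to boundedness of all orbits, and then attack boundedness by constructing, for every $\lambda \in (-2,2)$, a family of invariant bounding sets in the lattice. Since $F$ is an invertible map of the countable set $\Z^2$, any orbit confined to a bounded region of the plane visits only finitely many lattice points, and hence, by the pigeonhole principle, is periodic. Thus Conjecture \ref{conj:Periodicity} is equivalent to the assertion that every $F$-orbit is bounded, and I would phrase the whole argument in those terms.

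To establish boundedness I would work in the near-integrable regime advertised in the introduction. Write $\lambda = 2\cos(2\pi\nu)$; when $\nu = p/q$ is rational, the underlying linear map is periodic of period $q$, so on scales much larger than the lattice spacing $F$ looks like a small perturbation of an integrable rotation of finite order. For each such $\nu$, the first step is to identify an integrable piecewise-linear Hamiltonian flow, analogous to the one constructed near $\nu=1/4$ in the present paper, whose trajectories are closed polygons foliating the plane at infinity. The second step is to represent the deviation between $F$ and this integrable limit as a strip-map perturbation, and to use this representation to prove that a positive density of polygonal invariant curves survives, trapping all nearby orbits. The third step is an inductive renormalisation: the restriction of $F$ to the complement of the surviving curves should itself be conjugate, on a suitable scale, to another map within the same family, so one iterates the argument and recursively exhausts the plane.

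The main obstacle is twofold. First, the strategy sketched above is naturally adapted to $\nu$ rational with small denominator, or to $\lambda$ lying in a neighbourhood of such values; the genuinely difficult regime is when $\nu$ is irrational of bad Diophantine type, where the underlying rotation carries no obvious renormalisation skeleton on which to hang the construction. Second, even in the rational case the renormalisation step is delicate, because the residual lattice map induced on the complement of the bounding polygons need not belong to the family \eqref{eq:Map}, so the closure of that family under renormalisation would itself have to be proved, together with uniform control on how the perturbation size evolves under iteration. I expect that the techniques developed in this paper (symmetric orbits, explicit density of surviving curves) can be pushed to give unconditional boundedness in a full neighbourhood of $\lambda = 0$, and similarly near each $\lambda$ associated with a rational rotation number; closing the remaining gap for all $|\lambda|<2$ simultaneously is precisely the arithmetic hurdle that keeps the conjecture open.
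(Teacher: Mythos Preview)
The statement you are attempting is labelled a \emph{conjecture} in the paper and is explicitly described there as unsolved; the paper offers no proof, so there is nothing to compare your argument against. Your reduction of periodicity to boundedness via the pigeonhole principle is correct and matches the paper's own observation (``Due to invertibility, periodicity is equivalent to boundedness''), and you are candid that your sketch does not close the gap.

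There is, however, a genuine flaw in the mechanism you propose, not merely a missing technical estimate. You write that the surviving polygonal invariant curves would be ``trapping all nearby orbits''. In the continuum this is indeed how KAM curves confine orbits in two dimensions, but on the lattice $\Z^2$ a periodic orbit of $F$ is a \emph{finite} set of points, not a topological circle, and it does not disconnect the lattice: other orbits can pass through the gaps. The paper flags exactly this issue in the introduction (``Even in low dimensions, these invariant sets need not disconnect the space, so their relevance to stability must be re-assessed''), and its main results (theorems~A and~B) establish only that many periodic orbits exist near $\lambda=0$; they furnish no barrier to escape for the remaining orbits, and stability is explicitly listed among the issues the authors have not addressed. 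So the step from ``a positive density of invariant polygons survives'' to ``all orbits are bounded'' has no lattice analogue, and the renormalisation scheme you outline, which relies on iterating this trapping, cannot get started.
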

Due to invertibility, periodicity is equivalent to boundedness. 
This conjecture holds trivially for $\lambda=0,\pm1$, where the map $F$ is of finite order.
Beyond this, the boundedness of all round-off orbits has been proved for only 
\textit{eight} values of $\lambda$, which correspond to the rational 
values of the rotation number $\nu$ for which $\lambda$ is a quadratic irrational:
\begin{equation}\label{eq:Lambdas}
\lambda=\frac{\pm1\pm\sqrt{5}}{2},\quad \pm\sqrt{2},\quad \pm\sqrt{3}.
\end{equation}
(The denominator of $\nu$ is $5,\,10,\,8$, and 12, respectively.)
In these cases the map $F$ admits a dense and uniform embedding in 
a two-dimensional torus, where the round-off map extends continuously 
to a piecewise isometry, which has {\it zero entropy} (and is not ergodic).
The natural density on the lattice $\Z^2$ is carried into the Lebesgue measure,
namely the Haar measure on the torus.
The case $\lambda=(1-\sqrt{5})/2$ was established in 
\cite{LowensteinHatjispyrosVivaldi}, with computer assistance.
Similar techniques were used to extend the result to the other parameter 
values, but only for a set of initial conditions having full density
\cite{KouptsovLowensteinVivaldi}.
The conjecture for the eight parameters \eqref{eq:Lambdas} was settled 
in \cite{AkiyamaBrunottePethoSteiner} with an analytical proof.
For any other rational value of $\nu$, there is a similar embedding
in a piecewise isometry of a higher-dimensional torus; these systems 
are still unexplored, even in the cubic case.

Irrational values of $\nu$ bring about a different dynamics, and a different 
theory. The simplest cases correspond to rational values of $\lambda$, and, 
in particular, to rational numbers whose denominator is the power of a prime $p$. 
In this case the map $F$ admits a dense and uniform embedding in the ring 
$\Z_p$ of $p$-adic integers \cite{BosioVivaldi}. 
The embedded system extends continuously to the composition of a full 
shift and an isometry (which has {\it positive entropy}), 
and the natural density in $\Z^2$ is now carried into the Haar measure on $\Z_p$.
This construct was later used to prove a central limit theorem for the 
departure of the round-off orbits from the unperturbed ones \cite{VivaldiVladimirov}. 
This phenomenon injects a probabilistic element in the determination of the 
period of the lattice orbits, highlighting the nature of the difficulties 
that surround conjecture \ref{conj:Periodicity}. Very recently,
Akiyama and Peth\H{o} \cite{AkiyamaPetho} proved that (\ref{eq:Map}) has
infinitely many periodic orbits for any parameter value.

In this work we consider a new regime, namely the limit $\lambda\to 0$ 
of equation (\ref{eq:Map}), corresponding to the rotation number $\nu\to 1/4$. 
This is one of five limits (the other limits being $\lambda\to\pm1,\pm2$) 
where the dynamics at the limit is trivial because there is no round-off. 
After scaling, we embed the lattice $\Z^2$ in 
$\R^2$, and show that there is a non-smooth integrable Hamiltonian 
\textit{flow} (not a rotation), which represents the limiting unperturbed 
dynamics. This integrable system is non-linear, namely its time-advance 
map satisfies a twist condition.
Thus the limit $\lambda\to 0$ is singular. The parameter $\lambda$ 
acts as a perturbation parameter, and a discrete version of near-integrable 
symplectic dynamics emerges on the lattice when the perturbation is switched on.

\begin{figure}[h]
\centering
\hfil
\epsfig{file=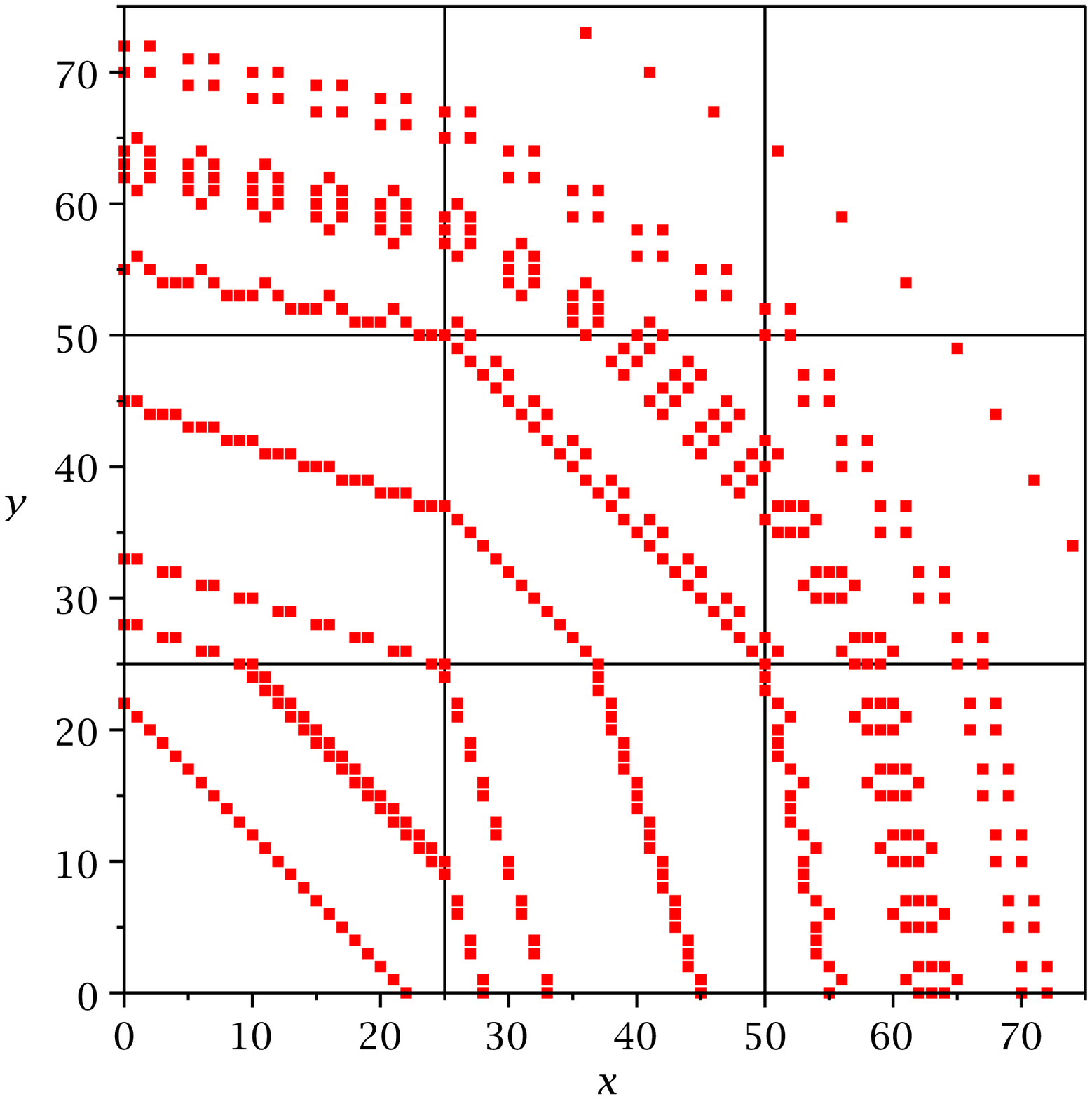,scale=0.35}
\quad
\epsfig{file=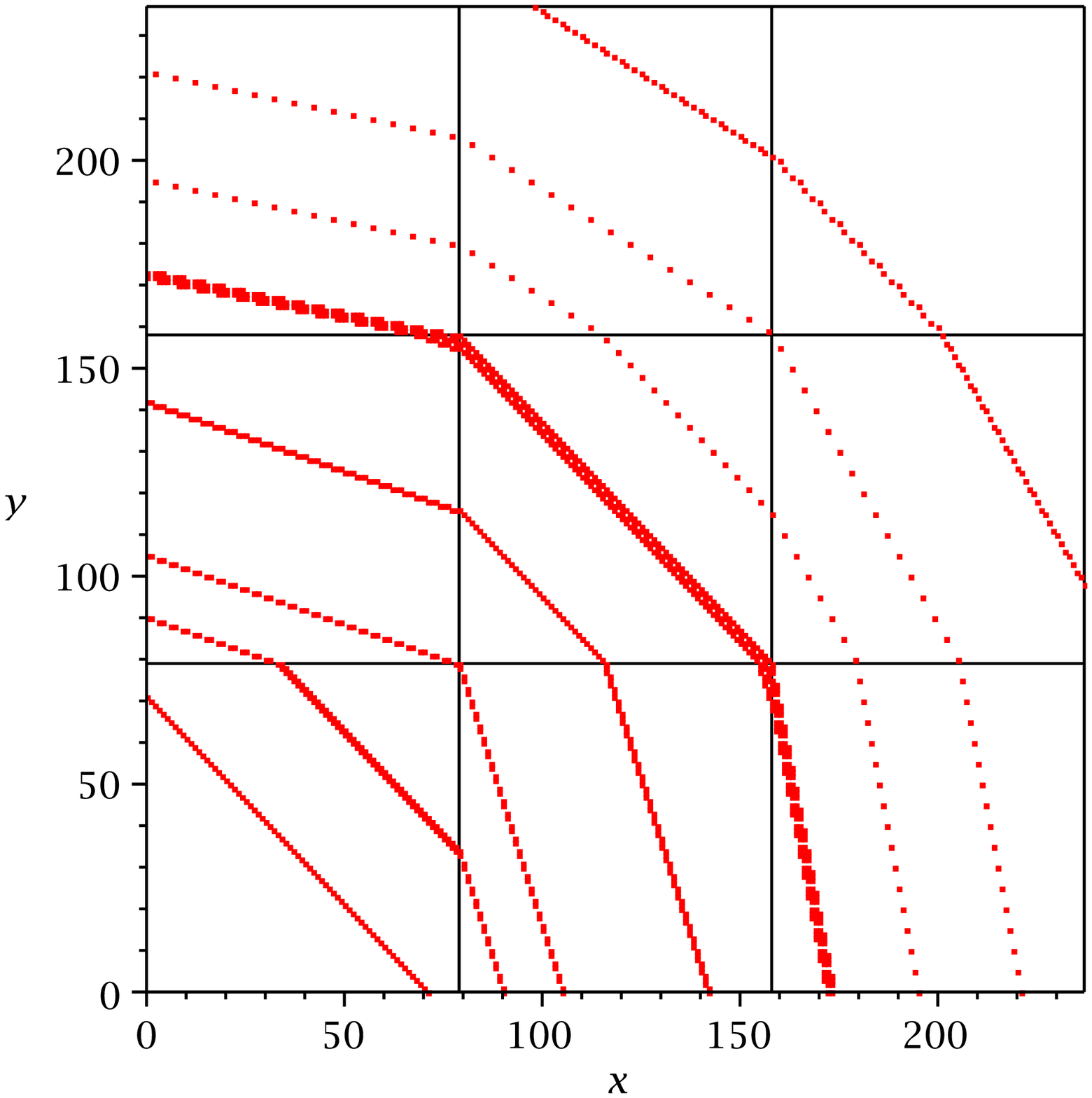,scale=0.35}
\hfil
\caption{\label{fig:PolygonalOrbits}\rm\small
Some periodic orbits of the map $F$, for small values of 
the parameter $\lambda$. 
Left: $\lambda=1/25$; right: $\lambda=1/79$.
The boxes have side $\lambda^{-1}$.
}
\end{figure}

More precisely, if $\lambda$ is small, then the orbits lie approximately on convex 
polygons; the smaller $\lambda$, the closer the approximation (figure \ref{fig:PolygonalOrbits}). The number of sides 
of these polygons increases with the distance from the origin; near the origin they
are squares, while at infinity they approach circles.

In the spirit of Takens' theorem \cite[section 6.2.2]{ArrowsmithPlace}, we 
introduce a piecewise smooth integrable Hamiltonian function 
(equation (\ref{eq:Hamiltonian})), whose invariant curves are polygons,
representing the limit foliation of the plane for the system (\ref{eq:Map}).
To match Hamiltonian flow and lattice map, we exploit the fact that, for small 
$\lambda$, the composite map $F^4$ is close to the identity. After scaling, it is 
possible to identify the action of $F^4$ with the unit time-advance map of the flow.
The two actions agree along the sides of the polygons, but they differ in vanishingly
small regions near the vertices. This discrepancy provides the perturbation mechanism. 

All integrable orbits must close after one revolution around the origin. By contrast, 
the lattice orbits need not do so, leading to a non-trivial clustering
of the periods around integer multiples of a basic rotational period ---see
figure \ref{fig:PeriodFunction}. 
\begin{figure}[!h]
        \centering
	\epsfig{file=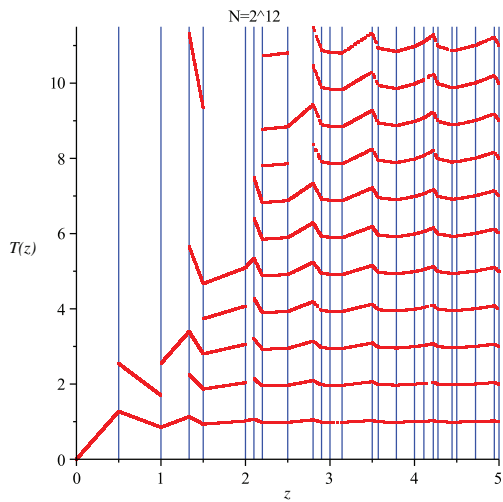,scale=0.35}
        \caption{The normalised period function $T_\lambda(z)$ for points 
         $z=(x,x)$, and $\lambda=2^{-12}$.
         The vertical lines mark the location of critical polygons, which 
         pass through lattice points.
         }
        \label{fig:PeriodFunction}
\end{figure}
The lowest branch of the period function comprises the {\it minimal orbits,} 
which shadow the unperturbed orbits, and close after one revolution. For these 
orbits, the effects of the perturbation cancel out. (The role of cancellation 
of singularities in the existence of invariant tori of non-smooth systems 
was noted long ago \cite{HenonWisdom}.)
These are the orbits of the integrable system that survive the perturbation.
The other orbits mimic the divided phase space structure of a near-integrable 
area-preserving map, in embryonic form near the origin, and increasing in 
complexity for larger amplitudes.

The set of invariant polygons is partitioned by \textit{critical polygons}, 
which contain $\Z^2$ points, into infinitely many \textit{polygon classes}, 
which can be characterised arithmetically in terms of sums of squares.
The main result of this work is that for infinitely many classes, a positive 
fraction of the minimal orbits having time-reversal symmetry survive. 
The restriction to infinitely many classes ---as opposed to all classes--- 
stems from a coprimality condition we impose in order to achieve convergence
of the density. This density turns out to be a rational number smaller than 
unity, which depends only on the family of polygons being considered. 
As the number of sides of the polygons increases to infinity, the density 
tends to zero. As a corollary, we obtain a positive rational lower bound 
for the density of all minimal orbits ---symmetric or not.
These results appear as theorems 
A 
and 
B, 
stated in section \ref{sec:MainTheorems} after a somewhat lengthy preparation.

Our analysis is based on the study of the first-return map to a thin strip placed
along the symmetry axis of the map (\ref{eq:Map}). The minimal orbits are fixed 
points of this map, and we study those having time reversal symmetry. 
The analysis of the return map requires tracking the return orbits, and this is done
through repeated applications of a \textit{strip map}, an acceleration device which 
exploits local integrability. This is a variant of a construct introduced for outer 
billiards of polygons (see \cite[chapter 7]{Schwartz}, and references therein), 
although in our case the strip map has an increasing number of components, 
providing a dynamics of increasing complexity. 
There is a symbolic dynamics associated with the strip map; its cylinder sets
in the return domain are congruence classes modulo a nested sequence of two-dimensional lattices.
The key result is that, within a polygon class, this lattice structure becomes 
independent of $\lambda$, provided that $\lambda$ is small enough. 
This fact gives a `non archimedean' character to the dynamics; 
the rationality of the density of minimal orbits, and their locally 
uniform distribution then follow.

The plan of this paper is the following.
In section \ref{sec:IntegrableLimit} we introduce the integrable Hamiltonian, 
characterise its invariant curves via a symbolic dynamics, and connect them to 
the arithmetical problem of sums of two squares (theorem \ref{thm:Polygons}).
In section \ref{sec:Recurrence} we switch on the round-off perturbation,
and show that all orbits recur to a small neighbourhood of the symmetry axis. 
Accordingly, we construct a return map of this neighbourhood, and show that
the return orbits shadow the integrable orbits (theorem \ref{thm:Hausdorff}).
Most proofs for this section are postponed to section \ref{sec:Proofs}.
Matching the symbolic dynamics of integrable and perturbed orbits is more 
delicate, requiring the exclusion of certain anomalous domains, and establishing 
that the size of these domains is negligible in the limit. 
This is done in section \ref{sec:MainTheorems}, where we also state the main 
results of this work, theorems 
A 
and 
B. 
The first theorem states that, for all sufficiently small $\lambda$, the return map 
commutes with translations 
by the elements of a two-dimensional lattice, which depends only on the polygonal 
class being considered.
The second theorem states that, if the symbolic dynamics of a polygonal class
satisfies certain coprimality conditions, then, as $\lambda\to 0$ the density 
of symmetric minimal orbits among all symmetric orbits converges to a positive
rational number, which is computed explicitly.
An immediate corollary of theorem B is the existence of a positive
rational lower bound for the density of minimal orbits ---symmetric or otherwise---
among all orbits (corollary \ref{cor:Density}).
At the end of section \ref{sec:MainTheorems} we also briefly discuss some experiments 
on the conditions of the statement of theorem B. 
In section \ref{sec:StripMap} we introduce the strip map, and establish
some of its properties (propositions \ref{thm:regularity} and \ref{thm:epsilon_j}).
In the final section we demonstrate the link between the symbolic dynamics 
of the perturbed orbits and the aforementioned group of lattice translations. 
This result leads to the conclusion of the proof of the main theorems. 

There are many issues we haven't considered.
The nature of the phase portrait at infinity, stability, the role played by 
non-symmetric orbits, the distribution of periods among the branches of the 
period function. More generally, one may consider the construct of strip maps 
for the purpose of developing a Hamiltonian perturbation theory over discrete 
spaces.

These questions deserve further investigation.

\bigskip\noindent
{\sc Acknowledgements.} We are grateful to J A G Roberts for engaging discussions, 
and to the referees, whose comments helped us improve the clarity and correctness of the paper.

\section{The integrable limit}\label{sec:IntegrableLimit}

Figure \ref{fig:PolygonalOrbits} suggests that the analysis of the limit 
$\lambda\to 0$ requires some scaling; equation (\ref{eq:Map}) suggests
that the quantity to be held constant should be $\lambda x$.
Accordingly, we normalise the metric by introducing 
the \textbf{scaled lattice map} $F_{\lambda}$, which is conjugate to $F$,
and acts on points $z=\lambda(x,y)$ of the scaled lattice $(\lambda\Z)^2$:
\begin{equation*}
F_{\lambda}: (\lambda\Z)^2 \rightarrow (\lambda\Z)^2 
 \hskip 40pt
 F_{\lambda}(z)=\lambda F(z/\lambda).
\end{equation*}
The discretisation length of $F_\lambda$ is $\lambda$.
Then we define the \textbf{discrete vector field}, which measures the 
deviation of $F_{\lambda}^4$ from the identity:
\begin{equation}\label{eq:v}
 \mathbf{v}: \; (\lambda\Z)^2 \rightarrow (\lambda\Z)^2
\hskip 40pt
\mathbf{v}(z) = F_{\lambda}^4(z)-z.
\end{equation}

To capture the main features of $\mathbf{v}$ on the scaled lattice, 
we introduce an \textbf{auxiliary vector field} $\mathbf{w}$ on the plane, 
given by
\begin{equation}\label{eq:w}
 \mathbf{w}: \; \R^2 \rightarrow (\lambda\Z)^2
\hskip 40pt
\mathbf{w}(x,y)=\lambda(2\lfloor y \rfloor+1,-(2\lfloor x \rfloor+1)).
\end{equation}
The field $\mathbf{w}$ is constant on every translated unit square 
(called a \textit{box})
\begin{equation}\label{eq:B_mn}
 B_{m,n} = \{ (x,y)\in\R^2 : \lfloor x \rfloor =m, 
   \lfloor y \rfloor = n \}, \quad m,n\in\Z
\end{equation}
and we denote the value of $\mathbf{w}$ on $B_{m,n}$ as
\begin{equation} \label{eq:w_mn}
 \mathbf{w}_{m,n}=\lambda(2n+1,-(2m+1)). 
\end{equation}

The following proposition, whose proof we defer until section 
\ref{sec:Proofs}, states that if we ignore a set of points of zero 
density, then the functions ${\bf v}$ and ${\bf w}$ agree on
the lattice $(\lambda\Z)^2$.

\begin{proposition} \label{prop:mu_1}
 Let $r$ be a positive real number. We define the set
\begin{equation}\label{eq:A}
 A(r,\lambda) = \{ z\in(\lambda\Z)^2 : \| z \|_{\infty} < r \},
\end{equation}
(with $\| (u,v) \|_{\infty} = \max (|u|,|v|)$), 
and the ratio
 \begin{displaymath}
  \mu_1(r,\lambda) = \frac{ \# \{z\in A(r,\lambda) : \mathbf{v}(z) = \mathbf{w}(z) \} }{\# A(r,\lambda)} .
 \end{displaymath}
Then we have
 \begin{displaymath}
  \lim_{\lambda\rightarrow 0} \mu_1(r,\lambda) = 1 .
 \end{displaymath}
\end{proposition}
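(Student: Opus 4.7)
The plan is to expand $F_\lambda^4(z) - z$ explicitly by iterating the map four times, identify the ``generic'' configuration of fractional parts under which the expansion reduces to $\mathbf{w}(z)$, and show that the remaining ``exceptional'' lattice points have vanishing density.

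Writing $z = (x, y)$ and iterating $F_\lambda(x, y) = (\lambda \lfloor x \rfloor - y, x)$ yields
\begin{equation*}
 F_\lambda^4(z) - z \;=\; (\alpha_4 - \alpha_2,\; \alpha_3 - \alpha_1),
\end{equation*}
where the nested quantities are
\begin{equation*}
 \alpha_1 = \lambda\lfloor x \rfloor,\quad
 \alpha_2 = \lambda\lfloor \alpha_1 - y \rfloor,\quad
 \alpha_3 = \lambda\lfloor \alpha_2 - x \rfloor,\quad
 \alpha_4 = \lambda\lfloor \alpha_3 - \alpha_1 + y \rfloor.
\end{equation*}
For $z \in A(r, \lambda)$, set $m = \lfloor x \rfloor$ and $n = \lfloor y \rfloor$; then $|m|,|n| \leq r$, and each floor argument is an integer-valued dominant part perturbed by a quantity of size $O(\lambda r)$. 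A direct case analysis then shows that, provided the fractional parts $\{x\}$ and $\{y\}$ avoid three specific subintervals of $[0, 1)$ of lengths at most $\lambda|m|$, $\lambda(|n|+1)$, and $\lambda(2|m|+1)$ respectively, the floors collapse to their ``intended'' values $\alpha_1 = \lambda m$, $\alpha_2 = -\lambda(n+1)$, $\alpha_3 = -\lambda(m+1)$, $\alpha_4 = \lambda n$. Substituting these yields $\mathbf{v}(z) = \lambda(2n+1, -(2m+1)) = \mathbf{w}(z)$.

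To conclude I would count the exceptional lattice points. Within each unit box $B_{m,n}$ meeting $[-r, r]^2$, the exceptional set is contained in a union of axis-aligned strips of total Lebesgue area $O(\lambda r)$; summing over the $O(r^2)$ boxes gives $O(\lambda r^3)$ for the total area. A crude geometric count shows that a strip of width $O(\lambda r)$ and length $1$ contains at most $O(r/\lambda)$ points of $(\lambda\Z)^2$, so the exceptional lattice points number $O(r^3/\lambda)$, against $\#A(r, \lambda) = \Theta(r^2/\lambda^2)$. Hence the bad fraction is $O(r\lambda)$, which vanishes as $\lambda \to 0$ for each fixed $r$, proving $\mu_1(r, \lambda) \to 1$.

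The main technical obstacle is the sign bookkeeping in the four nested floor evaluations: at each step, the location of the excluded subinterval of fractional parts depends on the sign of the perturbation (e.g.\ $\lambda m$ versus $-\lambda m$), so one must treat the cases $m \geq 0$ and $m < 0$ separately (and similarly for $n$) to verify uniformly that the exceptional interval has length bounded by the claimed quantity. Once this case analysis is carried out the density estimate is routine.
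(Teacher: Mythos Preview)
Your proposal is correct and follows essentially the same approach as the paper: both expand $F_\lambda^4(z)-z$ by iterating four times (your $\alpha_1,\alpha_2,\alpha_3,\alpha_4$ correspond, up to sign and a factor of $\lambda$, to the paper's box-label integers $m,\,a{+}1,\,b{+}1,\,c$ in the proof of lemma~\ref{thm:Lambda}), identify the generic floor configuration giving $\mathbf{v}=\mathbf{w}$, and bound the exceptional points by thin strips near the box boundaries to obtain a bad fraction $O(r\lambda)$. The only presentational difference is that the paper routes the estimate through the transition-point set $\Lambda$, since that set is reused throughout the subsequent analysis.
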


The asymptotic regime that results from replacing ${\bf v}$ by ${\bf w}$ will 
be referred to as the \textbf{integrable limit} of the system \eqref{eq:Map}, 
as $\lambda\to 0$. The points where the two vector fields differ have
the property that $\lambda x$ or $\lambda y$ is close to an integer. 
The perturbation of the integrable orbits will take place in these small domains.

\subsection{The integrable Hamiltonian}\label{sec:Hamiltonian}

We define the real function 
\begin{equation}\label{eq:P}
P:\R\to \R \hskip 40pt P(x)=\fl{x}^2+(2\fl{x}+1)\{x\}
\end{equation}
where $\{x\}$ denotes the fractional part of $x$.
The function $P$ is piecewise affine, and coincides with the function 
$x\mapsto x^2$ on the integers.
Thus:
\begin{equation}\label{eq:SqrtP}
 P(\lfloor x \rfloor) = \lfloor x \rfloor^2 \hskip 40pt \lfloor \sqrt{P(x)} \rfloor = \lfloor x \rfloor.
\end{equation}
Using this fact, we can invert $P$ up to sign by defining
\begin{equation}\label{eq:Pinv}
 P^{-1}: \R_{\geq 0} \rightarrow \R_{\geq 0} 
\hskip 40pt 
 P^{-1}(x) = \frac{x + \lfloor \sqrt{x} \rfloor (1+\lfloor \sqrt{x} 
\rfloor)}{2\lfloor \sqrt{x} \rfloor+1} ,
\end{equation}
so that $(P^{-1}\circ P)(x) = |x|$.

We define the following Hamiltonian
\begin{equation}\label{eq:Hamiltonian}
 \cP: \; \R^2 \; \rightarrow \R
 \hskip 40pt
 \cP(x,y) = P(x)+P(y).
\end{equation}
The function $\cP$ is continuous and piecewise affine. 
It is differentiable in $\R^2\setminus \Delta$, where $\Delta$ is
a set of orthogonal lines given by
\begin{equation}\label{eq:Delta}
\Delta=\{(x,y)\in\R^2\,:\,(x-\lfloor x\rfloor)(y-\lfloor y\rfloor)=0\}.
\end{equation}
The set $\Delta$ is the boundary of the boxes $B_{m,n}$, defined in (\ref{eq:B_mn}).
The associated (scaled) Hamiltonian vector field, defined for all points 
$(x,y)\in\R^2\setminus \Delta$, is parallel to the vector field 
$\mathbf{w}$ given in (\ref{eq:w}):
\begin{equation}\label{eq:HamiltonianVectorField}
 \lambda \left( \frac{ \partial\cP(x,y)}{\partial y}, 
   -\frac{\partial\cP(x,y)}{\partial x} \right) = \mathbf{w}(x,y)
 \hskip 40pt 
(x,y)\in \R^2\setminus \Delta.
\end{equation}
The parameter $\lambda$ merely rescales the time.

For a point $z\in\R^2$, we write $\Pi(z)$ for the level set of 
$\cP$ passing through $z$:
\begin{displaymath}
 \Pi(z) = \{ w\in\R^2 : \cP(w) = \cP(z) \}.
\end{displaymath}                                     
Below (theorem \ref{thm:Polygons}) we shall see that these sets are polygons.
The {\bf value} of a polygon $\Pi(z)$ is the real number $\cP(z)$, and if 
$\Pi(z)$ contains a lattice point, then we speak of a \textbf{critical polygon}.
The critical polygons form a distinguished subset of the plane:
\begin{equation*}
\Gamma:=\bigcup_{z\in\Z^2}\Pi(z).
\end{equation*}
All topological information concerning the Hamiltonian $\cP$ is
encoded in the partition of the plane generated by $\Gamma\cup\Delta$.
The elements of $\Gamma$ act as separatrices, whose vertices belong
to $\Delta$.

To characterise $\cP$ arithmetically, we consider the Hamiltonian 
\begin{displaymath}
 \cQ(x,y) = x^2 + y^2
\end{displaymath}
which represents the unperturbed rotations (no round-off) in the limit $\lambda\to 0$. 
Its level sets are circles, and the circles containing lattice points will be
called \textbf{critical circles}. 
By construction, the functions $\cP$ and $\cQ$ coincide over $\Z^2$, and 
hence the value of every critical polygon belongs to $\cQ(\Z^2)$, the set 
of non-negative integers which are representable as the sum of two squares. 
We denote this set by $\cE$.

A classical result, due to Fermat and Euler, states that
a natural number $n$ is a sum of two squares if and only if
any prime congruent to 3 modulo 4 which divides $n$ occurs 
with an even exponent in the prime factorisation of $n$  
\cite[theorem 366]{HardyWright}). 
We refer to $\cE$ as the set of \textbf{critical numbers},
and use the notation
\begin{displaymath}
 \cE = \{e_i: i\geq 0\} = \{0,1,2,4,5,8,9,10,13,16,17,\dots \} .
\end{displaymath}
There is an associated family of \textbf{critical intervals}, defined as
\begin{equation}\label{eq:EulerInterval}
 I^{e_i} = (e_i,e_{i+1}).
\end{equation}
Let us define
$$
\cE(x)=\#\{e\in\cE\,:\,e\leq x\}.
$$
The following result, due to Landau and Ramanujan, gives the asymptotic 
behaviour of $\cE(x)$ (see, e.g., \cite{MoreeKazaran})
\begin{equation}\label{eq:LandauRamanujan}
\lim_{x\to\infty}\frac{\sqrt{\ln x}}{x}\,\cE(x)=K,
\end{equation}
where $K$ is the Landau-Ramanujan constant
$$
K=\frac{1}{\sqrt{2}}\prod_{p\,\,\mathrm{prime}\atop {p\equiv 3 \; ({\rm mod} \; 4)}}
\left(1-\frac{1}{p^2}\right)^{-1/2}\,=\,0.764\ldots .
$$

Furthermore, let $r(n)$ be the number of representations of the integer $n$
as a sum of two squares. To compute $r(n)$, we first factor $n$ as follows
$$
n=2^a\prod p^b\prod q^c
$$
where $p$ and $q$ are primes congruent to 1 and 3 modulo 4, respectively.
(Each product is equal to 1 if there are no prime divisors of the corresponding type.)
Then we have \cite[theorem 278]{HardyWright}
\begin{equation}\label{eq:r}
r(n)=4\prod(b+1)\prod\left(\frac{1+(-1)^c}{2}\right).
\end{equation}
Note that this product is zero whenever $n$ is not a critical number, 
i.e., $r(n)=0$ if $n\notin\cE$.

We now have the following characterisation of the invariant curves of the Hamiltonian $\cP$.
\begin{theorem}\label{thm:Polygons}
The level sets $\Pi(z)$ of $\cP$ are convex polygons, invariant under 
the dihedral group $D_4$, generated by the two orientation-reversing involutions
\begin{equation}\label{eq:Dihedral}
 G:\quad (x,y) \mapsto (y,x) 
\hskip 40pt
 G':\quad (x,y) \mapsto (x,-y).
\end{equation}
The polygon $\Pi(z)$ is critical if and only if
$
 \cP(z)\in\cE.
$
The number of sides of $\Pi(z)$ is equal to
\begin{equation}\label{eq:NumberOfSides}
4(2\left\lfloor\sqrt{\cP(z)}\right\rfloor+1)-r(\cP(z))
\end{equation}
where the function $r$ is given in (\ref{eq:r}).
For every $e\in\cE$, the critical polygon with value $e$ intersects 
one and only one critical circle, namely that with the same value.
The intersection consists of $r(e)$ lattice points, and the polygon lies
inside the circle.
\end{theorem}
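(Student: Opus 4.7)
My plan is to prove the theorem's five assertions sequentially by an analysis of the one-variable building block $P$. For the \textbf{dihedral symmetry}, the key step is the identity $P(-x) = P(x)$ for every $x \in \R$: this is immediate for $x \in \Z$, and for $x \notin \Z$ it follows by substituting $\fl{-x} = -\fl{x} - 1$ and $\{-x\} = 1 - \{x\}$ into \eqref{eq:P} and expanding. Since $\cP(x, y) = P(x) + P(y)$ is trivially invariant under $(x, y) \mapsto (y, x)$, $\cP$ is invariant under both involutions of \eqref{eq:Dihedral}, hence under the dihedral group they generate.

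For the \textbf{polygonality} I would argue by convexity: on $[m, m+1]$, $P$ has slope $2m + 1$, and these slopes are strictly increasing in $m$, so $P$ is continuous, piecewise-affine, and convex on $\R$, and so is $\cP$. A direct calculation yields
\begin{equation*}
\cP(x, y) - \cQ(x, y) \;=\; \{x\}(1-\{x\}) + \{y\}(1-\{y\}) \;\in\; [0,\,1/2],
\end{equation*}
whence $\cQ \leq \cP \leq \cQ + 1/2$; in particular $\cP \to \infty$, so sublevel sets of $\cP$ are compact and meet only finitely many boxes $B_{m,n}$. Each sublevel set $\{\cP \leq c\}$ is therefore a compact convex set with piecewise-linear boundary, i.e., a convex polygon, whose boundary is $\Pi(z)$.

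The \textbf{critical-polygon} characterisation follows immediately because $\cP = \cQ$ on $\Z^2$, so $\cP(\Z^2) = \cE$ and $\Pi(z)$ meets $\Z^2$ iff $\cP(z) \in \cE$. The \textbf{polygon-circle intersection} is also a consequence of the sandwich above: the inequality $\cP \geq \cQ$ is strict off $\Z^2$, so $\Pi(z) \cap \{\cQ = \cP(z)\}$ consists exactly of the $r(\cP(z))$ lattice points of that value; the bound $\cP \geq \cQ$ places $\Pi(z)$ inside the corresponding disc; and any other critical circle meeting $\Pi(z)$ would share a lattice point with it and so have value $\cP(z)$, giving uniqueness.

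The \textbf{side-count formula} is the principal obstacle and requires a delicate combinatorial argument. By $D_4$-symmetry it suffices to count edges in the closed first quadrant and multiply by four. Invoking \eqref{eq:SqrtP}, $\fl{P^{-1}(c)} = \fl{\sqrt{c}} =: k$, so the polygon traverses this quadrant from $(0, P^{-1}(c))$ to $(P^{-1}(c), 0)$. Convexity, together with the strict rotation of the normal $(2m+1, 2n+1)$ as $(m, n)$ moves along the staircase of boxes visited, shows that each vertex in the quadrant is a crossing of an integer line $x = j$ or $y = j$ with $1 \leq j \leq k$; generically these $2k$ crossings are distinct and, with the axis endpoints, bound $2k + 1$ edges, giving a total of $4(2\fl{\sqrt{c}} + 1)$. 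When $c \in \cE$, each lattice point $(a, b) \in \Pi(z)$ is a simultaneous crossing of $x = a$ and $y = b$, fusing two prospective vertices into one and removing one edge; matching the $r(c)$ lattice points against the $D_4$-orbit structure — with particular care for those lying on the coordinate axes, which are shared between adjacent closed quadrants — yields the reduction to $4(2\fl{\sqrt{c}} + 1) - r(c)$. Making this quadrant-to-quadrant accounting consistent, especially at perfect-square values of $c$ where the axis lattice points have a distinguished role, is the main technical challenge of the proof.
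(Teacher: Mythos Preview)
Your proof is correct and in several places more economical than the paper's, though the balance tips the other way for the side count.

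For \textbf{convexity and polygonality}, you observe that $P$ has strictly increasing slopes and is therefore convex; then $\cP$, as a sum of convex functions, has compact convex sublevel sets with piecewise-linear boundary, and $\Pi(z)$ is a convex polygon. The paper instead argues locally: it first shows that the level set is a union of segments (one per box), then verifies convexity edge-by-edge in the first octant by checking the sign of $\det(\mathbf{w}_{m,n},\mathbf{w}_{m',n'})$ for each pair of adjacent boxes, and finally confirms that convexity is preserved across the symmetry lines $\Fix G$ and $\Fix G'$. Your route is shorter and avoids this case analysis entirely.

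For the \textbf{polygon--circle relationship}, your identity $\cP-\cQ=\{x\}(1-\{x\})+\{y\}(1-\{y\})\in[0,1/2]$ is essentially the computation the paper performs when it solves $\cP=e$, $\cQ=e+f$ and obtains a circle of radius $\sqrt{f+1/2}$ in the $(\{x\},\{y\})$ square. But you use the global inequality $\cP\geq\cQ$ to place the polygon inside the disc in one stroke, whereas the paper establishes this locally at each lattice-point vertex via another determinant computation. One remark: your sentence ``any other critical circle meeting $\Pi(z)$ would share a lattice point with it'' is not justified as written; the clean argument is simply that on $\Pi(z)$ one has $\cQ\in[e-1/2,e]$, and the only integer in that interval is $e$.

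For the \textbf{side count}, the paper's route is the simpler one. Rather than restricting to a quadrant and then gluing, it counts intersections of $\Pi(z)$ with the whole grid $\Delta$: there are $2\lfloor\sqrt{c}\rfloor+1$ vertical integer lines meeting the polygon and the same number of horizontal ones, each met twice, giving $4(2\lfloor\sqrt{c}\rfloor+1)$ vertices in the non-critical case; at each of the $r(c)$ lattice points two grid lines meet and two intersection points coalesce into one. This global count avoids the quadrant-boundary bookkeeping you flag as the main technical challenge. (That said, the subtlety you raise at perfect-square values---where axis lattice points behave differently from interior ones---is real, and the paper's one-line treatment of the critical case does not dwell on it either.)
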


\begin{proof}
The symmetry properties follow from the fact that the Hamiltonian $\cP$ is 
invariant under the interchange of its arguments, and the function $P$ is even:
\begin{align*} 
P(-x) &= \lfloor -x \rfloor^2 + \{-x\}(2\lfloor -x \rfloor+1) \\
 &= \left\{ \begin{array}{ll} (-\lfloor x \rfloor-1)^2 - (1-\{x\})(2\lfloor x \rfloor+1) & x\notin\Z\\ 
(-\lfloor x \rfloor)^2 & x\in\Z
\end{array} \right. \\
 &= \lfloor x \rfloor^2 + \{x\}(2\lfloor x \rfloor+1) = P(x).
\end{align*}

The vector field (\ref{eq:HamiltonianVectorField}) is piecewise-constant, 
and equal to $\mathbf{w}_{m,n}$ in the box $B_{m,n}$ (cf.~equations
(\ref{eq:B_mn}) and (\ref{eq:w_mn})). 
Hence a level set $\Pi(z)$ is the union of segments. It is easy to 
verify that no three segments can have an end-point in common 
(considering end-points in the first octant will suffice). 
Thus $\Pi(z)$ is a polygonal curve. Equally, segments cannot intersect
inside boxes, because they are parallel there. But a non self-intersecting 
symmetric polygonal must be a polygon. 

Next we prove convexity. Due to dihedral symmetry, if $\Pi(z)$ is convex 
within the open first octant $0<y<x$, then it is piecewise convex. 
Thus we suppose that $\Pi(z)$ has an edge in the box $B_{m,n}$, where 
$0< n\leq m$. The adjacent edge in the direction of the flow must be 
in one of the boxes
\begin{displaymath}
B_{m,n-1},\quad B_{m+1,n-1},\quad B_{m+1,n}.
\end{displaymath}
Using (\ref{eq:w_mn}) one verifies that the three determinants
\begin{displaymath}
\det(\mathbf{w}_{m,n},\mathbf{w}_{m,n-1})
\qquad
\det(\mathbf{w}_{m,n},\mathbf{w}_{m+1,n-1})
\qquad
\det(\mathbf{w}_{m,n},\mathbf{w}_{m+1,n})
\end{displaymath}
are negative. This means that, in each case, at the boundary between adjacent boxes, 
the integral curve turns clockwise. So $\Pi(z)$ is piecewise convex.
It remains to prove that convexity is preserved across the boundaries of the 
first octant, which belong to the fixed sets $\Fix{G}$ (the line $x=y$)
and $\Fix{G}'$ (the line $y=0$) of the involutions (\ref{eq:Dihedral}). 
Indeed, $\Pi(z)$ is either orthogonal to $\mathrm{Fix}\,G$ (in which case
convexity is clearly preserved), or has a vertex $(m,m)$ on it; in the latter case, 
the relevant determinant is $\det(\mathbf{w}_{m-1,m},\mathbf{w}_{m,m-1})=-8m<0$.
The preservation of convexity across $\Fix{G}'$ is proved similarly, and thus
$\Pi(z)$ is convex.

The statement on the criticality of $\cP(z)$ follows from 
the fact that, on $\Z^2$, we have $\cP=\cQ$.

Consider now the edges of $\Pi(z)$. The intersections of $\Pi(z)$ 
with the $x$-axis have abscissas $\pm P^{-1} (\cP(z))$. 
Using (\ref{eq:SqrtP}) we have that there are 
$2\lfloor\sqrt{\cP(z)}\rfloor+1$ integer points between them, hence as 
many lines orthogonal to the $x$-axis with integer abscissa. 
The same holds for the $y$-axis. If $\Pi(z)$ is non-critical, 
it follows that $\Pi(z)$ intersects $\Delta$ in exactly 
$4(2\left\lfloor\sqrt{\cP(z)}\right\rfloor+1)$ points, 
each line being intersected twice. 
Because the vector field changes across each line, 
the polygon has $4(2\left\lfloor\sqrt{\cP(z)}\right\rfloor+1)$ vertices. 
If the polygon is critical, then we have $\cP(z)=e\in\cE$. At each 
of the $r(e)$ vertices that belong to $\Z^2$, two lines in $\Delta$ 
intersect, resulting in one fewer vertex. So $r(e)$ vertices must be removed 
from the count.

Next we deal with intersections of critical curves.
Let us consider two arbitrary critical curves
$$
\cP(x,y)=e
\qquad
\cQ(x,y)=e+f
\qquad
e, e+f\in\cE.
$$
This system of equations yields
$$
\{x\}^2 + \{y\}^2 -\{x\} -\{y\}=f
$$
which is a circle with centre at $(1/2,1/2)$, and radius $\rho$, where
\begin{equation}\label{eq:rho}
\rho^2=f+\frac{1}{2}.
\end{equation}
Since we must have $0\leq\{x\},\{y\}<1$, we find $\rho^2\leq 1/2$, and
since $f$ is an integer, we obtain $\{x\}=\{y\}=f=0$. So critical
polygons and circles intersect only if they have the same value, 
and their intersection consists of lattice points. Then the 
number of these lattice points is necessarily equal to $r(e)$.

Finally, let the lattice point $(m,n)$ belong to the intersection 
of two critical curves, and let $\mathbf{w}'_{m,n}$ be the
vector field of the Hamiltonian $\cQ$ at that point.
Without loss of generality, we assume that $(m,n)$ lies within 
the first octant. If $n>0$, then the vector field of $\cP$
before and after the vertex in the direction of the flow
is equal to $\mathbf{w}_{m-1,n}$ and $\mathbf{w}_{m,n-1}$, 
respectively. One verifies that the determinants
\begin{displaymath}
\det(\mathbf{w}_{m-1,n},\mathbf{w}_{m,n}')
\qquad
\det(\mathbf{w}_{m,n}',\mathbf{w}_{m,n-1})
\end{displaymath}
are negative. This means that, near this vertex, the polygon lies 
inside the circle. If $n=0$, the field after the vertex is 
$\mathbf{w}_{m-1,-1}$, and the same result holds.

The proof is complete.
\end{proof}

From this theorem it follows that the set $\Gamma$ of critical polygons
partitions the plane into concentric domains, which we call \textbf{polygon classes}. 
Each domain contains a single critical circle, and has no lattice points in its interior.
The values of all the polygons in a class is a critical interval (\ref{eq:EulerInterval}).
There is a dual arrangement for critical circles. 
Because counting critical polygons is the same as counting critical circles,
the number of critical polygons (or, equivalently, of polygon classes) contained 
in a circle of radius $\sqrt{x}$ is equal to $\cE(x)$, with asymptotic 
formula (\ref{eq:LandauRamanujan}).
From equation (\ref{eq:rho}), one can show that the total variation 
$\Delta\cQ(a)$ of $\cQ$ along the polygon $\cP(z)=a$ satisfies the bound
$$
\Delta\cQ(a)\leq \frac{1}{2}
$$
which is strict (e.g., for $a=1$). 

%


\subsection{Symbolic dynamics of polygon classes} \label{sec:Coding}

In theorem \ref{thm:Polygons} we classified the invariant curves of 
the Hamiltonian $\cP$ in terms of critical numbers. We found that the
set $\Gamma$ of critical polygons partitions the plane into concentric annular 
domains ---the polygon classes. 
In this section we define a symbolic dynamics on the set of classes, 
which specifies the common itinerary of all orbits in a class,
taken with respect to the lattice $\Z^2$. 

Suppose that the polygon $\Pi(z)$ is non-critical. Then all vertices of 
$\Pi(z)$ belong to $\Delta\setminus \Z^2$, where $\Delta$ was 
defined in (\ref{eq:Delta}). Let $\xi$ be a vertex.
Then $\xi$ has one integer and one non-integer coordinate, and we let $u$
be the value of the non-integer coordinate. 
We say that the vertex $\xi$ is of \textbf{type} $v$ if $\lfloor |u| \rfloor =v$.
Then we write $v_j$ for the type of the $j$th vertex, where the vertices of $\Pi(z)$ 
are enumerated according to their position in the plane, 
starting from the positive half of symmetry line $\Fix{G}$ and proceeding clockwise.

As the type of a vertex is defined using the modulus of the non-integer coordinate, 
the sequence of vertex types $v_j$ reflects the eight-fold symmetry of $\Pi(z)$. 
Hence if the $k$th vertex lies on the $x$-axis, then there are $2k-1$ vertices 
belonging to each quarter-turn, and the vertex types satisfy
\begin{equation} \label{eq:v_symmetry}
 v_j = v_{2k-j} = v_{(2k-1)i+j}, \hskip 20pt 
1\leq j \leq k, \quad 0\leq i \leq 3. 
\end{equation}
Thus it suffices to consider the vertices in the first octant, and 
the \textbf{vertex list} of $\Pi(z)$ is the sequence of vertex types
\begin{displaymath}
V=(v_1,\dots,v_{k}).
\end{displaymath}
We note that the vertex list can be decomposed into two disjoint subsequences; 
those entries belonging to a vertex with integer $x$-coordinate and those 
belonging to a vertex with integer $y$-coordinate. 
These subsequences are non-decreasing and non-increasing, respectively.

From theorem \ref{thm:Polygons}, it follows that for 
every $e\in\cE$, the set of polygons $\Pi(z)$ with
$
 \cP(z)\in I^e
$
have the same vertex list.
Let $k$ be the number of entries in the vertex list. Since the polygon
$\Pi(z)$ is non-critical, equation (\ref{eq:NumberOfSides}) 
gives us that 
$4( 2\lfloor \sqrt{e} \rfloor +1) = 4(2k-1)$, and hence
\begin{equation*}
k=\#V=\lfloor \sqrt{e}\rfloor +1.
\end{equation*}
Any two polygons with the same vertex list have not only the same number of edges, 
but intersect the same collection of boxes, and have the same collection of tangent 
vectors. The critical polygons which intersect the lattice $\Z^2$, where 
the vertex list is multiply defined, form the boundaries between classes. 
The symbolic dynamics of these polygons is ambiguous, but this item will not
be required in our analysis.

Thus the vertex list is a function on classes, hence on $\cE$. 
For example, the polygon class identified with the interval $I^9=(9,10)$ 
(see figure \ref{fig:V(9)}) has vertex list
\begin{displaymath}
V(9)=( 2, 2, 0, 3).
\end{displaymath}
See figure \ref{fig:V_table} for further examples of V(e). 
For each class, there are two vertex types which we can calculate explicitly; 
the first and the last. If $\cP(z)\in I^e$, and the polygon $\Pi(z)$ intersects 
the symmetry line $\Fix{G}$ at some point $(x,x)\in\R^2$, then by the definition 
(\ref{eq:Hamiltonian}) of the Hamiltonian $\cP$
 $$ \cP(z) = 2 P(x). $$
Thus inverting $P$ and using (\ref{eq:SqrtP}), it is straightforward to show that
the first vertex type is given by
\begin{equation}\label{eq:v1}
 v_1 = \lfloor |x| \rfloor = \lfloor P^{-1}(\cP(z)/2) \rfloor = \lfloor \sqrt{e/2} \rfloor. 
\end{equation}
Similarly the last vertex type, corresponding to the vertex on the $x$-axis is given by
\begin{equation} \label{eq:vk}
 v_k = \lfloor P^{-1}(\cP(z)) \rfloor = \lfloor \sqrt{e} \rfloor. 
\end{equation}

\begin{figure}[!h]
        \centering
        \vspace*{-100pt} 
	\epsfig{file=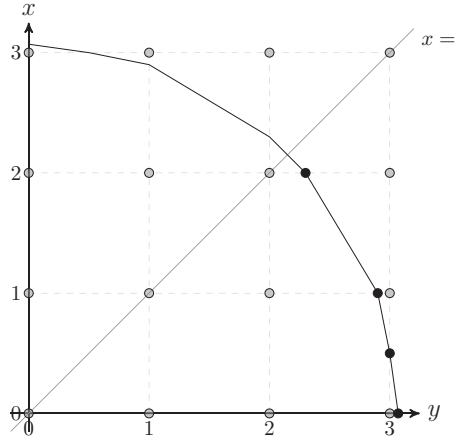,scale=0.8}
        \vspace*{-370pt} 
        \caption{A polygon with $\cP(z)$ in the interval $(9,10)$ and its vertices in the first octant.}
        \label{fig:V(9)}
\end{figure}

\begin{figure}[!h]
        \centering
                \begin{tabular}{|l|l|}
                \hline
                e & V(e) \\
                \hline
                9 & $ (2,2,0,3) $\\
                10 & $(2,1,3,3)$ \\
                18 &  $(3,3,1,4,4)$ \\
                29 & $(3,4,2,5,5,5)$ \\
                49 & $(4,5,3,6,6,6,0,7)$ \\
                52 & $(5,4,6,6,6,1,7,7)$ \\
                \hline
                \end{tabular}
        \caption{A table showing the vertex list $V(e)$ for a selection of 
 critical numbers $e$. Notice that the last entry in the vertex list is 
 always $\lfloor \sqrt{e}\rfloor$, and the
 number of entries in the list is $k=\lfloor \sqrt{e}\rfloor+1$.}
        \label{fig:V_table}
\end{figure}

\section{Recurrence and return map}\label{sec:Recurrence}

(The proofs of all statements in this section are deferred until section 
\ref{sec:Proofs}.)

The lattice map $F$ has fewer symmetries than the Hamiltonian $\cP$,
but it is easy to verify that $F$ is still reversible, being conjugate 
to its inverse via the involution $G$ given in equation (\ref{eq:Dihedral}):
\begin{equation*}
F^{-1}=G\circ F\circ G\hskip 40pt G^2=\mathrm{Id}.
\end{equation*}
The scaled map $F_\lambda$ has the same property, and all orbits of 
$F_\lambda$ return repeatedly to a neighbourhood of the symmetry line 
$\Fix{G}$. In this section we consider the return map associated to 
this recurrence, and identify some asymptotic properties of the first-return
orbits.

From equation (\ref{eq:Map}), the rotation number $\nu$ has the asymptotic form
\begin{displaymath}
 \nu = \frac{1}{2\pi}\arccos\left(\frac{\lambda}{2}\right) 
= \frac{1}{4}- \frac{\lambda}{4\pi} + O(\lambda^3) \hskip 40pt \lambda\rightarrow 0.
\end{displaymath}
The integer $t=4$ is the \textbf{zeroth-order recurrence time} of orbits under 
$F_{\lambda}$, that is, the number of iterations needed for a point to return 
to an $O(\lambda)$-neighbourhood of its starting point. It turns out (see proof of 
proposition \ref{prop:mu_1}) that the field ${\bf v}(z)$ (equation (\ref{eq:v})) is non-zero 
for all non-zero points $z$, so no orbit has period four.
Accordingly, for small $\lambda>0$, we define the \textbf{first-order recurrence time} 
$t^*$ of the rotation to be the next time of closest approach:
\begin{equation}\label{eq:tstar}
 t^*(\lambda) = \inf \left\{k\in\N \,: \,d_H(k\nu,\N) < d_H(4\nu,\N)\right\}
              = \frac{\pi}{\lambda} + O(1)
\hskip 40pt \lambda\to 0,
\end{equation}
where $d_H$ is the Hausdorff distance, and the expression $d_H(x,A)$, with $x\in\R$, 
is to be understood as the Hausdorff distance between the sets $\{x\}$ and $A$. 
(Throughout this paper we use $\N$ to denote the set of positive integers.)

The integer $t^*$ provides a natural recurrence timescale for $F_{\lambda}$. 
Let $T(z)$ be the minimal period of the orbit under $F$ of the point $z\in\Z^2$ 
, so that $T(z/\lambda)$ is the corresponding function for 
points $z\in(\lambda\Z)^2$ under $F_{\lambda}$.
(In accordance with the periodicity conjecture, we assume that this 
function is well-defined.) 
Since, as $\lambda\to 0$, the recurrence time $t^*$ diverges,
the periods of the orbits will cluster around integer multiples 
of $t^*$, giving rise to branches of the period function. 
The lowest branch corresponds to orbits that perform a single revolution 
around the origin, and their period is approximately equal to $t^*$.
The period function $T$ has a normalised counterpart, given by (cf.~(\ref{eq:tstar}))
\begin{equation*}
T_{\lambda}: (\lambda\Z)^2 \rightarrow \frac{\lambda}{\pi}\,\N
 \hskip 40pt
 T_\lambda(z)=\frac{\lambda}{\pi}T(z/\lambda).
\end{equation*}
The values of $T_\lambda$ oscillate about the integers (figure \ref{fig:PeriodFunction}).

We construct a Poincar\'e return map $\Phi$ on a neighbourhood of the 
positive half of the symmetry line $\Fix{G}$. 
Let $d(z)$ be the perpendicular distance between a point $z$ and 
$\Fix{G}$:
\begin{equation*}
 d(z) = d_H(z,\Fix{G}).
\end{equation*}

We define the domain $X$ of the return map $\Phi$ to be the set of points 
$z\in(\lambda\Z_{\geq0})^2$ which are closer to $\Fix{G}$ than their 
pre-images under ${F}_\lambda^4$, and at least as close as their images:
\begin{equation}\label{eq:X}
 X = \{z\in(\lambda\Z_{\geq0})^2: d(z) \leq d(F_{\lambda}^4(z)), \; d(z) < d(F_{\lambda}^{-4}(z)) \}.
\end{equation}
Asymptotically, for every non-negative integer $m$, the set $X$ has non-empty intersection
with the boxes $B_{m,m},\,B_{m+1,m},\,B_{m,m+1}$. The main component is in $B_{m,m}$, a 
thin strip of width $\|\mathbf{w}_{m,m}\|$ lying parallel to the symmetry line $\Fix{G}$ (figure \ref{fig:X}).
To ensure that this component is non-empty, we require $\lambda<\lambda_m$, 
where the critical parameter $\lambda_m$ is given by
\begin{equation}\label{eq:lambda_m}
 \lambda_m=\frac{1}{6(m+1)}.
\end{equation}

\begin{figure}[!h]
        \centering
\epsfig{file=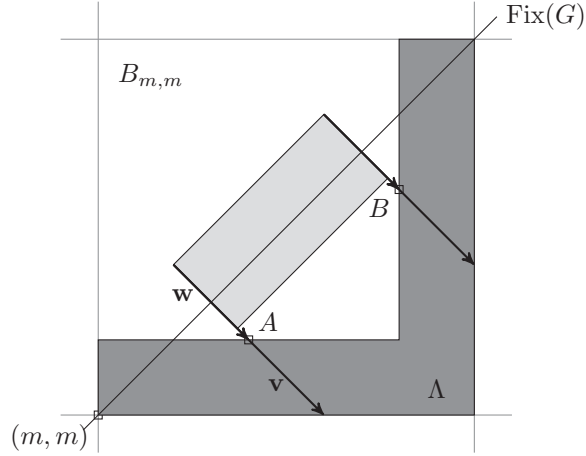,scale=1.0}
        \caption{Structure of the box $B_{m,m}$. The the strip containing the main component 
        of $X$ is shown in light grey; the intersection of $B_{m,m}$ with the set $\Lambda$ 
        of equation (\ref{eq:Lambda}) is shown in dark grey. 
        The set $X$ comprises the lattice points which are closer to $\Fix{G}$ than 
        their pre-images under translation by $\mathbf{v}$, and at least as close as 
        their images. In $B_{m,m}\setminus\Lambda$ we have $\mathbf{v}=\mathbf{w}_{m,m}$  
        (lemma \ref{thm:Lambda}), so that the width of the strip is $\|\mathbf{w}_{m,m}\|$, 
        and the points $A$ and $B$ lie on the line $y=x - 2m\lambda$. 
        If $A\in\Lambda$, then the $y$-coordinate of $A$ cannot be larger than $m+\lambda(2m+3)$,
        and if $B\in\Lambda$, then the $x$-coordinate of $B$ cannot be smaller 
        than $m+1 - \lambda(2m+3)$ (see proof of lemma \ref{thm:Lambda}). 
        Thus the strip is non-empty as long as $m+\lambda(2m+3)<m+1-\lambda(2m+3) - 2m\lambda$, 
        giving the critical parameter $\lambda_m$ of equation (\ref{eq:lambda_m}).}
        \label{fig:X}
\end{figure}

The transit time $\tau$ to the set $X$ is well-defined for all $z\in(\lambda\Z)^2$: 
\begin{equation} \label{eq:tau}
 \tau : (\lambda\Z)^2 \rightarrow \N
 \hskip 40pt \tau(z) = \min \{ k\in\N :F_{\lambda}^k(z) \in X \}.
\end{equation}
Thus the first return map $\Phi$ is the function
\begin{displaymath}
  \Phi : X \rightarrow X
 \hskip 40pt
   \Phi(z) = F_{\lambda}^{\tau(z)}(z).
\end{displaymath}
We refer to the orbit of $z\in X$ up to the return time $\tau(z)$ as the 
\textbf{return orbit} of $z$:
\begin{equation*}
 {\mathcal O}_{\tau}(z) = \{ F_{\lambda}^k(z)\,:\,0\leq k \leq \tau(z) \}, \hskip 20pt z\in X.
\end{equation*}
We let $\tau_{-}$ be the transit time to $X$ under $F_{\lambda}^{-1}$: 
\begin{displaymath}
 \tau_{-} : (\lambda\Z)^2 \rightarrow \Z_{\geq0}
 \hskip 40pt
  \tau_{-}(z) = \min \{ k\in\Z_{\geq0} :F_{\lambda}^{-k}(z) \in X \},
\end{displaymath}
so that the return orbit for a general $z\in(\lambda\Z)^2$ is given by: 
\begin{equation*}
 {\mathcal O}_{\tau}(z) = \{ F_{\lambda}^k(z)\,:\, \tau_{-}(z)\leq k \leq \tau(z) \}, \hskip 20pt z\in (\lambda\Z)^2.
\end{equation*}

To associate a unique return orbit to an integrable orbit, we define the rescaled 
round-off function $R_{\lambda}$, which rounds points on the plane down to 
the next lattice point:
\begin{equation*}
R_{\lambda}: \R^2 \rightarrow (\lambda\Z)^2 
 \hskip 40pt
 R_{\lambda}(w)=\lambda R(w/\lambda),
\end{equation*}
where $R$ is the integer round-off function
\begin{equation*}
 R: \R^2 \rightarrow \Z^2 
 \hskip 40pt
 R(u,v) = (\lfloor u \rfloor, \lfloor v \rfloor ).
\end{equation*}
For every point $w\in\R^2$ and every $\delta>0$, the set of points
$$
\{z\in(\lambda\Z)^2: \, z= R_{\lambda}(w), \,0< \lambda<\delta\}
$$
that represent $w$ on the lattice as $\lambda\to0$ is countably infinite.
The corresponding set of points on $\Z^2$, before rescaling, is unbounded.

In the rest of this section, we state some metric properties of the
return orbits, and then show that the return orbits shadow the integrable orbits. 
The proofs will be found in section \ref{sec:Proofs}. 

According to proposition \ref{prop:mu_1} on page \pageref{prop:mu_1}, the points of the scaled lattice 
$(\lambda\Z)^2$ at which integrable and discrete vector fields have 
different values are rare, as a proportion of lattice points. The following 
result shows that these points are also rare within each return orbit. 

\begin{proposition} \label{prop:mu_2}
 For any $w\in\R^2$, if we define the ratio
 \begin{displaymath}
  \mu_2(w,\lambda) = \frac{\#\{z\in{\mathcal O}_{\tau}(R_{\lambda}(w))\,:\,
   \mathbf{v}(z)=\mathbf{w}(z)\}}{\# {\mathcal O}_{\tau}(R_{\lambda}(w))} ,
 \end{displaymath}
 then we have
 \begin{displaymath}
  \lim_{\lambda\rightarrow 0} \mu_2(w,\lambda) = 1 .
 \end{displaymath}
\end{proposition}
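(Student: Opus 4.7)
The plan is to prove the estimate $\mu_2(w,\lambda) = 1 - O(\lambda)$ by a box-by-box counting argument along the return orbit, exploiting the fact that inside each box $B_{m,n}$ the discrete field $\mathbf{v}$ agrees with the constant auxiliary field $\mathbf{w}_{m,n}$ outside a thin neighbourhood of the box boundary.

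First I would use the shadowing property (Theorem \ref{thm:Hausdorff}) to confine $\mathcal{O}_\tau(R_\lambda(w))$ to a bounded tube around the integrable polygon $\Pi(w)$. In particular, there exists $r = r(w)$, independent of $\lambda$, such that for all sufficiently small $\lambda$ the return orbit lies in the ball $A(r,\lambda)$ and traverses only those boxes $B_{m,n}$ crossed by $\Pi(w)$. By Theorem \ref{thm:Polygons}, the number of such boxes is bounded by $4(2\lfloor\sqrt{\cP(w)}\rfloor+1)$, hence by a constant depending only on $w$.

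Second, I would analyse the dynamics within a single traversed box. From the proof of Proposition \ref{prop:mu_1}, the bad set
\[
 \Lambda = \{z\in(\lambda\Z)^2:\mathbf{v}(z)\neq\mathbf{w}(z)\}
\]
is confined to strips of width $O(\lambda)$ parallel to the lines of $\Delta$. Outside $\Lambda$, the map $F_\lambda^4$ restricted to $B_{m,n}$ is exact translation by $\mathbf{w}_{m,n}$, whose Euclidean norm is $\Theta(\lambda)$ because $\|z\|\leq r$. Since $B_{m,n}$ has unit side, each single traversal of a box along $\Pi(w)$ therefore contributes $\Theta(1/\lambda)$ orbit points, and the portion of the traversal lying within distance $O(\lambda)$ of the box boundary contains only $O(1)$ lattice points. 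Hence each box contributes $O(1)$ bad points.

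Third, I would aggregate: the total number of orbit points is $\Theta(1/\lambda)$ while the total number of bad points is $O(1)$, both with constants depending only on $w$. This gives
\[
 \mu_2(w,\lambda) \;\geq\; 1 - C(w)\,\lambda,
\]
which, combined with $\mu_2 \leq 1$, yields the claimed limit.

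The main obstacle is the shadowing step. A priori, the $O(1)$ perturbation events per box crossing could accumulate over the $\Theta(1/\lambda)$ iterations comprising one revolution, allowing the return orbit to drift off $\Pi(w)$, to enter boxes that the integrable polygon avoids, or to re-visit boxes. Showing that this does not happen --- i.e. that the perturbations introduced at box boundaries cancel sufficiently to keep the orbit within an $O(\lambda)$-neighbourhood of $\Pi(w)$ --- is precisely the content of Theorem \ref{thm:Hausdorff}, on which this proof depends; without it, the local counting above would not control the orbit globally.
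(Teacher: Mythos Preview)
Your local counting is correct, but the proof is circular: in the paper's logical order, Theorem~\ref{thm:Hausdorff} is deduced \emph{from} the proof of Proposition~\ref{prop:mu_2}, not the other way round. The confinement of $\mathcal{O}_\tau(R_\lambda(w))$ to a bounded tube around $\Pi(w)$ is precisely the hard part of the proposition, and outsourcing it to Theorem~\ref{thm:Hausdorff} leaves nothing proved. You yourself flag this as ``the main obstacle''; it is not an obstacle you can delegate.

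The paper handles the confinement directly by tracking the Hamiltonian $\cP$ along the orbit. One shows that $\cP$ is invariant under $F_\lambda^4$ away from the transition set $\Lambda$, and that at each transition point the jump in $\cP$ is at most $O(\lambda)$ (via a gradient bound $|\cP(z+\mathbf{v}(z))-\cP(z)|\le\lambda(2\|z\|+3\sqrt{2})^2$). A bootstrap then closes the argument: as long as the orbit stays inside a slightly enlarged polygon $\Pi(w)^+$, it can meet $\Lambda$ at most once per box, hence at most $m=4(2\lfloor\sqrt{\cP(w)+\epsilon}\rfloor+1)$ times per revolution, so the total $\cP$-variation is $O(m\lambda)$ and the orbit indeed stays inside $\Pi(w)^+$. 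With confinement established, your box-by-box tally ($\Theta(1/\lambda)$ points total, $O(1)$ transition points per box, boundedly many boxes) gives $\mu_2=1-O(\lambda)$, matching the paper. A minor point: $\Lambda$ in the paper is the set of transition points, not the set where $\mathbf{v}\neq\mathbf{w}$; Lemma~\ref{thm:Lambda} gives only the inclusion $\{\mathbf{v}\neq\mathbf{w}\}\subset\Lambda\cup\{0\}$.
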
 

To establish propositions \ref{prop:mu_1} and \ref{prop:mu_2},
we seek to isolate the lattice points $z\in(\lambda\Z)^2$ where 
the discrete vector field $\mathbf{v}(z)$ deviates from the integrable 
vector field $\mathbf{w}(z)$. We say that a point $z\in(\lambda\Z)^2$ is a 
\textbf{transition point} if $z$ and its image under $F_\lambda^4$
do not belong to the same box, namely if
\begin{displaymath}
 R(F_{\lambda}^4(z))\not=R(z).
\end{displaymath}
Let $\Lambda$ be the set of transition points. Then
\begin{equation} \label{eq:Lambda}
 \Lambda = \bigcup_{m,n\in\Z} \Lambda_{m,n},
\end{equation}
where
\begin{displaymath}
 \Lambda_{m,n} =  F_{\lambda}^{-4}(B_{m,n}\cap(\lambda\Z)^2)\setminus B_{m,n}.
\end{displaymath}

\begin{figure}[!h]
        \centering
\epsfig{file=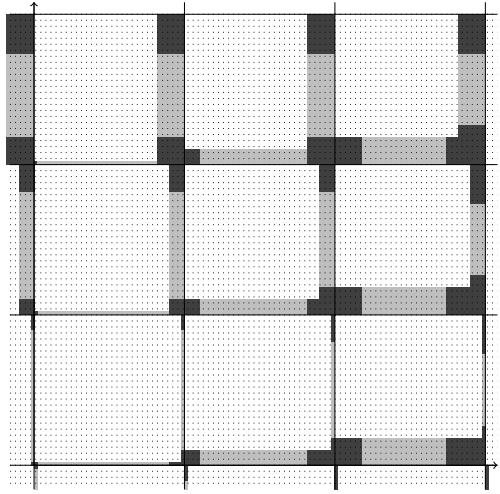,scale=0.8}
        \caption{Structure of phase space. The boxes $B_{m,n}$, bounded by the
         set $\Delta$, include regular domains (white), where the motion is integrable. 
         The darker regions comprise the set $\Lambda$ of transition points, given in (\ref{eq:Lambda}). 
         The darkest domains are the components $\Sigma_{m,n}$ of the set $\Sigma\subset \Lambda$, 
         defined in (\ref{eq:Sigma}).}
        \label{fig:LambdaSigma_plot}
\end{figure}

For small $\lambda$, the set of transition points $\Lambda$ consists of thin 
strips of lattice points arranged along the lines $\Delta$ (see figure 
\ref{fig:LambdaSigma_plot}). The following key lemma states that, for sufficiently 
small $\lambda$, all points $z\not=(0,0)$ where $\mathbf{v}(z)\neq\mathbf{w}(z)$ 
are transition points. 

\begin{lemma} \label{thm:Lambda}
Let $A(r,\lambda)$ be as in equation (\ref{eq:A}).
Then for all $r>0$ there exists $\lambda^*>0$ such that, for all $\lambda<\lambda^*$ 
and $z\in A(r,\lambda)$, we have
\begin{displaymath}
 \mathbf{v}(z)\neq \mathbf{w}(z) \; \Rightarrow \; z \in \Lambda \cup \{(0,0)\}.
\end{displaymath}
\end{lemma}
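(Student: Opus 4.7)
The plan is to show directly that if $z\neq(0,0)$ and $z\notin\Lambda$, then the four-step orbit $z\mapsto F_\lambda^4(z)$ visits a canonical sequence of boxes and the resulting displacement $\mathbf{v}(z)$ equals $\mathbf{w}(z)$. Unrolling $F_\lambda(u,v)=(\lambda\lfloor u\rfloor-v,\,u)$, I write $z_k=F_\lambda^k(z)$ for $k=0,\dots,4$ and set $M_k=\lfloor z_k^{(1)}\rfloor$; a short computation gives the identity
\[
F_\lambda^4(z)-z \;=\; \lambda\bigl(M_3-M_1,\;M_2-m\bigr)\qquad (z\in B_{m,n}).
\]
Comparing with $\mathbf{w}(z)=\lambda(2n+1,-(2m+1))$, one sees that $\mathbf{v}(z)=\mathbf{w}(z)$ exactly when the triple $(M_1,M_2,M_3)=(-n-1,-m-1,n)$, i.e.\ the floor values one obtains by iterating the unperturbed rotation $(u,v)\mapsto(-v,u)$ on $B_{m,n}$.

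Next, for $\lambda$ small enough in terms of $r$ (concretely $\lambda(2r+1)<1$), a direct estimate shows that for any $z\in A(r,\lambda)\cap B_{m,n}$ each $M_k$ differs from its expected value by at most one unit, with the direction of the deviation dictated by the signs of $m$ and $n$. Moreover, each possible deviation occurs only when $z$ lies in a prescribed thin strip of width $O(\lambda)$ along one edge of $B_{m,n}$; these strips are precisely the components of $\Lambda_{m,n}$ depicted in figure \ref{fig:X}. Consequently only finitely many non-generic configurations of $(M_1,M_2,M_3)$ need to be considered.

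The core of the proof is then a case check. In each non-generic configuration I substitute the deviated floors into the identity above to compute $\mathbf{v}(z)$, form $z_4=z+\mathbf{v}(z)$, and verify that one of its coordinates crosses the corresponding edge of $B_{m,n}$—so that $R(z_4)\neq R(z)$ and $z\in\Lambda$, as required. For instance, a deviation $M_1\mapsto -n$ (which forces $v\in[n,n+\lambda m]$) also forces $M_3\mapsto n-1$, giving $\mathbf{v}(z)=\lambda(2n-1,-(2m+1))$; since $v-\lambda(2m+1)<n$ in this regime, $z_4$ exits $B_{m,n}$ through the lower edge. The origin is an unavoidable exception: $F_\lambda(0)=0$ gives $\mathbf{v}(0)=0\neq\lambda(1,-1)=\mathbf{w}(0)$, while $0\notin\Lambda$ trivially.

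I expect the main obstacle to be the bookkeeping, since the admissible deviations depend on the signs of $m$ and $n$, producing several sub-cases per octant and requiring one to verify separately that each anomalous strip actually contains lattice points only of the transition type. The burden is lightened by the $D_4$-invariance of $\mathbf{w}$ and by the reversibility $F^{-1}=GFG$, which pairs forward anomalies in one octant with backward anomalies in another; after one octant is handled in full, the remaining cases reduce to a symmetry argument or a brief repetition of the same computation.
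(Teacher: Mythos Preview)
Your proposal is correct and follows essentially the same route as the paper, which also unrolls $F_\lambda^4$ and tracks the intermediate box labels (its integers $a,b,c,d$ are precisely $-M_1-1,\,-M_2-1,\,M_3$ and $\lfloor z_4^{(1)}\rfloor$ in your notation). The paper's argument is organised as the contrapositive---it assumes $z\notin\Lambda$, i.e., $c=n$ and $d=m$, and then shows by direct inequality manipulation that $a=n$ and $b=m$ unless $z=0$---thereby bypassing the octant-by-octant case enumeration you anticipate.
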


We conclude this section by formulating a shadowing theorem, which states that
for time scales corresponding to a first return to the domain $X$, every 
integrable orbit has a scaled return orbit that shadows it. 
Furthermore, this scaled orbit of the round-off map 
converges to the integrable orbit in the Hausdorff metric as $\lambda\to 0$.

\begin{theorem} \label{thm:Hausdorff}
For any $w\in\R^2$, let $\Pi(w)$ be the orbit of $\cP$, and let
${\mathcal O}_{\tau}(R_{\lambda}(w))$ be the return orbit at the rounded
lattice point. Then
\begin{displaymath}
 \lim_{\lambda\rightarrow 0} d_H 
   \left(\Pi(w),{\mathcal O}_{\tau}(R_{\lambda}(w))\right)=0,
\end{displaymath}
where $d_H$ is the Hausdorff distance on $\R^2$.
\end{theorem}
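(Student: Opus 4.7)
The plan is to reduce, via continuity of $\cP$, to the shadowing of $\Pi(R_\lambda(w))$ by the return orbit, and then to estimate the deviation of the discrete orbit from this polygon using the fact that the integrable flow is exactly replicated by $F_\lambda^4$ at regular lattice points.

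First, since $\|w - R_\lambda(w)\|_\infty < \lambda$ and $\cP$ is continuous, $\cP(R_\lambda(w)) \to \cP(w)$. The level sets $\Pi$ of $\cP$ form a continuous family of convex polygons parameterised by the value $c = \cP(z)$, with only the combinatorial type changing across critical values in $\cE$. Hence $d_H(\Pi(w), \Pi(R_\lambda(w))) \to 0$, and it suffices to show that
\[
d_H\bigl(\Pi(R_\lambda(w)), \mathcal{O}_\tau(R_\lambda(w))\bigr) \to 0.
\]

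Set $z_0 = R_\lambda(w)$ and $z_k = F_\lambda^{4k}(z_0)$. The key observation is that, by (\ref{eq:HamiltonianVectorField}), on each box $B_{m,n}$ the Hamiltonian vector field of $\cP$ is the constant $\mathbf{w}_{m,n}$, so the time-one flow map is simply the translation $z \mapsto z + \mathbf{w}_{m,n}$. Whenever $z_k$ is regular (i.e.\ not a transition point), lemma \ref{thm:Lambda} gives $\mathbf{v}(z_k) = \mathbf{w}(z_k)$, and therefore $z_{k+1} = z_k + \mathbf{w}(z_k)$ coincides exactly with the time-one image of $z_k$ under the flow; in particular $\cP(z_{k+1}) = \cP(z_k)$ and $z_{k+1}$ lies on the same polygonal edge as $z_k$. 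Consequently, within each box the consecutive regular iterates trace the corresponding edge of $\Pi(z_0)$ at lattice spacing $\|\mathbf{w}_{m,n}\| = O(\lambda)$.

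Deviation from $\Pi(z_0)$ can arise only at transition iterates, where $z_k$ and $z_{k+1}$ lie in different boxes. Each such transition corresponds to a vertex of $\Pi(z_0)$, so by (\ref{eq:NumberOfSides}) their number is bounded by $4(2\lfloor\sqrt{\cP(w)}\rfloor+1)$, a quantity that is $O(1)$ as $\lambda \to 0$ (for $w$ in a fixed bounded set). At any such transition, both $\mathbf{v}(z_k)$ and $\mathbf{w}(z_k)$ have length $O(\lambda)$, so $\cP$ is altered by at most $O(\lambda)$ (using that $\cP$ is Lipschitz on bounded sets). Summing over the $O(1)$ transitions in one return, every iterate $z_k$ lies on a level set of $\cP$ within $O(\lambda)$ of $\cP(z_0)$, and hence at Euclidean distance $O(\lambda)$ from $\Pi(z_0)$. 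Conversely, each point of $\Pi(z_0)$ is within $\|\mathbf{w}\| = O(\lambda)$ of some iterate $z_k$. Combining the two bounds yields $d_H(\mathcal{O}_\tau(z_0), \Pi(z_0)) = O(\lambda) \to 0$, which completes the proof.

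The main obstacle I anticipate lies not in the local shadowing estimate above but in verifying that the return time $\tau$ is large enough for the orbit to trace out the full polygon rather than a proper sub-arc of it: one must rule out a spurious early hit of the return strip $X$ caused by a transition near $\Fix G$. This should follow from the construction of $X$ in (\ref{eq:X}) together with the geometric picture of figure \ref{fig:X}, since the strip width is precisely $\|\mathbf{w}_{m,m}\|$, matching the step length of the iteration; but the bookkeeping near the symmetry line, and the treatment of the first and last steps of the return orbit, will need some care.
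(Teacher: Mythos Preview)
Your strategy is the same as the paper's: control the variation of $\cP$ along the discrete orbit by counting transition points and bounding the error incurred at each one. There is, however, a circularity you gloss over. You write that each transition ``corresponds to a vertex of $\Pi(z_0)$'', so that their number is bounded by (\ref{eq:NumberOfSides}); but the discrete orbit visits the same boxes as $\Pi(z_0)$ only if it already stays close to $\Pi(z_0)$, which is exactly what you are trying to prove. A priori nothing prevents the orbit from drifting outward, visiting extra boxes, accumulating extra transitions, and drifting further.

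The paper breaks this loop with a bootstrap (carried out in the proof of proposition~\ref{prop:mu_2}, on which the proof of theorem~\ref{thm:Hausdorff} then simply leans). Fix $0<\epsilon<1$ and let $m$ be the number of sides of the outer polygon $\Pi(w)^+=\{\cP=\cP(w)+\epsilon\}$. One first proves the pointwise estimate $|\cP(F_\lambda^4(\xi))-\cP(\xi)|\leq C\lambda$ on any bounded region, so that after at most $m$ transitions the total change in $\cP$ is below $\epsilon$ once $\lambda$ is small enough. Then argue inductively along the orbit: while the orbit lies between $\Pi(w)^-$ and $\Pi(w)^+$ it can meet at most $m$ boxes per revolution, hence at most $m$ transitions; but with at most $m$ transitions the $\cP$-variation stays below $\epsilon$, so the orbit never leaves the annulus. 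This closes the circle and, incidentally, also disposes of the obstacle you flag in your last paragraph: once confined to the annulus the orbit must wind fully around the origin before it can re-enter $X$, and since consecutive iterates are $O(\lambda)$ apart, every point of $\Pi(z_0)$ lies within $O(\lambda)$ of some $z_k$.
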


This result justifies the term `integrable limit' assigned to the flow $\cP$.

\section{Proofs for section \ref{sec:Recurrence}}\label{sec:Proofs}

\noindent{\sc Proof of lemma \ref{thm:Lambda}}. \quad
Let $r>0$ be given, and let $A(r,\lambda)$ be as in equation (\ref{eq:A}).
We show that if $\lambda$ and $\|z\|_\infty$ are sufficiently small (and $z\not=0$), 
then
\begin{displaymath}
\mathbf{v}(z)\neq \mathbf{w}(z) 
\quad \Rightarrow \quad 
 R(F_{\lambda}^4(z))\not=R(z).
\end{displaymath}

Since $z\in A(r,\lambda)$, we have $z\in B_{m,n}$ for some $|m|,|n|\leq\lceil r \rceil$, 
where $\lceil \cdot \rceil$ is the ceiling function, defined by the 
identity $\lceil x \rceil=-\lfloor -x \rfloor$. 
Through repeated applications of $F_{\lambda}$, we have
\begin{equation}\label{eq:Fabcd}
\begin{array}{llll}
 F_{\lambda}(z)   &= \lambda(-y+m,x)              &R(F_\lambda(z))&=(-(a+1),m),\\
 \noalign{\vspace*{2pt}}
 F^2_{\lambda}(z) &= \lambda(-x-a-1,-y+m)         &R(F_\lambda^2(z))&=(-(b+1),-(a+1)),\\
 \noalign{\vspace*{2pt}}
 F^3_{\lambda}(z) &= \lambda(y-m-b-1,-x-a-1)      &R(F_\lambda^3(z))&=(c,-(b+1)), \\
 \noalign{\vspace*{2pt}}
 F^4_{\lambda}(z) &= \lambda(x+a+c+1,y-m-b-1)     &R(F_\lambda^4(z))&=(d,c),
\end{array}
\end{equation}
where the integers $a,b,c,d$ are given by
\begin{equation}\label{eq:abcd}
\begin{array}{rl}
  a+1 &= \lceil \lambda (y-m) \rceil,\\
 \noalign{\vspace*{2pt}}
  b+1 &= \lceil \lambda (x+a+1) \rceil,  \\
 \noalign{\vspace*{2pt}}
  c &= \lfloor \lambda (y-m-b-1) \rfloor \\
 \noalign{\vspace*{2pt}}
  d &= \lfloor \lambda (x+a+c+1) \rfloor.
\end{array}
\end{equation}
The integers $a$, $b$, $c$ and $d$ label the boxes in which each iterate 
occurs (figure \ref{fig:F_Iterates}),
and also give an explicit expression for the round-off term 
$\lfloor \lambda x \rfloor$ at each step.
Thus reading from the last of these equations, the discrete vector field $\mathbf{v}$ is given by
\begin{equation} \label{eq:v_abcd}
\mathbf{v}(z)= F^4_{\lambda}(z) - z = 
\lambda( a+c+1, -(m+b+1)),
\end{equation}
and $z$ is a transition point whenever at least one of the equalities $d=m$ and $c=n$ on the box labels fails.

\begin{figure}[!h]
        \centering
\vspace*{-100pt}
\epsfig{file=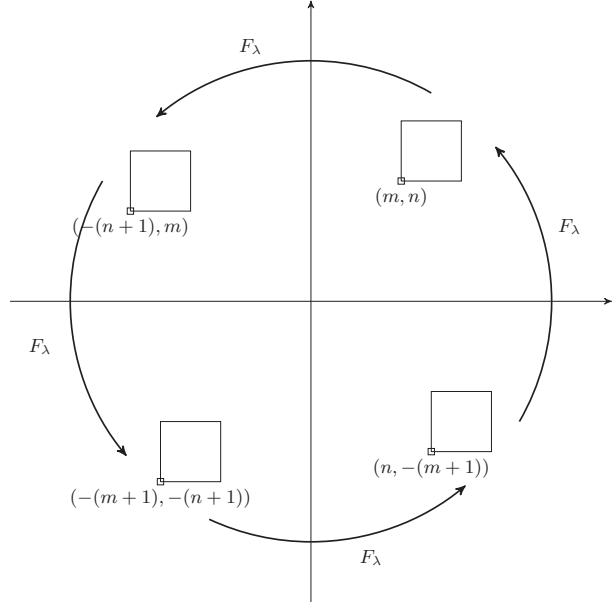,scale=0.8}
\vspace*{-320pt}
        \caption{The boxes in which successive iterates of $z$ under 
$F_\lambda$ occur, for the typical case where $m=b=d$, $n=a=c$.}
        \label{fig:F_Iterates}
\end{figure}

If the integers $m$, $a$, $b$, $c$ are sufficiently small relative to the 
number of lattice points per unit length, i.e., if
\begin{equation} \label{eq:abc_ineq}
\max (|m|,|a+1|,|m+b+1|,|a+c+1|)<1/\lambda, 
\end{equation}
then the map $F^4_{\lambda}$ moves the point $z$ at most one box in each of the $x$ and $y$ directions, 
so that the labels $a$, $b$, $c$ and $d$ satisfy
\begin{equation} \label{eq:bd_ac_sets}
b,d\in\{m-1,m,m+1\}, \hskip 20pt a,c\in\{n-1,n,n+1\}. 
\end{equation}
Similarly, (\ref{eq:abc_ineq}) dictates that the discrepancy between each of the pairs $(b,d)$, $(a,c)$ cannot be too large:
\begin{equation} \label{eq:bd_ac_ineq}
 |b-d|, |a-c| \leq 1.
\end{equation}

Letting $\lambda^* = 1/(2\lceil r \rceil +3)$, we obtain 
\begin{align*}
\max (|m|,|a+1|,|m+b+1|,|a+c+1|) 
        &\leq \max (|m|+|b+1|,|a+1|+|c|) \\
	&\leq \max (2|m|+|b-m|,2|n|+|a-n|+|c-n|)+1 \\
	&\leq 2\lceil r \rceil+3 \leq 1/\lambda^*,
\end{align*}
so that (\ref{eq:bd_ac_sets}) and (\ref{eq:bd_ac_ineq}) hold for all $\lambda<\lambda^*$. 
Then the expression (\ref{eq:v_abcd}) for $\mathbf{v}$, combined with the inequality (\ref{eq:bd_ac_ineq}),
gives that $\mathbf{v}(z)= \mathbf{w}(z)$ if and only if
\begin{equation} \label{eq:v=w}
m=b \hskip 40pt n = a = c. 
\end{equation}

Suppose now that $z$ is not a transition point, so that $c=n$ and $d=m$, 
but that $\mathbf{v}(z)\neq \mathbf{w}(z)$, so that at least one of the 
equalities (\ref{eq:v=w}) fails. 
If $a\neq n$, straightforward manipulation of inequalities 
shows that the only combination of values which satisfies (\ref{eq:bd_ac_sets}) is
\begin{displaymath}
 a=n-1, \, m=0, \, b=-1, \, \lambda y = n.
\end{displaymath}
In turn, using also the inequality (\ref{eq:bd_ac_ineq}), $b\neq m$ implies that 
\begin{displaymath}
 b=m-1, \, n=0, \, a=-1, \, \lambda x = m.
\end{displaymath}
Combining these, we have $m=a+1=b+1=c=0$, 
which corresponds to the unique point $z=(0,0)$.
\endproof

We now use this result to prove propositions \ref{prop:mu_1} and \ref{prop:mu_2}, 
given in sections \ref{sec:IntegrableLimit} and \ref{sec:Recurrence}, respectively.

\medskip

\noindent{\sc Proof of proposition \ref{prop:mu_1}}. \quad
From equation (\ref{eq:A}), we have that the number of lattice points 
in the set $A(r,\lambda)$ is given by 
\begin{displaymath}
  \# A(r,\lambda) = \left( 2 \Big\lceil \frac{r}{\lambda} \Big\rceil -1\right)^2.
\end{displaymath}
By lemma \ref{thm:Lambda}, for sufficiently small $\lambda$, 
every non-zero point $z\in A(r,\lambda)$ 
satisfying $\mathbf{v}(z)\neq \mathbf{w}(z)$ is a transition point, so has 
$z\in \Lambda_{m,n}$ for some $m,n\in\Z$ with $|m|,|n|\leq \lceil r \rceil+1$. 
Furthermore, every set $\Lambda_{m,n}$ is composed of two strips,
each of unit length, and width approximately equal to $\lambda(2m+1)$ 
and $\lambda(2n+1)$, respectively.
We can bound the number of lattice points in the set $\Lambda_{m,n}$ explicitly by
\begin{displaymath}
  \# \Lambda_{m,n} \leqslant \frac{|2m+1|+|2n+1|+c}{\lambda},
\end{displaymath}
for some positive constant $c$, independent of $m$ and $n$. (Indeed $c=3$ is sufficient -- 
cf.~the methods used in the proof of proposition \ref{prop:Xe_tilde}.)
It follows that for fixed $r>0$, as $\lambda\to 0$ we have the estimate
\begin{align*}
  \mu_1(r,\lambda) &= 1 - \frac{ \# \{z\in A(r,\lambda) : 
    \mathbf{v}(z) \neq \mathbf{w}(z) \} }{\# A(r,\lambda)} \\
 &\geq 1 - \frac{ \# \left(A(r,\lambda) \cap \Lambda \right)}{\# A(r,\lambda)} \\
 &\geq 1 - \frac{1}{\# A(r,\lambda)} \sum_{|m|,|n|\leq \lceil r \rceil+1} \#  \Lambda_{m,n} \\
 &\geq 1 - \left( 2 \Big\lceil \frac{r}{\lambda} \Big\rceil -1\right)^{-2} \sum_{|m|,|n|\leq \lceil r \rceil+1} 
          \frac{|2m+1|+|2n+1|+c}{\lambda} \\
 &= 1 - O(\lambda).
\end{align*}
Since $\mu_1(r,\lambda)\leq 1$, the proof is complete.
\endproof

\medskip

By construction, the piecewise-constant vector field $\mathbf{w}$ is 
parallel to the Hamiltonian vector field associated 
with $\cP$ (cf.~equation (\ref{eq:HamiltonianVectorField})). It follows that 
$\cP$ is invariant under translation by $\mathbf{w}$ everywhere except across 
the discontinuities of $\mathbf{w}$, i.e., except at transition points, 
which occur near the vertices of integrable orbits.

In the above proof of proposition \ref{prop:mu_1}, we show that transition points $z\in\Lambda$ 
have zero density in the limit $\lambda\rightarrow 0$ in the following sense:
\begin{displaymath}
 \forall r>0: \hskip 20pt \frac{\# \left(A(r,\lambda) \cap \Lambda \right)}{\# A(r,\lambda)} \rightarrow 0,
\end{displaymath}
so that in any bounded region, and for sufficiently small $\lambda$, 
$\cP$ is invariant under translation by $\mathbf{w}$ almost everywhere. 
In turn, lemma \ref{thm:Lambda} implies that $\cP$ is invariant under translation 
by $\mathbf{v}$, and hence under iterates of $F_{\lambda}^4$, almost everywhere. 
Again the exceptions to this rule occur near the vertices of integrable orbits. 

To prove proposition \ref{prop:mu_2} and theorem \ref{thm:Hausdorff}, 
we bound the variation in the Hamiltonian function $\cP$ along perturbed 
orbits ${\mathcal O}_{\tau}(z)$. Since we know $\cP$ is invariant under 
$F_{\lambda}^4$ at all points $z\notin\Lambda$, it remains to consider 
the change in $\cP$ under $F_{\lambda}$, and under $F_{\lambda}^4$ at 
points $z\in\Lambda$.

\medskip

\noindent{\sc Proof of proposition \ref{prop:mu_2}}. \quad

Let $r>0$. Using lemma \ref{thm:Lambda}, we observe that for sufficiently small $\lambda$, 
$\cP$ is invariant under $F_{\lambda}^4$ on $A(r,\lambda)\setminus \Lambda$. 
We begin by bounding the change in $\cP$ under $F_{\lambda}^4$ on the set $A(r,\lambda)\cap\Lambda$.

For any $z,v\in\R^2$ we have:
\begin{align*}
 \left| \cP(z+v)- \cP(z) \right| &= \left| \; \int_{[z,z+v]} \nabla\cP(\xi)\cdot d\mathbf{\xi} \; \right| \\
&\leq \max_{\xi\in[z,z+v]} \left( \|\nabla\cP(\xi)\| \right) \, \|v\|,
\end{align*}
where $[z,z+v]$ denotes the line segment joining the points $z$ and $z+v$, $d\mathbf{\xi}$ is 
the line element tangent to this segment, and $\nabla\cP$ is the gradient of $\cP$, given by:
 $$ \nabla\cP(x,y) = (2\lfloor x \rfloor +1,2\lfloor y \rfloor +1) \hskip 40pt 
(x,y)\in \R^2\setminus \Delta. $$
If $z=\lambda(x,y)$ and $v=\mathbf{v}(z)$ is the discrete vector field, then for sufficiently small $\lambda$, 
equations (\ref{eq:abcd}) and (\ref{eq:bd_ac_sets}) can be combined to give:
\begin{align*}
 \| \mathbf{v}(z) \| &\leq \lambda \sqrt{(|2\lfloor \lambda y \rfloor +1| +2)^2 + (|2\lfloor \lambda x \rfloor +1| +1)^2} \\
 &\leq \lambda \sqrt{(2 |\lambda y| +3)^2 + (2 |\lambda x| +2)^2} \\
 &\leq \lambda (2\|z \| + \sqrt{13}).
\end{align*}
This inequality ensures that the length of the line segment $[z,z+v]$ goes to zero with $\lambda$, 
so that for sufficiently small $\lambda$, the piecewise-constant form of the gradient $\nabla\cP$ gives:
\begin{align*}
 \max_{\xi\in[z,z+v]} (\|\nabla\cP(\xi)\|) 
 &\leq \sqrt{(|2\lfloor \lambda x \rfloor +1| +2)^2 + (|2\lfloor \lambda y \rfloor +1| +2)^2} \\
  &\leq \sqrt{(2 |\lambda x| +3)^2 + (2 |\lambda y| +3)^2} \\
 &\leq 2\|z \| + 3\sqrt{2}.
\end{align*}
Substituting these into the first inequality, we have that for sufficiently small 
$\lambda$: 
 $$ \left| \cP(F_{\lambda}^4(z))- \cP(z) \right| = \left|\cP(z+\mathbf{v}(z))- \cP(z) \right| 
 \leq \lambda (2\|z \| + 3\sqrt{2})^2. $$
For the change in $\cP$ under $F_{\lambda}$, if $z=\lambda(x,y)$, then by the same sort of analysis we have:
\begin{align*}
 \left| \cP(F_{\lambda}(z))- \cP(z) \right|  &= \left| P(\lambda(\lfloor\lambda x\rfloor -y)) - P(\lambda y) \right|\\
&= \left| P(\lambda(y-\lfloor\lambda x\rfloor)) - P(\lambda y) \right| \\
&\leq \lambda |\lfloor\lambda x\rfloor| \, (|P^{\prime}(\lambda y)|+2) \\
&\leq \lambda |\lfloor\lambda x\rfloor| \, (|2\lfloor \lambda y \rfloor +1| +2) \\
&\leq \lambda (|\lambda x| +1) \, (2|\lambda y| +3) \\
&\leq 2 \lambda (\|z\| +2) ^2,
\end{align*}
where $P$ is the piecewise-affine function defined in equation (\ref{eq:P}). 
(See page \pageref{thm:Polygons} for the proof that $P$ is even.) 
It follows that for any orbit contained in $A(r,\lambda)$, if $k\in\Z_{\geq0}$ and $0\leq l<4$:
 $$ \left| \cP(F_{\lambda}^{4k+l}(z)) - \cP(z) \right| 
 \leq m \lambda (2\|z \| + 3\sqrt{2})^2 + 2l \lambda (\|z\| +2) ^2 $$
where $m$ is the number of transition points in the orbit of $z$ under $F_{\lambda}^4$:
 $$ m = \# \left( \{ z, F_{\lambda}^4(z), \dots, F_{\lambda}^{4k}(z) \} \cap \Lambda \right). $$
Similar expressions hold in backwards time, for iterates of $F_{\lambda}^{-4}$ and $F_{\lambda}^{-1}$. 
For fixed $\lambda$, this estimate bounds the perturbed orbit of a point $z\in(\lambda\Z)^2$ to a polygonal annulus 
around the polygon $\Pi(z)$, which grows in thickness as the number of transition points in the orbit increases. 
Conversely, by taking a sufficiently small value of $\lambda$, we can ensure that the perturbed orbit of $z$ 
stays arbitrarily close to the polygon $\Pi(z)$ for any finite number of transition points. 
Specifically, given any $\epsilon>0$ and $m\in\N$, there is a critical value of $\lambda$ below 
which any point $F_{\lambda}^{4k+l}(z)$ along the perturbed orbit satisfies 
 $$ \left| \cP(F_{\lambda}^{4k+l}(z)) - \cP(z) \right| < \epsilon, $$
 as long as the number of transition points encountered is not more than $m$:
  $$ \# \left( \{ z, F_{\lambda}^4(z), \dots, F_{\lambda}^{4k}(z) \} \cap \Lambda \right) \leq m. $$

Now let $w\in \R^2$ be given, and let $z=R_{\lambda}(w)$, so that ${\mathcal O}_{\tau}(z)$ 
is the return orbit which shadows the integrable orbit $\Pi(w)$. 
Furthermore let $r^{\pm}=P^{-1}(\cP(w))\pm1$, where $P^{-1}(\cP(w))$ is the 
abscissa of the intersection of $\Pi(w)$ with the positive $x$-axis. 
Since $\| z-w \|<\sqrt{2}\lambda$, using the same bounds as above, we have:
\begin{align*}
 \left| \cP(z)- \cP(w) \right| &\leq \max_{\xi\in[w,z]} (\|\nabla\cP(\xi)\|) \, \|z-w\| \\
&\leq \sqrt{2}\lambda(2\|w \| + 3\sqrt{2}),
\end{align*}
so that for small $\lambda$, the polygons $\Pi(z)$ and $\Pi(w)$ are close.

Pick $0<\epsilon<1$ and consider the polygons $\Pi(w)^{\pm}$ given by
 $$ \Pi(w)^{\pm} = \{ \xi : \; \cP(\xi) = \cP(w)\pm\epsilon \}.  $$
Without loss of generality, we may assume that neither of these polygons 
is critical. Thus each of these integrable orbits intersects as many boxes 
as it has sides. For the larger polygon $\Pi(w)^+$, the number of sides 
(see theorem \ref{thm:Polygons}) is given by
 $$ 4\left(2\Big\lfloor \sqrt{\cP(w)+ \epsilon} \Big\rfloor+1 \right), $$
and we let this number be $m$. 
Furthermore, we note that all lattice points in the interior of $\Pi(w)^+$ are 
elements of the set $A(r^+,\lambda)$, since:
 $$ \max \{ \, \|\xi\|_{\infty} : \; \xi\in\Pi(w)^{+} \, \} = P^{-1}(\cP(w)+\epsilon) < r^+. $$

Then, as we showed above, for sufficiently small $\lambda$, 
the perturbed orbit ${\mathcal O}_{\tau}(z)$ is bounded between 
$\Pi(w)^{\pm}$ as long as the number of transition points 
encountered does not exceed $m$. 
By construction (cf.~equation (\ref{eq:Lambda})), the return orbit ${\mathcal O}_{\tau}(z)$ 
contains at most one transition point for every box it intersects. However, as long as the perturbed orbit 
is bounded above by the integrable orbit $\Pi(w)^{+}$, it cannot intersect more boxes than this  
integrable orbit in any one revolution around the origin. 
Hence the perturbed orbit cannot intersect more than $m$ boxes in any one revolution, 
and its number of transition points is bounded above as follows:
 $$ \# \left({\mathcal O}_{\tau}(z)\,\cap\,\Lambda \right) \leq m. $$

Now we consider the total number of points in the return orbit ${\mathcal O}_{\tau}(z)$. 
Since the perturbed orbit is bounded below by the integrable orbit $\Pi(w)^{-}$, it must contain a point $\xi$,  
close to the positive $x$-axis, with $x$-coordinate not less than:
 $$ P^{-1}(\cP(w) - \epsilon) > r^-. $$
Similarly it must contain a point $\xi$, close to the negative $x$-axis, 
with $x$-coordinate not greater than $-r^-$.
The return orbit moves between these points via the action of $F_{\lambda}^4$, 
i.e., by translations of the vector field $\mathbf{v}$. 
Hence, for sufficiently small $\lambda$, the number of steps required for the orbit 
to move from one point to the other in, say, the upper half-plane, is bounded below by 
the distance $2r^-$ divided by the maximal length of $\mathbf{v}$ along the orbit:
\begin{displaymath}
 \{ \lambda(x,y)\in {\mathcal O}_{\tau}(z) : y> 0 \} 
  \geq \frac{2r^-}{\lambda (2\|z \| + \sqrt{13})}
  \geq \frac{2r^-}{\lambda (2\|w \| + 4)}.
\end{displaymath}

Thus, as $\lambda\rightarrow 0$, we have the estimate
 \begin{align*}
  \mu_2(w,\lambda) &= 1 - \frac{ \# \{\xi\in  {\mathcal O}_{\tau}(z) : 
   \mathbf{v}(\xi) \neq \mathbf{w}(\xi) \} }{\# {\mathcal O}_{\tau}(z)}, \\
 &\geq 1 - \frac{ \# \left({\mathcal O}_{\tau}(z)\,\cap\,\Lambda \right) }{\# {\mathcal O}_{\tau}(z)} , \\
 &\geq 1 - \lambda \; \frac{ m (2\|w \| + 4)}{4r^-}, \\
 &= 1 - O(\lambda).
 \end{align*}
Since $\mu_2(w,\lambda)\leq 1$, the proof is complete.
\endproof

\medskip

\noindent{\sc Proof of theorem \ref{thm:Hausdorff}}. \quad

Let $w\in \R^2$ be given, and let $z=R_{\lambda}(w)$, 
so that ${\mathcal O}_{\tau}(z)$ 
is the return orbit which shadows the integrable orbit $\Pi(w)$. 
We have already seen, in the proof of proposition \ref{prop:mu_2}, 
that by taking a sufficiently small value of $\lambda$, we can ensure that the perturbed orbit of $z$ 
stays arbitrarily close to the polygon $\Pi(z)$.
It follows that:
 $$ \forall \xi\in {\mathcal O}_{\tau}(z): \hskip 20pt d_H(\xi,\Pi(w)) = O(\lambda) $$
as $\lambda\rightarrow 0$.

Neighbouring points $\xi,\xi+\mathbf{v}(\xi)$ in the return orbit ${\mathcal O}_{\tau}(z)$ are also 
$O(\lambda)$-close as $\lambda\rightarrow 0$, so the result follows.
\endproof

\section{Regular domains and statement of the main theorems}\label{sec:MainTheorems}

In this section we state theorems A and B, after the necessary preparation.
Then we briefly discuss the conditions of theorem B.

In section \ref{sec:IntegrableLimit}, the positive real line was partitioned 
into critical intervals $I^e, e\in\cE$, each corresponding to a distinct polygon 
class of the integrable system; accordingly, the set $\Gamma$ of critical polygons
partitioned the plane.
In section \ref{sec:Recurrence} we reduced the near-integrable round-off map 
$F_\lambda$ to a return map $\Phi$ of a thin domain $X$, placed along the 
symmetry axis. 
In this section we partition the set $X$ into sub-domains which play the same 
role for the perturbed orbits as the intervals $I^e$ for the integrable orbits.

By theorem \ref{thm:Hausdorff}, the return orbit ${\mathcal O}_{\tau}(z)$ of 
a point $z\in X$ shadows the orbit $\Pi(z)$ of the integrable Hamiltonian $\cP$. 
However, the quantity $\cP$ is not constant along perturbed orbits.
To deal with this problem, we introduce the following sequence of sets
\begin{equation*}
X^e= \{ z\in X\, :\,\forall w\in {\mathcal O}_{\tau}(z),\, \cP(w) \in I^e \}
\qquad e\in\cE.
\end{equation*}
It remains to match the vertex list associated with $I^e$. 
To this end, it is necessary to replace the sets $X^e$ by smaller 
\textbf{regular domains}, and then prove that, in the limit, these 
domains have full density in $X$.

We start by defining the \textbf{edges} of ${\mathcal O}_{\tau}(z)$, 
as the non-empty sets of the form
\begin{displaymath}
 B_{m,n}\cap {\mathcal O}_{\tau}(z),\qquad m,n\in\Z.
\end{displaymath}
For sufficiently small $\lambda$, consecutive edges 
of ${\mathcal O}_{\tau}(z)$ must lie in adjacent boxes, 
and transitions between edges occur when the orbit meets the set 
$\Lambda$ given in equation (\ref{eq:Lambda}). 
Thus we call the set ${\mathcal O}_{\tau}(z)\,\cap\,\Lambda$ the set of \textbf{vertices} of ${\mathcal O}_{\tau}(z)$.
By analogy with the vertices of the polygons, we say that the return orbit 
${\mathcal O}_{\tau}(z)$ has a vertex on $x=m$ of \textbf{type} $v$ 
if there exists a point $w\in{\mathcal O}_{\tau}(z)$ such that
\begin{displaymath}
 w\in B_{m,v} \cap F_{\lambda}^{-4}(B_{m-1,v}) 
  \hskip 20pt \mbox{or} \hskip 20pt w\in B_{m-1,v} \cap F_{\lambda}^{-4}(B_{m,v}).
\end{displaymath}
Similarly for a vertex on $y=n$ of type $v$.
A perturbed orbit is \textbf{critical} if it has a vertex whose type is undefined, 
i.e., if there exists $w\in{\mathcal O}_{\tau}(z)$ such that
\begin{displaymath}
 w\in B_{m,n} \cap F_{\lambda}^{-4}(B_{m\pm1,n\pm1})
\end{displaymath}
for some $m,n\in\Z$ (see figure \ref{fig:critical_vertex}).

\begin{figure}[ht]
        \centering
        \vspace*{-80pt}
	\epsfig{file=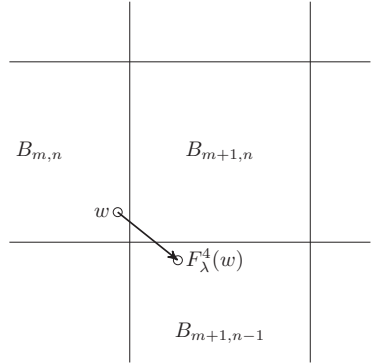,scale=0.8}
        \vspace*{-400pt}
        \caption{A critical vertex $w\in B_{m,n} \cap F_{\lambda}^{-4}(B_{m\pm1,n\pm1})$.}
        \label{fig:critical_vertex}
\end{figure}

By excluding points whose perturbed orbit is critical, we'll construct a subset 
$\Xe$ of $X^e$ with the following properties: \/ $i)$\/ for all 
$z\in\Xe$ the orbit ${\mathcal O}_{\tau}(z)$ has the same sequence 
of vertex types as $\Pi(z)$; \/ $ii)$\/ the union of all sets $\Xe$ still 
has full density in $X$, as $\lambda\to 0$. 

We now give the construction of $\Xe$. 
Let the set $\Sigma\subset\Lambda$ be given by
\begin{equation}\label{eq:Sigma}
 \Sigma = \bigcup_{m,n\in\Z}\Sigma_{m,n},
\end{equation}
where
\begin{displaymath}
 \Sigma_{m,n} = \{ z\in\Lambda: \|z - (m,n)\|_{\infty} \leq \lambda( \, \|(2m+1,2n+1)\|_{\infty}+2) \, \}
\end{displaymath}
and $\|(u,v)\|_{\infty} = \max(|u|,|v|)$. The set $\Sigma_{m,n}$ is a small domain, adjacent
to the integer point $(m,n)$ (see figure \ref{fig:LambdaSigma_plot}).

If $z\in X^e$ for some $e\in\cE$, we say that $z$ is \textbf{regular} if two properties hold: 
firstly, $z$ itself is not a vertex of ${\mathcal O}_{\tau}(z)$, i.e.,
$
 z\in X\setminus \Lambda,
$
and secondly the orbit ${\mathcal O}_{\tau}(z)$ does not intersect the set $\Sigma$.
Points which are not regular are called \textbf{irregular}. Then the set $\Xe$ is defined as
\begin{equation}\label{eq:tildeX^e}
 \Xe = \{ z\in X^e : \cP(z) \in \tilde{I}^e(\lambda) \},
\end{equation}
where $\tilde{I}^e(\lambda)\subset I^e$ is the largest interval such that all points in 
$\Xe$ are regular.

In principle, the interval $\tilde{I}^e(\lambda)$ need not be uniquely defined, and may be empty. 
However, the following proposition ensures that $\tilde{I}^e(\lambda)$ is well-defined for all 
sufficiently small $\lambda$, and indeed that the irregular points have zero density in $X$ 
as $\lambda\rightarrow 0$.

\begin{proposition} \label{thm:I^e_tilde}
 Let $\Xe$ and $\tilde I^e$ be as above. 
 Then, for all $e\in\cE$, we have
 \begin{displaymath}
  \lim_{\lambda\rightarrow 0}\frac{ |\tilde{I}^e(\lambda)| }{|I^e|} = 1.
 \end{displaymath}
\end{proposition}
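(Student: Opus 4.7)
\medskip

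\noindent\textbf{Proof proposal.} \quad
My plan is to show that the set of ``bad'' values of $\cP(z)$ for $z\in X^e$---those producing irregular points---is confined to two sub-intervals of $I^e$ adjacent to the endpoints, each of length $O(\lambda)$. Then $\tilde I^e(\lambda)$ can be taken to be the complementary interval, whose length is $|I^e|-O(\lambda)$.

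First I would fix $e=e_i\in\cE$ and study how the vertices of the polygon $\Pi(z)$ depend on $\cP(z)\in I^e$. By theorem \ref{thm:Polygons}, for $\cP(z)\in I^e$ the vertices lie in $\Delta\setminus\Z^2$, and as $\cP(z)\to e$ or $\cP(z)\to e_{i+1}$ the polygon approaches a critical polygon, at which point $r(e)$ (respectively $r(e_{i+1})$) vertices coalesce at lattice points. Using the piecewise affine structure of $P$ (equation (\ref{eq:P})) and the explicit formula (\ref{eq:Pinv}) for $P^{-1}$, I would show that the Euclidean distance from each vertex of $\Pi(z)$ to the nearest lattice point $(m,n)$ is a piecewise linear function of $\cP(z)$, bounded below by $c_e\min(\cP(z)-e,e_{i+1}-\cP(z))$ for some explicit positive constant $c_e$ depending only on $e$.

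Next I would combine the shadowing estimates from section \ref{sec:Proofs} with the structure of $\Sigma$. From the definition (\ref{eq:Sigma}), every component $\Sigma_{m,n}$ with $|m|,|n|\leq\lfloor\sqrt{e_{i+1}}\rfloor+1$ is contained in an $\infty$-ball of radius $C_e\lambda$ around the lattice point $(m,n)$, for some constant $C_e$. From the bounds established in the proof of proposition \ref{prop:mu_2}, and using that the number of transition points along a return orbit in $X^e$ is bounded by a constant $m_e$ depending only on $e$ (since a non-critical polygon in $I^e$ has exactly $4(2\lfloor\sqrt{e}\rfloor+1)$ edges), one obtains $|\cP(F_\lambda^k(z))-\cP(z)|=O(\lambda)$ uniformly along the return orbit. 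Dividing by $\|\nabla\cP\|$, which is bounded below on the relevant annulus, this yields a transversal shadowing estimate: every point of ${\mathcal O}_\tau(z)$ lies within Euclidean distance $D_e\lambda$ of $\Pi(z)$.

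Combining these two steps, if $\cP(z)\in I^e$ satisfies $\min(\cP(z)-e,e_{i+1}-\cP(z))>(C_e+D_e)\lambda/c_e$, then every vertex of $\Pi(z)$ is at distance $>(C_e+D_e)\lambda$ from every lattice point, and therefore the $D_e\lambda$-neighbourhood of $\Pi(z)$ misses every $\Sigma_{m,n}$; since ${\mathcal O}_\tau(z)\subset\Sigma\Rightarrow{\mathcal O}_\tau(z)\cap\Sigma\neq\emptyset$, the orbit avoids $\Sigma$. The remaining regularity condition $z\notin\Lambda$ excludes lattice points in a strip of width $O(\lambda)$ along the boundary of the relevant box; by the same argument applied at $z$ itself, this translates to an excluded set of $\cP$-values of length $O(\lambda)$ at the endpoints of $I^e$. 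Taking $\tilde I^e(\lambda)=(e+K_e\lambda,e_{i+1}-K_e\lambda)$ for a sufficiently large $K_e$ then gives an interval with the required properties, and the desired limit follows from $|I^e|=e_{i+1}-e>0$.

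The main obstacle will be step two: upgrading the qualitative shadowing statement of theorem \ref{thm:Hausdorff} into the quantitative linear-in-$\lambda$ estimate needed to compare with $\Sigma_{m,n}$. This requires reusing the explicit error bounds from the proof of proposition \ref{prop:mu_2} together with the uniform lower bound on $\|\nabla\cP\|$ away from the origin, and being careful that all implicit constants depend only on $e$ and not on $\lambda$. A secondary technical point is verifying that the excluded set of $\cP$-values at each endpoint is truly an interval (not a more complicated set), which follows from the monotone dependence of each vertex's position on $\cP(z)$ within $I^e$.
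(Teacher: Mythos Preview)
Your strategy is correct and coincides with the paper's: irregular points force $\cP(z)$ into an $O(\lambda)$-neighbourhood of the endpoints of $I^e$, using the $O(\lambda)$ variation of $\cP$ along a return orbit already established in the proof of proposition~\ref{prop:mu_2}. The paper, however, carries this out more directly. It never converts to Euclidean distances or analyses the vertices of $\Pi(z)$; it simply observes that if $w\in\mathcal{O}_\tau(z)\cap\Sigma$ then $d_H(w,\Z^2)=O(\lambda)$, hence (since $\cP$ is Lipschitz and $\cP(\Z^2)=\cE$) $\cP(w)$ lies within $O(\lambda)$ of some element of $\cE$, which for $w\in I^e$ must be $e$ or $e_{i+1}$; the $O(\lambda)$ orbit variation then transfers this to $\cP(z)$. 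The cases $z\in\Lambda$ and $z\notin X^e$ are disposed of by the same mechanism.

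Your route via vertices is not wrong, but the inference ``every vertex of $\Pi(z)$ is at distance $>(C_e+D_e)\lambda$ from $\Z^2$, therefore the $D_e\lambda$-neighbourhood of $\Pi(z)$ misses every $\Sigma_{m,n}$'' is not immediate: an interior point of an edge of $\Pi(z)$ can lie closer to a corner of its box than either endpoint of that edge does. What you actually need is that the \emph{whole polygon} $\Pi(z)$ stays far from $\Z^2$, and the cleanest way to see this is precisely the Lipschitz estimate $d_H(\Pi(z),(m,n))\geq |\cP(z)-(m^2+n^2)|/\sup\|\nabla\cP\|$. Once you use that, your first step (the piecewise-linear analysis of vertex positions) and the lower bound on $\|\nabla\cP\|$ become superfluous, and you are back to the paper's two-line argument. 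Your identification of the ``main obstacle'' is over-cautious: the linear-in-$\lambda$ bound on $\cP$-variation is exactly what is proved in section~\ref{sec:Proofs}, and the paper invokes it without further work.
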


\begin{proof}
Consider $z\in X$ such that $\cP(z)\in I^e$ for some $e\in\cE$. 
If the orbit of $z$ strays between polygon classes, i.e., if $z\notin X^e$, 
then we have
\begin{displaymath}
 \exists w\in {\mathcal O}_{\tau}(z): \hskip 20pt \cP(w) \notin I^e.
\end{displaymath}
However, in the proof of theorem \ref{thm:Hausdorff} in section \ref{sec:Proofs}, 
we showed that the maximum variation in $\cP$ along an orbit 
${\mathcal O}_{\tau}(z)$ is of order $\lambda$, as $\lambda\rightarrow 0$:
\begin{displaymath}
 \forall z\in X,\, \forall w\in {\mathcal O}_{\tau}(z): 
\hskip 20pt \cP(w) - \cP(z) = O(\lambda).
\end{displaymath}
Hence we have
\begin{equation}\label{eq:Pvariation}
 \cP(z)=\cP(w)+O(\lambda)=
\left\{\begin{array}{ll} 
 e+O(\lambda) & \cP(w)\leq e \\
 e'+O(\lambda) & \cP(w)\geq e' \\
\end{array}\right.
\end{equation}
where $e'$ is the successor of $e$ in the sequence $\cE$. 
In both cases, $\cP(z)$ is near the boundary of $I^e$.

If $z\in X^e$ but $z$ is irregular, then either $z\in\Lambda$ or the orbit of 
$z$ intersects the set $\Sigma$. If $z\in\Lambda$, then one of 
its coordinates must be nearly integer:
$$ 
 d_H(z,\,\Delta) = O(\lambda),
$$
where the set $\Delta$ was defined in (\ref{eq:Delta}).
However, as the domain $X^e$ lies in an $O(\lambda)$-neighbourhood of the 
symmetry line $\Fix{G}$, it follows that both coordinates must be nearly 
integer, giving
\begin{displaymath}
 d_H(z,\,\Z^2) = O(\lambda).
\end{displaymath}
Again, $\cP(z)$ lies in a $O(\lambda)$-neighbourhood of the boundary of $I^e$.

Similarly, if there is a point $w\in{\mathcal O}_{\tau}(z)\cap \Sigma$,
then, by construction,
\begin{displaymath}
 d_H(w,\,\Z^2) = O(\lambda).
\end{displaymath}
Hence we have again the estimate (\ref{eq:Pvariation}).

Combining these observations, we have
$$
  \frac{ |\tilde{I}^e(\lambda)| }{|I^e|} = 1 - \frac{ |I^e\setminus\tilde{I}^e(\lambda)| }{|I^e|}
 = 1 - \frac{ O(\lambda) }{|I^e|}
$$
and the result follows.
\end{proof}

We now give an explicit representation of $\Xe$. Let $v_1$ be 
the first entry in the vertex list of the critical number $e$. 
By the construction (\ref{eq:tildeX^e}) of $\Xe$, we have 
$\Xe\subset B_{v_1,v_1}\setminus\Lambda$. 
Hence, by lemma \ref{thm:Lambda}, the discrete vector field in $\Xe$ satisfies
\begin{displaymath}
 \mathbf{v}(z) = \mathbf{w}(z) = \mathbf{w}_{v_1,v_1} \hskip 40pt z\in \Xe.
\end{displaymath}
Consequently by the definition (\ref{eq:X}) of the Poincare section $X$, if 
$z=\lambda(x,y)\in\Xe$ then 
\begin{displaymath}
 -(2v_1+1) \leq x-y < 2v_1+1.
\end{displaymath}
Hence the set $\Xe$ is given by:
\begin{equation}\label{eq:tildeX^eII}
 \Xe = \{ z=\lambda(x,y)\in(\lambda\Z)^2 : -(2v_1+1) \leq x-y < 2v_1+1, \; \cP(z)\in\tilde{I}_e \}.
\end{equation}

Now we show that the sequence of sets $\Xe$ fulfil their objective, 
which was to exclude all points $z\in X$ whose perturbed orbit is critical 
in the sense defined above.

\begin{proposition} \label{prop:Xe_tilde}
 If $z\in\Xe$ for some $e\in\cE$, then the perturbed orbit of $z$ is not critical.
\end{proposition}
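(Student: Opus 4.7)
The plan is to argue by contradiction: suppose $z\in\Xe$ has a critical return orbit, and then show that the offending critical vertex must necessarily lie in $\Sigma$, violating the regularity condition built into the definition of $\Xe$. So the goal reduces to a purely local geometric statement: if $w\in B_{m,n}$ satisfies $F_\lambda^4(w)\in B_{m+\epsilon_1,n+\epsilon_2}$ with both $\epsilon_i\in\{-1,1\}$, then $w$ lies in $\Sigma_{m',n'}$ for some integer point $(m',n')$ at a corner of $B_{m,n}$.

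To carry this out, I would reuse the explicit iteration formulas (\ref{eq:Fabcd})--(\ref{eq:abcd}) from the proof of lemma \ref{thm:Lambda}, which express the integer labels of the four boxes visited by $z,F_\lambda(z),\ldots,F_\lambda^4(z)$ in terms of $m,n$ and the intermediate corrections $a,b,c,d$. For $\lambda$ small enough (below $\lambda^*$ from lemma \ref{thm:Lambda}) the labels $b,d$ lie in $\{m-1,m,m+1\}$ and $a,c\in\{n-1,n,n+1\}$, and a critical vertex corresponds precisely to the event that both $d\neq m$ and $c\neq n$. Substituting the explicit floor expressions for $c$ and $d$ then forces $\lambda x$ and $\lambda y$ to be within $\lambda(|2m+1|+O(1))$ and $\lambda(|2n+1|+O(1))$ respectively of some integers $m',n'$; writing $w=\lambda(x,y)$, this is exactly the statement $\|w-(m',n')\|_\infty\leq\lambda(\|(2m'+1,2n'+1)\|_\infty+2)$, so $w\in\Sigma_{m',n'}\subset\Sigma$.

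The main obstacle is keeping track of cases: the eight possible sign combinations $(\epsilon_1,\epsilon_2)$ together with the corresponding admissible values of $a$ and $b$ each give a slightly different system of inequalities, and in some of these the distinguished integer point $(m',n')$ is a corner of $B_{m,n}$ other than $(m,n)$ itself (indeed $(m+1,n)$, $(m,n+1)$ or $(m+1,n+1)$). The book-keeping mirrors the case analysis at the end of the proof of lemma \ref{thm:Lambda}, where essentially the same argument pinned non-transition exceptions to the single point $(0,0)$; the constant $2$ in the definition of $\Sigma_{m,n}$ is chosen precisely so as to absorb the $O(1)$ slack arising in this case analysis. Once the geometric claim is established, the conclusion is immediate: since $z\in\Xe$ is regular, no point of ${\mathcal O}_\tau(z)$ lies in $\Sigma$, so no critical vertex can occur, and the perturbed orbit of $z$ is non-critical.
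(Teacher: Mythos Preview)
Your proposal is correct and follows essentially the same route as the paper: argue by contrapositive, take a critical vertex $w\in B_{m,n}$ with $F_\lambda^4(w)\in B_{m\pm1,n\pm1}$, and use the explicit iteration formulas (\ref{eq:Fabcd})--(\ref{eq:abcd}) together with the constraints (\ref{eq:bd_ac_sets}) to bound $\|w-(m',n')\|_\infty$ by $\lambda(\|(2m'+1,2n'+1)\|_\infty+2)$ for an appropriate corner $(m',n')$, forcing $w\in\Sigma$. The only difference is that the paper avoids your full case analysis by first reducing via the quadrant symmetry: assuming $m,n\geq 0$, the orientation of the vector field forces the diagonal transition to be $B_{m+1,n-1}$, leaving a single case (with $(m',n')=(m+1,n)$) rather than the several you anticipate.
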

\begin{proof}
Suppose that $z\in X$ with $\cP(z)\in I^e$ is critical, i.e., 
there exists $w\in{\mathcal O}_{\tau}(z)$ such that
\begin{displaymath}
 w\in B_{m,n} \hskip 20pt \mbox{and} 
  \hskip 20pt F_{\lambda}^{4}(w)=w+\mathbf{v}(w)\in B_{m\pm1,n\pm1}
\end{displaymath}
for some $m,n\in\Z$. We will show that $z\notin\Xe$. 
For simplicity, we assume that $m$ and $n$ are both non-negative, so that 
by the orientation of the vector field in the first quadrant:
\begin{displaymath}
 F_{\lambda}^{4}(w)=w+\mathbf{v}(w)\in B_{m+1,n-1}.
\end{displaymath}

Recalling the proof of lemma \ref{thm:Lambda}, the expression (\ref{eq:v_abcd}) 
for the perturbed vector field $\mathbf{v}$ at the point $w=\lambda(x,y)$ 
implies that
\begin{equation}\label{eq:vw}
\mathbf{v}(w)=
\lambda( a+c+1, -(m+b+1)),
\end{equation}
where the integers $a,b,c$ are given by (\ref{eq:abcd}).
By assumption, $w+\mathbf{v}(w)\in B_{m+1,n-1}$, which implies that
\begin{displaymath}
 c=n-1 \hskip 20pt \mbox{and} \hskip 20pt d =m+1.
\end{displaymath}
It follows that the difference between the values $\lambda x$, $\lambda y$ and the integers 
$m+1$, $n$, respectively, is bounded according to
\begin{align*}
-\lambda(a+n) = -\lambda(a+c+1) &\leq \lambda x - (m+1) < 0 \\
0 &\leq \lambda y - n < \lambda(m+b+1).
\end{align*}
Combining this observation with the bounds (\ref{eq:bd_ac_sets}) on $a$ and $b$ gives
\begin{align*}
\| w-(m+1,n) \|_{\infty} &\leq \lambda\max( |a+n|, |m+b+1|) \\
 &\leq \lambda\max( |2n+1| + |n-a|+1, |2m+1| + |m-b|) \\
 &\leq \lambda( \|2m+1,2n+1\|_{\infty} + 2).
\end{align*}
Hence $w\in\Sigma$ and $z$ is irregular, so $z\notin \Xe$. The cases where $m$ or $n$ are negative proceed similarly.
\end{proof}

\medskip

Below, in equation (\ref{eq:Le}) of section \ref{sec:lattice}, we shall define 
a sequence of lattices $\Le\subset(\lambda\Z)^2$, $e\in\cE$,
independent of $\lambda$ up to scaling, such that within the domain 
$\Xe$, the return map $\Phi$ is equivariant under the group of 
translations generated by $\Le$. Formally,

\begin{maintheorem} \label{thm:Phi_equivariance}
For every $e\in\cE$, and all sufficiently small $\lambda$, 
the map $\Phi$ commutes with translations by the elements 
of $\Le$ on the domain $\Xe$
\begin{equation} \label{eq:Phi_equivariance}
 \forall l\in\Le, \; \forall z,z+l\in \Xe,
 \hskip 10pt 
 \Phi(z + l) \equiv \Phi(z) + l \mod{\mathbf{w}_{v_1,v_1}}.
\end{equation}
\end{maintheorem}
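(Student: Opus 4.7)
The approach is to decompose the first-return map $\Phi$ along the combinatorial skeleton of the return orbit. By proposition \ref{prop:Xe_tilde}, every $z \in \Xe$ has a non-critical return orbit whose vertex-type sequence coincides with $V(e) = (v_1, \ldots, v_k)$ after the 8-fold unfolding. Using the strip map of section \ref{sec:StripMap}, I would write $\Phi = S_N \circ \cdots \circ S_1$, where $N = 4(2k-1)$ counts the vertices in one revolution, and each $S_j$ accelerates the dynamics from one vertex to the next. Inside each box $B_{m,n}$ the map $F_\lambda^4$ acts as the constant translation $z \mapsto z + \mathbf{w}_{m,n}$, so each $S_j$ has the form $z \mapsto z + k_j(z)\,\mathbf{w}_{m_j,n_j}$, with $k_j(z) \in \N$ given by a floor-function expression in the coordinates of $z$.

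The heart of the proof is that each $S_j$ is equivariant, up to an integer multiple of $\mathbf{w}_{m_j,n_j}$, under the group of translations which shift the argument of its floor function by an integer. Concretely, if $z, z+l \in \Xe$ with $l \in (\lambda\Z)^2$ preserving the vertex-type sequence, one finds $k_j(z+l) = k_j(z) + \delta_j(l)$ for some integer $\delta_j(l)$, hence
\begin{displaymath}
 S_j(z+l) = S_j(z) + l + \delta_j(l)\,\mathbf{w}_{m_j,n_j}.
\end{displaymath}
The lattice $\Le$ is then characterised as the set of $l$ for which the accumulated correction $\sum_j \delta_j(l)\,\mathbf{w}_{m_j,n_j}$ lies in $\Z \cdot \mathbf{w}_{v_1,v_1}$. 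The non-archimedean feature promised in the introduction is that, for $\lambda$ sufficiently small, the congruences defining these $\delta_j$ depend only on the arithmetic of the vertex types $v_1, \ldots, v_k$, making $\Le$ independent of $\lambda$ up to the overall scaling.

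With these ingredients in hand, the conclusion follows by an induction on $j = 1, \ldots, N$: composing the $S_j$ carries the translation $l$ through unchanged, and the accumulated integer multiples of $\mathbf{w}_{m_j,n_j}$ reduce, by the defining condition of $\Le$, to an integer multiple of $\mathbf{w}_{v_1,v_1}$. The residual congruence modulo $\mathbf{w}_{v_1,v_1}$ in equation (\ref{eq:Phi_equivariance}) then arises concretely because the return time $\tau(z+l)$ may differ from $\tau(z)$ by a few iterations of $F_\lambda^4$ inside the initial box $B_{v_1,v_1}$, consistently with the defining inequalities of the section $X$.

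The main obstacle is the technical verification that, for $\lambda$ small enough, the $\Le$-translates of $z$ really do traverse the same combinatorial sequence of boxes without either orbit straying into $\Lambda \cup \Sigma$, and that the per-vertex integer shifts $\delta_j(l)$ assemble into a single $\lambda$-independent congruence condition. The regularity result and the explicit form of the vertex corrections supplied by propositions \ref{thm:regularity} and \ref{thm:epsilon_j} of section \ref{sec:StripMap} are designed precisely to furnish this control; piecing them together to verify the equivariance across all $N$ vertices is what the final section of the paper must carry out.
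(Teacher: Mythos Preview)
Your inductive strategy along the strip map is the right idea, and it is what the paper does. But two things in your outline do not match the actual mechanism, and one of them is a genuine gap.

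First, a structural difference. You propose to track all $N=4(2k-1)$ vertices of a full revolution. The paper does not do this: by proposition~\ref{thm:regularity}, $\Phi(z)\equiv(\Psi^{2k}\circ F_\lambda)(z)\pmod{\mathbf{w}_{v_1,v_1}}$, so only the $2k-1$ vertices of the \emph{first quarter-turn} are tracked via $\Psi$, and a single application of $F_\lambda$ (a near-$90^\circ$ rotation) jumps the remaining three quadrants. Your four-quadrant approach would force you to control how the lattice translation interacts with $F_\lambda$ at each quadrant boundary, which the paper avoids entirely until one final step.

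Second, and more seriously, your model $S_j(z)=z+k_j(z)\mathbf{w}_{m_j,n_j}$ omits the vertex kick. By proposition~\ref{thm:epsilon_j} and equation~(\ref{eq:Psi^jp1}), the actual transition is
\[
\Psi^{j+1}(z)=\Psi^j(z)+\lambda\epsilon_j(\sigma_j)\,\mathbf{e}+t\,\mathbf{w}_{m,n},
\]
and the extra term $\lambda\epsilon_j\mathbf{e}$ is precisely where the perturbation enters. Your claimed equivariance $S_j(z+l)=S_j(z)+l+\delta_j(l)\mathbf{w}_{m_j,n_j}$ fails unless $\epsilon_j$ is the same for $z$ and $z+l$, which requires $\sigma_j(z)=\sigma_j(z+l)$. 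This is not guaranteed merely by $z,z+l$ sharing the same vertex-type sequence; it is a strictly finer congruence condition. The paper's key device here is lemma~\ref{thm:sigma_lattices}: it constructs a \emph{nested} sequence of lattices $\Le_1\supseteq\Le_2\supseteq\cdots\supseteq\Le_{2k-1}=\Le$ and proves inductively (hypotheses (H1) and (H2)) that $l\in\Le_j$ is equivalent to the orbit codes agreeing through step $j$, and that in this case $\Psi^j(z+l)-\Psi^j(z)=\lambda(2a+b)p_j\mathbf{e}$ for explicit integers $p_j$. Note that the translation does \emph{not} propagate as ``$l$ plus a correction''; it is transformed into a pure shift along the non-integer coordinate of the current vertex. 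Your $\delta_j(l)$ would in general depend on $z$, not just on $l$, unless this nested structure is in place.

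Finally, you characterise $\Le$ as the set of $l$ making the accumulated correction vanish modulo $\mathbf{w}_{v_1,v_1}$. The paper does the reverse: $\Le$ is \emph{defined} arithmetically via $q=\mathrm{lcm}\bigl((2v_{\iota(1)}+1)^2,\ldots\bigr)$ in equations~(\ref{eq:q})--(\ref{eq:Le}), and lemma~\ref{thm:sigma_lattices} then proves this specific lattice has the required cylinder-set property. Your implicit characterisation would still need to be computed, and that computation \emph{is} lemma~\ref{thm:sigma_lattices}.
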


There is a critical value of $\lambda$, depending on $e$, above which the statement 
of the theorem is empty, as $\Xe$ is insufficiently populated for a pair 
of points $z,z+l\in \Xe$ to exist. 
We write (mod $\mathbf{a}$) for some vector $\mathbf{a}$ to 
denote congruence modulo the one-dimensional module $\langle\mathbf{a}\rangle$ 
generated by $\mathbf{a}$.
The congruence under the local integrable vector field 
$\mathbf{w}_{v_1,v_1}$ in equation (\ref{eq:Phi_equivariance}) is necessary for 
the case that $\Phi(z) + l\notin X$.

Furthermore, we define the fraction of symmetric, minimal orbits in $\Xe$
\begin{displaymath}
 \delta(e,\lambda) = \frac{\# \{ z\in \Xe : G({\mathcal O}(z))={\mathcal O}(z), \Phi(z)=z \}}{\# \Xe},
\end{displaymath}
and prove the following result on the persistence of such orbits in the limit 
$\lambda\rightarrow 0$.

\begin{maintheorem} \label{thm:minimal_densities}
Let $e\in\cE$, and let $(v_1,\dots,v_k)$ be the vertex list of the 
corresponding polygon class.
If $2v_1+1$ or $2v_k+1$ is coprime to $2v_j+1$ for all other vertex types 
$v_j$, i.e., if
\begin{equation} \label{eq:v1_k_coprimality}
\exists \; i\in\{1,k\}, \quad \forall j\in\{1,\ldots\,k\},\quad
v_j\neq v_i \,\Rightarrow \, \gcd(2v_i+1,2v_j+1) =1,
\end{equation}
then, for sufficiently small $\lambda$, the number of symmetric fixed points
of $\Phi$ in $\Xe$ modulo $\Le$ is independent of $\lambda$. 
Thus the asymptotic density of symmetric fixed points in $\Xe$ converges,
and its value is given by
\begin{equation}\label{eq:Density}
 \lim_{\lambda\rightarrow 0} \delta(e,\lambda) = \frac{1}{(2\lfloor \sqrt{e} \rfloor+1)(2\lfloor \sqrt{e/2} \rfloor+1)}.
\end{equation}
\end{maintheorem}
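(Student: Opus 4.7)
The strategy is to exploit the $\Le$-equivariance of $\Phi$ from Theorem A to reduce the counting of symmetric fixed points to a finite problem in a fundamental cell of $\Le$, and then to apply a Chinese-remainder reduction enabled by hypothesis (\ref{eq:v1_k_coprimality}). First I would observe that since $\Phi$ commutes with $\Le$-translations on $\Xe$ (the correction $\mathbf{w}_{v_1,v_1}$ in (\ref{eq:Phi_equivariance}) is perpendicular to $\Fix G$ and of length comparable to the width of the strip $\Xe$, and hence vanishes on genuine fixed points that remain in $\Xe$), the fixed-point set of $\Phi$ in $\Xe$ is a union of complete $\Le$-cosets, and likewise for its subset of symmetric fixed points---here using the $G$-reversibility of $F_\lambda$ together with the $G$-invariance of the lattice $\Le$ as a set. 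Writing $N$ for the index of $\Le$ in $(\lambda\Z)^2$ and $C(e)$ for the number of $\Le$-cosets in $\Xe$ consisting of symmetric fixed points, a lattice-counting argument together with Proposition \ref{thm:I^e_tilde} (to control boundary effects near $\partial\tilde I^e$) gives $\delta(e,\lambda)\to C(e)/N$ as $\lambda\to 0$, provided $\lambda$ is small enough that every coset class is represented in $\Xe$.

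Next I would translate $\Phi(z)=z$ and $G({\mathcal O}(z))={\mathcal O}(z)$ into a system of congruences on symbolic coordinates. By the strip-map construction of Section \ref{sec:StripMap}, a return orbit is coded by an integer tuple $(n_1,\ldots,n_k)$ recording the number of strip-map iterates spent at vertices of types $v_1,\ldots,v_k$, and the cylinders of this coding are cosets of the nested sublattices whose limit is $\Le$. Closure of the return orbit together with time-reversal symmetry imposes a palindromic constraint $n_j=n_{k-j+1}$ and two linear closure equations whose moduli turn out to be $2v_1+1$ and $2v_k+1$, reflecting the extremal vertex types on $\Fix G$ and on the $x$-axis, whose local vector fields $\mathbf{w}_{v_1,v_1}$ and $\mathbf{w}_{v_k,0}$ govern the closure conditions.

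Under hypothesis (\ref{eq:v1_k_coprimality}), one of $2v_1+1$, $2v_k+1$ is coprime to every other $2v_j+1$, and the Chinese remainder theorem collapses the above system to a single nondegenerate congruence modulo $(2v_1+1)(2v_k+1)$ with a unique solution; this forces $C(e)=1$. A direct computation of the covolume of $\Le$ from its explicit generators (equation (\ref{eq:Le})) then gives $N=(2v_1+1)(2v_k+1)$, and combining the two yields the density formula (\ref{eq:Density}).

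The main obstacle lies in the middle step: obtaining the palindromic system with the correct moduli $2v_j+1$ from the analytic fixed-point condition, and verifying the precise structure of the $G$-action on the strip-map coding of Section \ref{sec:StripMap}. This requires careful use of Propositions \ref{thm:regularity} and \ref{thm:epsilon_j} together with the generators of $\Le$ constructed in Section \ref{sec:lattice}. Once the symbolic framework is in place, the coprimality hypothesis forces the Chinese remainder reduction and the density calculation is routine; without (\ref{eq:v1_k_coprimality}), the count $C(e)$ can vary with $e$ and no universal closed formula is available.
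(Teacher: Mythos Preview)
Your overall architecture---reduce to a fundamental domain of $\Le$ via Theorem A, characterise the symmetric fixed points symbolically, use coprimality to count---matches the paper. But two of your key quantitative claims are wrong, and they only produce the right density because the errors cancel.

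First, the index of $\Le$ in $(\lambda\Z)^2$ is not $(2v_1+1)(2v_k+1)$. A direct computation from the generators in (\ref{eq:Le}) gives $\#\Le = q$, the least common multiple defined in (\ref{eq:q}); see (\ref{eq:theta_e}). For $e=9$, with vertex list $(2,2,0,3)$, one gets $q=175$ while $(2v_1+1)(2v_k+1)=35$. Correspondingly, the number $C(e)$ of symmetric-fixed-point cosets is not $1$ but $n_k=q/((2v_1+1)(2v_k+1))$; in the example, $C(e)=5$. The coprimality hypothesis does not force a unique solution via CRT; rather, it guarantees that the value of the orbit-code entry $\sigma_k$ is \emph{equidistributed} over the cosets satisfying the $\Fix G$ condition (Lemmas \ref{thm:sigma_j_I} and \ref{thm:sigma_j_II}), so that exactly a $1/(2v_k+1)$ fraction of them have the required value.

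Second, your symbolic description---an integer tuple $(n_1,\dots,n_k)$ of transit times with a palindromic constraint---is not the correct mechanism. The paper codes an orbit by the residues $\sigma_j\in\{0,\dots,2v_j\}$ of the integer-coordinate at each vertex (equation (\ref{eq:sigma_j})), and the symmetry condition splits into two pieces coming from the two involutions $G$ and $H=F\circ G$: intersection with $\Fix G$ forces $\sigma_{-1}=\sigma_1$, while intersection with $\Fix H$ (near the $y$-axis, not the $x$-axis) forces the single congruence $2\sigma_k\equiv\lfloor\sqrt{e}\rfloor\pmod{2v_k+1}$ (Lemma \ref{thm:minimal_codes}). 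There is no palindrome on the full code; only $\sigma_{-1},\sigma_1,\sigma_k$ are constrained. You should rework the middle step around this characterisation and the equidistribution lemmas rather than a CRT-uniqueness argument.
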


As in theorem A, the smallness of $\lambda$ serves only to ensure that $\Xe$ 
is sufficiently populated for all congruence classes modulo $\Le$ to be 
represented. 

The condition (\ref{eq:v1_k_coprimality}) on the orbit code is clearly satisfied 
for infinitely many critical numbers $e$, e.g., those for which 
either $2\lfloor\sqrt{e}\rfloor+1$ or
$2\lfloor\sqrt{e/2}\rfloor+1$ is a prime number. 
The first violation occurs at $e=49$ (see Figure \ref{fig:V_table}), where $2v_1+1 = 9$ 
and $2v_k+1=15$ have a common factor.
We contrast this to the case of $e=52$, where $2v_k+1=15$ and $2v_2+1 = 9$ have a 
common factor, but $2v_1+1=11$ is prime, so the condition (\ref{eq:v1_k_coprimality}) holds.
Numerical experiments show that the density
of values of $e$ for which (\ref{eq:v1_k_coprimality}) holds decays very slowly,
reaching 1/2 for $e\approx 500,000$.


The stated condition on the orbit code is actually stronger than that 
we require in the proof. This was done to simplify the formulation of the
theorem. We remark that the weaker condition is still not necessary for 
the validity of the density expression (\ref{eq:Density}). 
At the same time, there are values of $e$ for which the density of symmetric minimal 
orbits deviates from the given formula, and convergence is not guaranteed. 
Our numerical experiments show that these deviations are small, and don't seem 
connected to new dynamical phenomena. 
More significant are the fluctuations in the density of non-symmetric orbits.
Its dependence on $e$ is considerably less predictable than for symmetric orbits, 
see figure \ref{fig:Density}.

\begin{figure}[!h]
        \centering
\epsfig{file=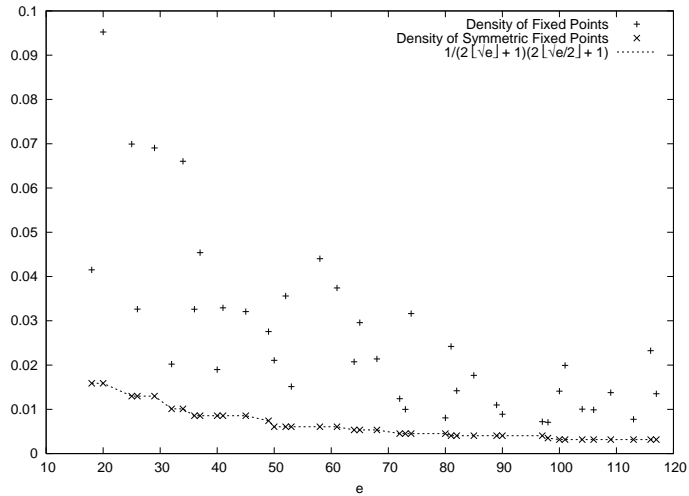,scale=0.75}
        \caption{The density of symmetric minimal orbits, as a function 
of the critical number $e$ (calculated for suitably small values of the parameter $\lambda$). 
The solid line represents the estimate (\ref{eq:Density}).
The scattered points correspond to the density of all minimal orbits, symmetric
and non-symmetric.
}
        \label{fig:Density}
\end{figure}

The asymptotic density of symmetric fixed points in $\Xe$ provides an obvious lower bound 
for the overall density of fixed points, which we denote $\eta(e,\lambda)$:
\begin{displaymath}
 \eta(e,\lambda) = \frac{\# \{ z\in \Xe : \Phi(z)=z \}}{\# \Xe}.
\end{displaymath}

\begin{corollary} \label{cor:Density}
Let $e\in\cE$ satisfy the condition (\ref{eq:v1_k_coprimality}) of theorem \ref{thm:minimal_densities}.
Then the asymptotic density of fixed points in $\Xe$ is bounded below as follows:
\begin{equation}\label{eq:Density_II}
 \liminf_{\lambda\rightarrow 0} \eta(e,\lambda) = \frac{1}{(2\lfloor \sqrt{e} \rfloor+1)(2\lfloor \sqrt{e/2} \rfloor+1)}.
\end{equation}
\end{corollary}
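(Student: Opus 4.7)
The plan is to deduce this corollary directly from Theorem B by the elementary observation that symmetric minimal orbits form a subset of all minimal orbits. Concretely, every point $z\in\Xe$ counted in the numerator of $\delta(e,\lambda)$ satisfies $\Phi(z)=z$, so it is also counted in the numerator of $\eta(e,\lambda)$. Since the two densities share a common denominator $\#\Xe$, we obtain the pointwise inequality
\begin{equation*}
 \delta(e,\lambda) \;\leq\; \eta(e,\lambda)
\end{equation*}
for all $\lambda>0$ small enough that $\Xe$ is non-empty.

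Next, I would invoke Theorem B under the hypothesis (\ref{eq:v1_k_coprimality}): the limit of $\delta(e,\lambda)$ exists as $\lambda\to 0$ and equals the explicit rational value on the right-hand side of (\ref{eq:Density}). Passing to $\liminf$ in the inequality above and using that a convergent sequence has $\liminf$ equal to its limit yields
\begin{equation*}
 \liminf_{\lambda\to 0} \eta(e,\lambda) \;\geq\; \lim_{\lambda\to 0}\delta(e,\lambda)
 \;=\; \frac{1}{(2\lfloor\sqrt{e}\rfloor+1)(2\lfloor\sqrt{e/2}\rfloor+1)},
\end{equation*}
which is exactly the stated lower bound.

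There is no real obstacle here: the entire content of the corollary is already encoded in Theorem B, and the only additional input is the trivial set inclusion between symmetric fixed points and arbitrary fixed points of the return map $\Phi$. The substantive work —establishing the lattice equivariance of $\Phi$ on $\Xe$ (Theorem A), identifying the asymptotic count of symmetric fixed points modulo $\Le$ via the coprimality condition, and controlling the density $\#\Xe/|\tilde I^e(\lambda)|$ through proposition \ref{thm:I^e_tilde}— has already been carried out upstream.
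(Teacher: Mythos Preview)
Your argument is correct and matches the paper's own reasoning: the paper simply remarks that ``the asymptotic density of symmetric fixed points in $\Xe$ provides an obvious lower bound for the overall density of fixed points,'' and states the corollary without further proof. Your derivation via $\delta(e,\lambda)\leq\eta(e,\lambda)$ and Theorem~B is exactly this observation made explicit; note also that the displayed ``$=$'' in (\ref{eq:Density_II}) should be read as ``$\geq$'' in line with the surrounding text, which is what you prove.
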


Note that we do not suggest that the density $\eta(e,\lambda)$ converges as $\lambda\rightarrow0$, 
regardless of whether the condition (\ref{eq:v1_k_coprimality}) is satisfied or not.

\section{The strip map $\Psi$}\label{sec:StripMap}

In section \ref{sec:Recurrence}, we saw that all non-zero points $z\in(\lambda\Z)^2$ 
where the discrete vector field $\mathbf{v}(z)$ deviates from the Hamiltonian 
vector field $\mathbf{w}(z)$ lie in the set of transition points $\Lambda$, 
defined in (\ref{eq:Lambda}). 
In order to study the dynamics at these points, where the perturbations from the 
integrable limit occur, we define a transit map $\Psi$ to $\Lambda$ which we 
call the \textbf{strip map}:
\begin{displaymath}
  \Psi : (\lambda\Z)^2 \rightarrow \Lambda
 \hskip 40pt
  \Psi(z) = F_{\lambda}^{4t(z)}(z),
\end{displaymath}
where the transit time $t$ to $\Lambda$ is well-defined for all points excluding the origin:
\begin{displaymath}
 t(z)= \min \{ i\in\N : F_{\lambda}^{4i}(z) \in \Lambda \} \hskip 40pt z\neq(0,0).
\end{displaymath}
(Since the origin plays no role in the present construction, to simplify 
notation we shall write $(\lambda\Z)^2$ for $(\lambda\Z)^2\setminus\{(0,0)\}$ 
in the rest of the paper, where appropriate.) 
By abuse of notation, we define $\Psi^{-1}$ to be the transit map to $\Lambda$ 
under $F_\lambda^{-1}$. Note that $\Psi^{-1}$ is the inverse of $\Psi$ only on $\Lambda$.

If $z\in B_{m,n}\setminus\Lambda$ for some $m,n\in\Z$, then lemma 
\ref{thm:Lambda} (page \pageref{thm:Lambda}) implies that $\Psi(z)$ 
satisfies \begin{equation} \label{eq:Psi_congruence2}
 \Psi(z) = z + t(z)\mathbf{w}_{m,n},
\end{equation}
where $\mathbf{w}_{m,n}$ is the value of the Hamiltonian vector field $\mathbf{w}$ in the box $B_{m,n}$. 
If $z\in\Lambda_{m,n}$, then we may have $\mathbf{v}(z)\neq \mathbf{w}(z)$, so the expression becomes
\begin{equation} \label{eq:Psi_congruence1}
 \Psi(z) = z + \mathbf{v}(z) + (t(z)-1)\mathbf{w}_{m,n}. 
\end{equation}

In the previous section, we identified the set ${\mathcal O}_{\tau}(z)\,\cap\,\Lambda$ as the set of 
vertices of the perturbed orbit ${\mathcal O}_{\tau}(z)$. Thus, within each quarter-turn, the 
strip map $\Psi$ represents transit to the next vertex. For $1\leq j\leq k$, where $k$ is the 
length of the vertex list at $z$, we say that the orbit ${\mathcal O}_{\tau}(z)$ \textbf{meets the $j$th vertex} 
at the point $\Psi^j(z)\in\Lambda$. 
For $z\in X$ regular, the polygon $\Pi(z)$ and the return orbit ${\mathcal O}_{\tau}(z)$ are non-critical,
by construction, and the number of sides of each is given by equation (\ref{eq:NumberOfSides}). 
Thus the full set of vertices of ${\mathcal O}_{\tau}(z)$ is given by
 \begin{displaymath}
 {\mathcal O}_{\tau}(z)\,\cap\,\Lambda = \bigcup_{i=0}^3 \; \bigcup_{j=1}^{2k-1} \; \{ (\Psi^j\circ F_{\lambda}^i)(z) \}.
\end{displaymath}
Recall that the vertices of a polygon (or orbit) are numbered in the clockwise 
direction ---the orientation of the integrable vector field $\mathbf{w}$. 
Hence the first $2k-1$ vertices (those lying in the first quarter-turn) are 
given by $(\Psi^j(z)),\,{1\leq j \leq 2k-1}$. 
The action of $F_{\lambda}$ moves points from one quadrant to the next in the 
opposing (anti-clockwise) direction, so that the vertices 
$((\Psi^j\circ F)(z)),\,{1\leq j \leq 2k-1}$ are the last $2k-1$ vertices. 
Thus the following proposition is a simple consequence of the number of 
vertices of a given polygon class.

\begin{proposition} \label{thm:regularity} 
Let $e$ be a critical number, and let $k$ be the length of the
vertex list of the corresponding polygon class.
Then the return map $\Phi$ on $\Xe$ is related to $\Psi$ via
\begin{equation} \label{eq:Phi_Psi}
\Phi(z) \equiv (\Psi^{2k}\circ F_{\lambda})(z) \mod{\mathbf{w}_{v_1,v_1}}, 
\hskip 40pt z\in\Xe
\end{equation}
where $v_1$ is the type of the first vertex and $\mathbf{w}_{v_1,v_1}$ 
is the value of the integrable vector field $\mathbf{w}$ at $z$.
\end{proposition}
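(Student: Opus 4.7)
The plan is to exploit the explicit parameterisation of the $4(2k-1)$ vertices of $\mathcal{O}_\tau(z)$ recorded just before the proposition, identify the last vertex encountered in forward time, and argue that $\Phi(z)$ and $\Psi^{2k}(F_\lambda(z))$ lie on the same integral segment of the constant field $\mathbf{w}_{v_1,v_1}$ inside $B_{v_1,v_1}$. First, since $z\in\Xe$ is regular, Proposition \ref{prop:Xe_tilde} forces $\mathcal{O}_\tau(z)$ to be non-critical, so Theorem \ref{thm:Polygons} pins its vertex count to exactly $4(2k-1)$; moreover $z\in B_{v_1,v_1}\setminus\Lambda$, and by Lemma \ref{thm:Lambda} the discrete field coincides with the constant $\mathbf{w}_{v_1,v_1}$ both at $z$ and along the final leg of the orbit within $B_{v_1,v_1}$.

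Next, invoking the parameterisation displayed just above the statement, the full set of vertices equals the disjoint union $\bigcup_{i=0}^{3}\bigcup_{j=1}^{2k-1}\{(\Psi^j\circ F_\lambda^i)(z)\}$. Since $\Psi$ advances clockwise (along the integrable flow) while $F_\lambda$ is approximately a counter-clockwise rotation by $\pi/2$, in forward time the four quarter-turns are visited in the order $i=0,3,2,1$: the point $F_\lambda(z)$ sits on the starting line of the fourth quarter-turn, $F_\lambda^2(z)$ on that of the third, and $F_\lambda^3(z)$ on that of the second. The last vertex of $\mathcal{O}_\tau(z)$ in forward time is therefore $\Psi^{2k-1}(F_\lambda(z))$.

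Once the orbit passes this vertex it has closed its clockwise loop back into $B_{v_1,v_1}$, and it advances by $\mathbf{w}_{v_1,v_1}$ at every $F_\lambda^4$-step until it first falls in $X$, producing $\Phi(z)$. Applying the strip map once more to the final vertex yields $\Psi^{2k}(F_\lambda(z))$, the first vertex of the subsequent revolution; by equation (\ref{eq:Psi_congruence2}) we have $\Psi^{2k}(F_\lambda(z))=\Psi^{2k-1}(F_\lambda(z))+t\,\mathbf{w}_{v_1,v_1}$ for some $t\in\N$, while analogously $\Phi(z)=\Psi^{2k-1}(F_\lambda(z))+t'\,\mathbf{w}_{v_1,v_1}$ for some integer $0\le t'\le t$. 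The desired congruence follows at once.

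The main obstacle is the rigorous justification of the CW-ordered vertex parameterisation in the second paragraph: one must verify that the four families $\{\Psi^j(F_\lambda^i(z))\}_{j=1}^{2k-1}$ are pairwise disjoint, that each exhausts exactly one quarter-turn of the polygon, and that their concatenation in the order $i=0,3,2,1$ reproduces the forward-time traversal. This combines the $D_4$ symmetry of $\Pi(z)$ from Theorem \ref{thm:Polygons} with the small deviation of $F_\lambda$ from an exact $\pi/2$ counter-clockwise rotation, and relies on regularity to exclude anomalous behaviour near the quarter-turn boundaries. Since this parameterisation is developed in the text immediately preceding the proposition, the formal proof essentially reduces to the congruence bookkeeping outlined above.
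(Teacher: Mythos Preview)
Your approach is essentially the paper's: identify the last vertex as $\Psi^{2k-1}(F_\lambda(z))\in\Lambda_{v_1,v_1}$ via the quarter-turn parameterisation, then show that $\Phi(z)$ and $\Psi^{2k}(F_\lambda(z))$ differ by a multiple of $\mathbf{w}_{v_1,v_1}$.

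There is one technical slip in your third paragraph. The point $\Psi^{2k-1}(F_\lambda(z))$ lies in $\Lambda$, so equation (\ref{eq:Psi_congruence2}) does \emph{not} apply to it; you must use (\ref{eq:Psi_congruence1}), which gives $\Psi^{2k}(F_\lambda(z))=\Psi^{2k-1}(F_\lambda(z))+\mathbf{v}(\Psi^{2k-1}(F_\lambda(z)))+(t-1)\mathbf{w}_{v_1,v_1}$, and by Proposition \ref{thm:epsilon_j} the term $\mathbf{v}$ may differ from $\mathbf{w}_{v_1,v_1}$ by a kick $\lambda\epsilon\,\mathbf{e}$. The same correction applies to your formula for $\Phi(z)$, so the two errors cancel and the congruence survives --- but your displayed formulas are not literally correct. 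The paper sidesteps this by applying (\ref{eq:Psi_congruence2}) directly to $\Phi(z)$, which \emph{does} lie in $B_{v_1,v_1}\setminus\Lambda$ by regularity, and noting that $\Psi(\Phi(z))=\Psi^{2k}(F_\lambda(z))$ because $\Phi(z)$ sits on the orbit segment between the last vertex and the next transition point.
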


\begin{proof} Let $z\in \Xe$.  By the preceding discussion,
the last vertex in the orbit ${\mathcal O}_{\tau}(z)$ is given by
\begin{displaymath}
 (\Psi^{2k-1}\circ F_{\lambda})(z) \in \Lambda_{v_1,v_1}.
\end{displaymath}
The point $\Phi(z)$ satisfies 
$\Phi(z)\in B_{v_1,v_1}\setminus \Lambda$.
Using the expression (\ref{eq:Psi_congruence2}) for $\Psi$ applied to $\Phi(z)$, we have
\begin{displaymath}
 (\Psi^{2k}\circ F_{\lambda})(z) \equiv \Phi(z) \mod{\mathbf{w}_{v_1,v_1}}
\end{displaymath}
as required.
\end{proof}

For $z\in X$ regular, we use the vertices 
$(\Psi^j(z))_{1\leq j \leq 2k-1}$ in the first quarter-turn to define a 
sequence of natural numbers $\sigma(z)$ called the \textbf{orbit code} of $z$, 
which encapsulates how the asymptotic orbit ${\mathcal O}_{\tau}(z)$ 
deviates from $\Pi(z)$.

Suppose the $j$th vertex of $\Pi(z)$ is a vertex of type $v_j$ lying on $y=n$,
and the orbit ${\mathcal O}_{\tau}(z)$ meets its corresponding vertex at $\Psi^j(z)$.
We define the pair $(x_j,y_j)$ via
\begin{equation}\label{eq:(xj,yj)}
 \Psi^j(z) = \lambda\left(\left\lceil\frac{v_j}{\lambda}\right\rceil + x_j,
 \left\lceil\frac{n}{\lambda}\right\rceil + y_j\right),
\end{equation}
where $x_j\geq 0$, and $|y_j|$, which is (essentially) the number of lattice points 
between $\Psi^j(z)$ and the line $y=n$, is small relative to $1/\lambda$. 
Using similar arguments to those in the proof of proposition 
\ref{prop:Xe_tilde} one can show that $y_j$ satisfies
\begin{displaymath}
 -(2v_j+1) \leq y_j <0 \hskip 20pt \mbox{or} \hskip 20pt 0\leq y_j < 2v_j+1,
\end{displaymath}
depending whether the integrable vector field is oriented in the positive or 
negative $y$-direction.
In both cases, the possible values of $y_j$ form a complete set of residues 
modulo $2v_j+1$. Hence the $j$th element $\sigma_j$ of the orbit 
code $\sigma(z)$ is defined to be the unique residue 
$\sigma_j\in\{0,1,\dots, 2v_j\}$ which is congruent to $y_j$:
\begin{equation} \label{eq:sigma_j}
 \sigma_j \equiv y_j \mod{2v_j+1}.
\end{equation}
We call $y$ the \textbf{integer coordinate} of the vertex and $x$ the 
\textbf{non-integer coordinate}.
Similarly, if the $j$th vertex lies on $x=m$, then the $j$th element $\sigma_j$ of 
the orbit code is defined to be the residue congruent to $x_j$ modulo $2v_j+1$. 
In this case $x$ is the integer coordinate and $y$ is the non-integer coordinate.

For all vertices in the first quadrant, the fact that orbits progress clockwise 
under the action of $F^4_{\lambda}$ means that $y_j$ will be non-negative 
wherever $y$ is the integer coordinate, and $x_j$ will be negative 
wherever $x$ is the integer coordinate:
\begin{equation} \label{eq:xj<0,yj>0}
 -(2v_j+1) \leq x_j <0 \hskip 20pt \mbox{or} \hskip 20pt 0\leq y_j < 2v_j+1. 
\end{equation}
Thus the value of $\sigma_j$ is given explicitly by
\begin{equation} \label{eq:sigma_j_2}
 \sigma_j = x_j + 2v_j+1 \hskip 20pt \mbox{or} \hskip 20pt \sigma_j = y_j, 
\end{equation}
respectively.

In addition to the values $\sigma_{j}$ for $1\leq j \leq 2k-1$ we consider $\sigma_{-1}$, 
which corresponds to the last vertex \textit{before} the symmetry line, i.e., to the point $\Psi^{-1}(z)$. 
Thus the orbit code of $z$ is a sequence $\sigma(z)=(\sigma_{-1},\sigma_1,\dots,\sigma_{2k-1})$, such that
\begin{align*}
 & 0\leq \sigma_{-1} < 2v_1+1, \\
 & 0\leq \sigma_j < 2v_j+1, \hskip 20pt 1\leq j \leq 2k-1,
\end{align*}
where the $v_j$ are the vertex types.

In the next proposition we consider how a perturbed orbit behaves at
its vertices. We find that 
the regularity of $z$ ensures that the discrete vector field $\mathbf{v}$ 
matches the Hamiltonian vector field $\mathbf{w}$ in the integer coordinate 
at $\Psi^j(z)$. The possible discrepancy in the non-integer coordinate is 
determined by the value of $\sigma_j$.
\begin{proposition} \label{thm:epsilon_j}
Let $e$ be a critical number and let $k$ be the length of the
vertex list of the corresponding polygon class. 
For any $z\in \Xe$ and any $j\in\{-1,1,2,\ldots,2k-1\}$, 
let $m,n$ be such that $\Psi^j(z)\in\Lambda_{m,n}$.
Then the discrete vector field at the $j$th vertex of 
${\mathcal O}_{\tau}(z)$ is given by
\begin{displaymath}
\mathbf{v}(\Psi^j(z)) = \mathbf{w}_{m,n} + \lambda \epsilon_j(\sigma_j) \mathbf{e}, 
\end{displaymath}
where $\epsilon_j$ is a function of the $j$th entry $\sigma_j$ of the orbit code $\sigma(z)$ 
and $\mathbf{e}$ is the unit vector in the direction of the non-integer coordinate of the vertex.
\end{proposition}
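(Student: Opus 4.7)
The plan is to apply the explicit coordinate formulas for $F_\lambda^4$ derived in the proof of Lemma~\ref{thm:Lambda}. Writing $\Psi^j(z) = \lambda(X, Y) \in B_{m,n}$ and letting $a, b, c, d$ be the integers defined by (\ref{eq:abcd}) at this point, equation (\ref{eq:v_abcd}) gives
\begin{displaymath}
\mathbf{v}(\Psi^j(z)) - \mathbf{w}_{m,n} = \lambda\bigl(a + c - 2n,\; m - b\bigr).
\end{displaymath}
The task reduces to showing that exactly one component on the right vanishes, and that the other is a function of $\sigma_j$ alone.

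I would split into two cases according to the orientation of the $j$-th vertex: either the orbit crosses a horizontal line $y = n$ (so $d = m$ and $c = n \pm 1$, and the non-integer coordinate at the vertex is $x$) or a vertical line $x = m$ (so $c = n$ and $d = m \pm 1$, and the non-integer coordinate is $y$). Proposition~\ref{prop:Xe_tilde} guarantees that $z$ is non-critical, ruling out simultaneous changes in both coordinates. In the horizontal case I would verify that $b = m$: since $z \in B_{m,n}$ gives $\lambda X \in [m, m+1)$, and the intermediate integer $a + 1 = \lceil \lambda(Y - m) \rceil$ is close to $n + 1$, the quantity $\lambda(X + a + 1)$ lies in $(m, m+1)$ for small $\lambda$, so its ceiling is $m + 1$. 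Hence the $y$-component of the deviation vanishes, and the discrepancy is purely in the $x$-direction, i.e.\ in the direction of the non-integer coordinate $\mathbf{e} = (\pm 1, 0)$. The vertical-crossing case is symmetric and yields $a = n$, pinning down the $x$-component instead.

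The remaining task is to compute $a - n$ (respectively $b - m$) as a function of $\sigma_j$. Using the parametrisation $Y = \lceil n/\lambda \rceil + y_j$ from (\ref{eq:(xj,yj)}) and $m = v_j$, one rewrites $a + 1 = \lceil \lambda(Y - m) \rceil$ as $n + \lceil \alpha + \lambda(y_j - v_j) \rceil$, where $\alpha = \lambda \lceil n/\lambda \rceil - n \in [0, \lambda)$. For sufficiently small $\lambda$ and $|y_j| \leq 2v_j$, this ceiling takes one of only two integer values, decided by an integer inequality in $y_j$. Since $\sigma_j = y_j$ in the horizontal case (and $\sigma_j \equiv x_j \pmod{2v_j+1}$ in the vertical case, by (\ref{eq:sigma_j}) and (\ref{eq:sigma_j_2})), the deviation has the claimed form $\lambda \epsilon_j(\sigma_j)\mathbf{e}$. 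The main obstacle will be verifying that the $\lambda$- and $n$-dependent threshold $\alpha$ does not contaminate the output of these integer comparisons: one must show that, across all sufficiently small $\lambda$ and all admissible boxes $(m,n)$ visited by orbits in $\tilde{X}^e$, the value of $\epsilon_j$ extracted from the ceiling depends only on the residue $\sigma_j$ and the intrinsic polygon-class data $v_j$. The boundary index $j = -1$, corresponding to the vertex immediately preceding the symmetry line $\Fix{G}$, is handled by the same argument applied to $F_\lambda^{-1}$ using reversibility.
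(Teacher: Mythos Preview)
Your approach is the same as the paper's, but there is a definitional slip at the outset. You write $\Psi^j(z)\in B_{m,n}$, but by definition $\Lambda_{m,n}=F_\lambda^{-4}(B_{m,n})\setminus B_{m,n}$, so $\Psi^j(z)\in\Lambda_{m,n}$ means $\Psi^j(z)$ lies in an \emph{adjacent} box while it is the image $F_\lambda^4(\Psi^j(z))$ that lies in $B_{m,n}$. Hence $(d,c)=(m,n)$ always, and your case split should be on the source box (e.g., $\Psi^j(z)\in B_{m,n+1}$ for a horizontal crossing). The field $\mathbf{w}_{m,n}$ in the statement is thus the \emph{destination} field, not the field in the box containing $\Psi^j(z)$; your deviation formula is taken from the wrong reference and differs from the correct one by $\pm2\lambda$ in the non-integer direction. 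Once this is fixed the computation coincides with the paper's: with source box $(m,n+1)$ and $c=n$, $d=m$, one checks $b=m$ and obtains $\mathbf{v}(w)-\mathbf{w}_{m,n}=\lambda(a-n)(1,0)$ with $a-n\in\{0,1\}$.

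On the obstacle you flag: the paper writes $y=\lceil(n+1)/\lambda\rceil+\sigma_j$ and reduces the ceiling condition to the integer comparison $\sigma_j>m$, so $\epsilon_j=1$ iff $\sigma_j>m$; since here $m$ equals the vertex type $v_j$, the threshold is class-intrinsic. Note also that the proposition only asserts $\epsilon_j$ is a function of $\sigma_j$ for fixed $\lambda$ and $j$, so a residual $\lambda$-dependent threshold would not contradict the statement. Finally, your justification that $b=m$ via ``$\lambda(X+a+1)\in(m,m+1)$'' is incomplete when $\lambda X$ is near $m$ or $m+1$; the paper also asserts $b=m$ tersely, but what rules out those corner configurations is the regularity hypothesis $\Psi^j(z)\notin\Sigma$. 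No separate reversibility argument for $j=-1$ is needed.
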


\begin{proof}
If $z\in X$ is regular, then by proposition \ref{prop:Xe_tilde} the perturbed orbit 
${\mathcal O}(z)$ is not critical. 
Thus for any vertex $w$, which, by construction, satisfies
\begin{displaymath}
 w\in\Lambda_{m,n}
 \qquad
 F_{\lambda}^4(w) = w+\mathbf{v}(w)\in B_{m,n} 
\end{displaymath}
for some $m,n\in\Z$, we must have either $w\in B_{m,n\pm1}$ or $w\in B_{m\pm1,n}$. 
For definiteness we suppose that $w\in B_{m,n+1}$, so that the vertex $w$ lies on $y=n+1$. 
The cases where $w\in B_{m,n-1}$ or $w\in B_{m\pm1, n}$ are similar.

Now the proof proceeds very much as that of proposition \ref{prop:Xe_tilde}. 
The perturbed vector field $\mathbf{v}(w)$, with $R(w)=(m,n+1)$,  
is given by equation (\ref{eq:vw}), with $a,b,c,d$ as in (\ref{eq:abcd}). 
In this case, $R(F_{\lambda}^4(w))=(m,n)$ implies that
\begin{displaymath}
 c= n \hskip 20pt \mbox{and} \hskip 20pt d =m,
\end{displaymath}
and according to (\ref{eq:bd_ac_sets}), the remaining integers $a$ and $b$ satisfy
\begin{displaymath}
 a\in \{n,n+1\}, \hskip 40pt b=m.
\end{displaymath}
Thus we have
\begin{align}
 \mathbf{v}(w) &= \lambda(n+a+1,-(2m+1)) \label{eq:v(Psi^j)} \\
 &= \mathbf{w}_{m,n} + \lambda(a-n)\mathbf{e}, \nonumber
\end{align}
where $\mathbf{e}=(1,0)$ is the unit vector in the $x$-direction, the non-integer 
coordinate direction of the vertex.

If $w=\Psi^j(z)$, then the coefficient of the difference between 
$\mathbf{v}(w)$ and $\mathbf{w}_{m,n}$ in the $x$-direction is given by
\begin{align*}
 \epsilon_j &= a-n \\
&= \lceil \lambda(y-m)\rceil -(n+1) \\
&= \left\{ \begin{array}{ll} 1 \; \; & \lambda y-(n+1) >\lambda m \\
            0 \; \; & \mbox{otherwise}.
           \end{array} \right.
\end{align*}
As in equation (\ref{eq:(xj,yj)}), we write
\begin{displaymath}
 y = \left\lceil \frac{n+1}{\lambda} \right\rceil + y_j,
\end{displaymath}
where $y_j$ satisfies the second inequality in (\ref{eq:xj<0,yj>0}).
Hence, by (\ref{eq:sigma_j_2}), we have $y_j=\sigma_j$, and 
the function $\epsilon_j=\epsilon_j(\sigma_j)$ is given by
\begin{displaymath}
 \epsilon_j = \left\{ \begin{array}{ll} 1 \; \; & \sigma_j > m \\
            0 \; \; & \mbox{otherwise}
           \end{array} \right.
\end{displaymath}
which completes the proof.
\end{proof}

Note that the function $\epsilon_j$ depends on $j$ via $m$.
In what follows we shall write $\epsilon_j$, omitting the argument.

Applying proposition \ref{thm:epsilon_j} to equation (\ref{eq:Psi_congruence1}), 
we have that if $\Psi^j(z)\in\Lambda_{m,n}$, then the transit between vertices satisfies
\begin{equation}\label{eq:Psi^jp1}
 \Psi^{j+1}(z) = \Psi^j(z) + \lambda \epsilon_j 
\mathbf{e} + t\mathbf{w}_{m,n}, 
\end{equation}
where $t=t(\Psi^j(z))$ is the transit time. 
Hence we think of an orbit as moving according to the integrable vector 
field at all points except the vertices, where there is a mismatch between
integrable and non-integrable dynamics, and points are given a small 
`kick' in the non-integer coordinate direction.

\section{Lattice structure and proof of main theorems} \label{sec:lattice}

In this section we prove theorems 
A 
and B, 
stated in section \ref{sec:MainTheorems}.

For $e\in\cE$, suppose the vertex list $V(e)=(v_1,\dots,v_k)$ 
contains $l$ distinct entries. We define the sequence 
$(\iota(j))_{1\leq j\leq l}$ such that the $\iota(j)$th entry in the vertex 
list is the $j$th distinct entry. Since all repeated entries are consecutive, 
it follows that the vertex list has the form
\begin{equation} \label{eq:v_iota}
 V(e) = (v_{\iota(1)},\dots,v_{\iota(1)},v_{\iota(2)},\dots, v_{\iota(2)}, \, 
     \dots \, ,v_{\iota(l)},\dots, v_{\iota(l)}), 
\end{equation}
with $v_{\iota(1)} = v_1$ and $v_{\iota(l)} = v_k$.
We define the vector $\mathbf{L}=\mathbf{L}(e,\lambda)$ as:
\begin{equation}
 \mathbf{L} = \frac{\lambda q}{2v_1+1} \; (1,1),
\end{equation}
where the natural number $q=q(e)$ is defined as follows
\begin{equation}\label{eq:q}
 q = \mbox{\rm lcm}((2v_{\iota(1)}+1)^2,(2v_{\iota(1)}+1)(2v_{\iota(2)}+1),
      \ldots ,(2v_{\iota(l-1)}+1)(2v_{\iota(l)}+1)).
\end{equation}
Here the least common multiple runs over $(2v_1+1)^2$ and all products 
of the form $(2v_j+1)(2v_{j+1}+1)$, where $v_j$ and 
$v_{j+1}$ are consecutive, distinct vertex types. 
Finally, the lattice $\Le=\Le(\lambda)\subset(\lambda\Z)^2$ of 
theorem A 
is given by
\begin{equation}\label{eq:Le}
 \Le = \left\langle \mathbf{L}, \frac{1}{2}\left(\mathbf{L}-\mathbf{w}_{v_1,v_1}\right) \right\rangle,
\end{equation}
where $\langle \cdots \rangle$ denotes the $\Z$-module generated by a set 
of vectors, and the vector $\mathbf{w}_{v_1,v_1}$ given by (\ref{eq:w_mn}) 
is the Hamiltonian vector field $\mathbf{w}$ in the domain $\Xe$ (figure \ref{fig:lattice_Le}).

\begin{figure}[ht]
	\centering
        \vspace*{-80pt}
        \includegraphics[scale=0.7]{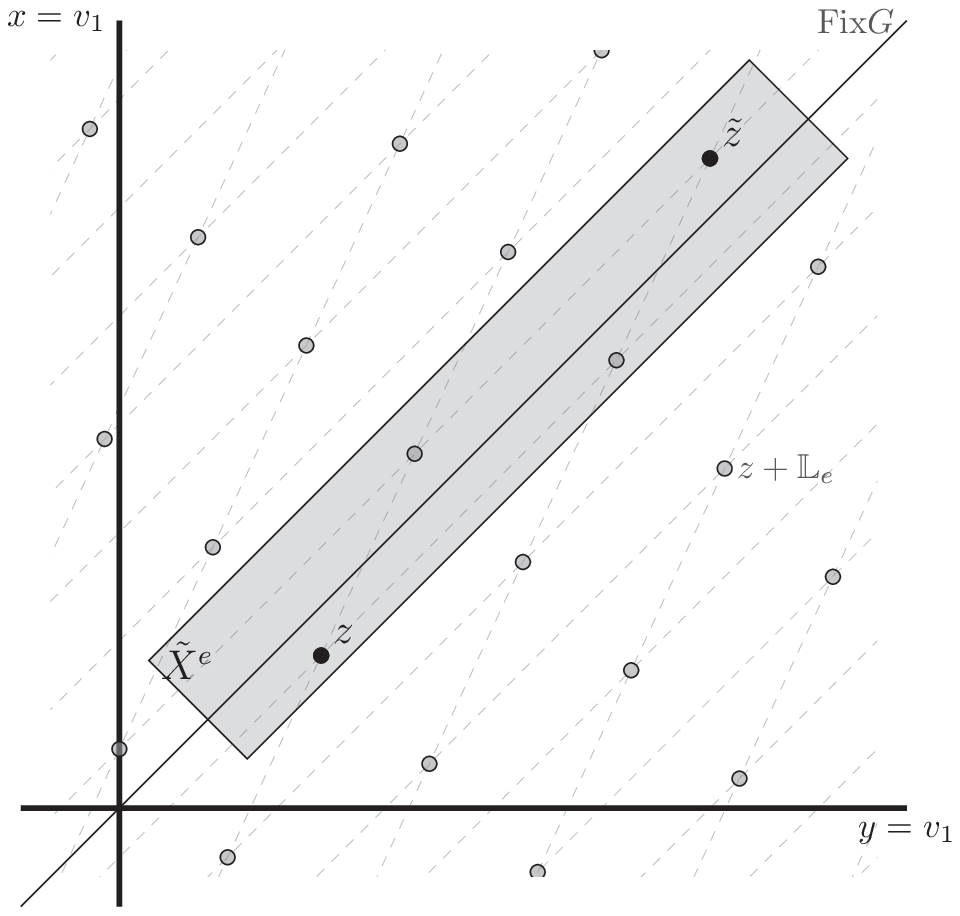}
        \vspace*{-280pt}
	\caption{The lattice $\Le$.}
	\label{fig:lattice_Le}
\end{figure}

Any $z,\tZ \in \Xe$ which are congruent modulo $\Le$ are related by:
\begin{displaymath}
 \tZ = z + \frac{1}{2}\left( (2a+b) \mathbf{L} -b\mathbf{w}_{v_1,v_1}\right),
\end{displaymath}
where $a,b\in\Z$ are the coordinates of $\tZ-z$ relative to the module basis. 
We note that the vector $\mathbf{L}$ is parallel to the symmetry line $\Fix{G}$, and 
hence parallel to the strip $\Xe$, whereas the vector $\mathbf{w}_{v_1,v_1}$ is perpendicular to it. 
It follows that both $a$ and $b$ are determined uniquely by the value of the coefficient $2a+b$, 
because if $z=\lambda(x,y)$, then
\begin{align*}
 x\geq y \; \; &\Rightarrow \; \; b\in\{0,1\}, \\
 x<y \; \; &\Rightarrow \; \; b\in\{-1,0\}.
\end{align*}
The point $z$ itself corresponds to $a=b=0$.

We prove theorems A 
\& B 
via several lemmata. 
The first and most significant step is to show that the orbit codes $\sigma(z)$ of points $z\in \Xe$ are in 
one-to-one correspondence with the equivalence classes of $\Xe$ modulo $\Le$. 
We do this by constructing a sequence of nested lattices whose congruence classes are the 
cylinder sets of the orbit code.

We define recursively a finite integer sequence $(q_j)$, 
$j=1,\ldots,2k-1$, as follows:
\begin{align}
   q_1&=(2v_1+1)^2\nonumber \\
   q_j&=\begin{cases}
         q_{j-1}&\mbox{if}\,\, v_j=v_{j-1}\\
         \mbox{\rm lcm}((2v_j+1)(2v_{j-1}+1),q_{j-1}) &\mbox{if}\,\,\, v_j\neq v_{j-1}
        \end{cases}
      &j>1. \label{eq:q_j}
\end{align}
Then we let
\begin{equation}\label{eq:p_j}
p_j=q_j/(2v_j+1)\qquad j=1,\ldots,2k-1.
\end{equation}
By construction, $p_j$ is also an integer.
After defining the associated sequence of vectors 
\begin{equation} \label{eq:L_j}
 \mathbf{L}_j = \frac{\lambda q_j}{2v_1+1} \; (1,1), 
\end{equation}
we let the lattices $\Le_j$ be the $\Z$-modules with basis
\begin{equation} \label{eq:lattice_j}
 \Le_j =\left\langle \mathbf{L}_j, 
   \frac{1}{2}\left(\mathbf{L}_j -\mathbf{w}_{v_1,v_1}\right) \right\rangle. 
\end{equation}
By construction
\begin{displaymath}
 \Le_{2k-1} \subseteq \Le_{2k-2} \subseteq \dots \subseteq \Le_1 \subset (\lambda\Z)^2.
\end{displaymath}

We claim that for all $1\leq j \leq 2k-1$, the closed form expression for $q_j$ is given by
\begin{equation}
 q_j = \mbox{\rm lcm}((2v_{\iota(1)}+1)^2,(2v_{\iota(1)}+1)(2v_{\iota(2)}+1),
  \dots,(2v_{\iota(i-1)}+1)(2v_{\iota(i)}+1)), \label{eq:q_j_closed_form}
\end{equation}
where $i$ is the number of distinct entries in the list $(v_1,v_2,\dots,v_j)$. 
That the lowest common multiple (\ref{eq:q_j_closed_form}) runs over all products 
$(2v_j+1)(2v_{j+1}+1)$ of consecutive, distinct vertex types follows from the form 
(\ref{eq:v_iota}) of the vertex list and the symmetry (\ref{eq:v_symmetry}) of the vertex types. 
Furthermore, since all distinct vertex types occur within the first $k$ vertex types, 
the expression (\ref{eq:q_j_closed_form}) implies that the sequence $(q_j)$ is eventually 
stationary:
\begin{equation}
 q_j = q, \hskip 20pt \Le_j = \Le, \hskip 20pt k\leq j \leq 2k-1, \label{eq:q_j=q}
\end{equation}
where $q$ and $\Le$ are given by equations (\ref{eq:q}) and (\ref{eq:Le}). 

For given $e$, the following result details the role of the $\Le_j$ as cylinder sets of the orbit code. 
Applying the result for $j=2k-1$, along with the observation (\ref{eq:q_j=q}), implies that two points 
share the same orbit code if and only if they are congruent modulo $\Le$.

\begin{lemma} \label{thm:sigma_lattices}
Let $e$ be a critical number, let $k$ be the length of the vertex list of the 
corresponding polygon class, and let $p_j$ and $\Le_j$ be as above.
For any $1\leq j \leq 2k-1$ and all $z,\tZ\in \Xe$, the following three statements are equivalent:
\begin{enumerate}[(i)]
 \item the orbit codes of $z$ and $\tZ$ match up to the $j$th entry,
 \item $z$ and $\tZ$ are congruent modulo $\Le_j$,
 \item the points $\Psi^j(z)$ and $\Psi^j(\tZ)$ are congruent modulo 
$\lambda p_j\mathbf{e}$, where $\mathbf{e}$ is the unit vector in the 
direction of the non-integer coordinate of the $j$th vertex.
\end{enumerate}
\end{lemma}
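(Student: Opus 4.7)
The plan is to prove the three-way equivalence by strong induction on $j$, propagating all three statements through the argument in parallel. The dynamical engine at each step is the formula
\begin{equation*}
\Psi^{j+1}(z) - \Psi^{j+1}(\tZ) = \bigl(\Psi^j(z) - \Psi^j(\tZ)\bigr) + \lambda\bigl(\epsilon_j(z) - \epsilon_j(\tZ)\bigr)\mathbf{e} + \bigl(t(\Psi^j(z)) - t(\Psi^j(\tZ))\bigr)\mathbf{w}_{m,n},
\end{equation*}
obtained from (\ref{eq:Psi^jp1}), combined with Proposition \ref{thm:epsilon_j}, which makes the kick $\epsilon_j$ a function only of $\sigma_j$: once the first $j$ entries of the orbit code coincide the kick term drops out, and the comparison of $\Psi^{j+1}(z)$ and $\Psi^{j+1}(\tZ)$ reduces to an affine expression in $\Psi^j(z) - \Psi^j(\tZ)$ and a single integer $\Delta t$.

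For the base case $j=1$, the set $\Xe \subset B_{v_1,v_1}\setminus\Lambda$ lies in a single box on which, by (\ref{eq:Psi_congruence2}), $\Psi$ acts as an integer translation by $\mathbf{w}_{v_1,v_1}$. A direct calculation using the residue definition (\ref{eq:sigma_j}) shows that matching $\sigma_{-1}$ and $\sigma_1$ forces the integer coordinates of $\Psi^{\pm 1}(z)$ and $\Psi^{\pm 1}(\tZ)$ to agree exactly while the non-integer coordinates may differ by a multiple of $\lambda(2v_1+1) = \lambda p_1$, giving (i)$\Leftrightarrow$(iii). Pulling this one-dimensional congruence back through $\Psi$ and decomposing the transverse shift $\lambda p_1$ along $(1,1)$ into components parallel and perpendicular to $\mathbf{w}_{v_1,v_1}$ produces precisely the basis $\{\mathbf{L}_1,\tfrac12(\mathbf{L}_1-\mathbf{w}_{v_1,v_1})\}$ of $\Le_1$, establishing (i)$\Leftrightarrow$(ii).

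For the inductive step, assume (i)$\Leftrightarrow$(ii)$\Leftrightarrow$(iii) at level $j$. If the orbit codes agree up to level $j+1$, the displayed formula collapses to $\Psi^{j+1}(z)-\Psi^{j+1}(\tZ) = \lambda k p_j\, \mathbf{e} + \Delta t\, \mathbf{w}_{m,n}$ for integers $k,\Delta t$, where $\mathbf{e}$ is the non-integer direction at the $j$th vertex --- equivalently the \emph{integer} direction at the $(j+1)$th vertex, since consecutive vertex types swap between horizontal and vertical lines. Matching $\sigma_{j+1}$ then forces this integer coordinate to agree exactly, which pins $\Delta t$ modulo $2v_{j+1}+1$ as a linear function of $k$. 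Substituting back, the residual in the new non-integer direction becomes a multiple of $\lambda(2v_j+1)(2v_{j+1}+1)$; combined with the inherited multiple of $\lambda p_j$ this yields a multiple of $\lambda p_{j+1}$, matching exactly the lcm recursion (\ref{eq:q_j}). The equivalence (ii)$\Leftrightarrow$(iii) propagates because the displayed formula is affine, and the inversion $z \leftrightarrow \Psi^j(z)$ preserves the one-dimensional congruence structure up to the decomposition of $\mathbf{L}_j$ already used in the base case.

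The main obstacle is the stationary case $v_j = v_{j-1}$, in which (\ref{eq:q_j}) prescribes $q_j = q_{j-1}$ and no lattice refinement occurs: geometrically the two consecutive vertices of equal type lie on parallel integer lines swept by a common $\mathbf{w}_{m,n}$, so the inductive constraint must pass through unchanged, which requires a separate verification that matching $\sigma_j$ imposes no new modular constraint beyond the ones already carried. A secondary subtlety is keeping track of the signed orientation of $\mathbf{e}$ as the orbit crosses octants and handling the reflected residue (\ref{eq:sigma_j_2}); the arithmetic is routine but the sign conventions in (\ref{eq:v_abcd}) and (\ref{eq:xj<0,yj>0}) have to be threaded through the induction carefully. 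Once all of this is in place, applying the lemma at $j=2k-1$ together with the stationarity (\ref{eq:q_j=q}) immediately yields the hypothesis for Theorem~A.
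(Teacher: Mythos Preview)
Your proposal is correct and follows essentially the same inductive approach as the paper's proof: a base case at $j=1$ identifying $\Le_1$ with $(\lambda(2v_1+1)\Z)^2$, then an inductive step split into the parallel case $v_{j+1}=v_j$ (transit times coincide, so the displacement $\Psi^j(\tZ)-\Psi^j(z)$ passes through unchanged and no lattice refinement occurs) and the perpendicular case $v_{j+1}\neq v_j$ (solving an integer equation of the form $A(2a+b)+B\tilde t=C$, whose solution set produces the lcm recursion for $q_{j+1}$). One caution on wording: your assertion that ``consecutive vertex types swap between horizontal and vertical lines'' is precisely what distinguishes the perpendicular case and fails in the parallel one---this is why the stationary case you flag as the ``main obstacle'' genuinely requires its own verification, as the paper's Case~1 does.
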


\begin{proof} Let $e\in\mathcal{E}$, let $\Le_j$ be as above, and let 
$z,\tilde z\in \Xe$ with orbit codes 
$(\sigma_{-1},\sigma_1,\dots,\sigma_{2k-1})$, and
$(\tilde\sigma_{-1},\tilde\sigma_1,\dots,\tilde\sigma_{2k-1})$, respectively. 
We proceed by induction on $j$, with two induction hypotheses. 
Firstly we suppose that $(i)$ is equivalent to $(ii)$, so that for any 
$1\leq j\leq 2k-1$: 
\begin{equation}
(\sigma_{-1},\sigma_1,\dots,\sigma_j)=
(\tilde\sigma_{-1},\tilde\sigma_1,\dots,\tilde\sigma_j)
\quad\Leftrightarrow\quad
 \tZ \equiv z \mod{\Le_j}. \tag{H1}\label{eq:H1}
\end{equation}
Thus any such $\tZ$ is related to $z$ via
\begin{equation}
 \tZ = z + \frac{1}{2}\left( (2a+b) \mathbf{L}_j -b\mathbf{w}_{v_1,v_1}\right), \label{eq:z_tilde}
\end{equation}
for $a\in\Z$ and $b\in\{0,1\}$ or $b\in\{0,-1\}$, as appropriate. 
Secondly, we suppose that $(ii)$ is equivalent to $(iii)$. 
In particular,
\begin{equation} \tag{H2}\label{eq:H2}
\tZ = z + \frac{1}{2}\left( (2a+b) \mathbf{L}_j -b\mathbf{w}_{v_1,v_1}\right)
\quad\Leftrightarrow\quad
\Psi^j(\tZ) = \Psi^j(z) + \lambda(2a +b) p_j \mathbf{e}, 
\end{equation}
where $\mathbf{e}$ is the unit vector in the direction of the non-integer coordinate of that vertex.

We begin with the base case $j=1$. 
Suppose that the first vertex of a polygon in class $e$ lies on $y=v_1$, 
so that $y$ is its integer coordinate (if $x$ is the integer coordinate, 
then the analysis is identical). 
By symmetry, the previous vertex lies on $x=v_1$ and its integer coordinate is $x$.
Using the properties of $\Psi$ given in equations (\ref{eq:Psi_congruence2}) and 
(\ref{eq:Psi_congruence1}), applied to $z\in B_{v_1,v_1}\setminus\Lambda$ and 
$\Psi^{-1}(z)\in\Lambda_{v_1,v_1}$ respectively, we have
\begin{align}
  \Psi(z) &\equiv z \mod{\mathbf{w}_{v_1,v_1}}, \label{eq:Psi_congruence4} \\
  \Psi^{-1}(z) + \mathbf{v}(\Psi^{-1}(z)) &\equiv  z \mod{\mathbf{w}_{v_1,v_1}}. \label{eq:Psi_congruence3}
\end{align}
Furthermore by proposition \ref{thm:epsilon_j}:
\begin{align*}
 \mathbf{v}(\Psi^{-1}(z)) = \mathbf{w}_{v_1,v_1} +\lambda\epsilon_{-1}\mathbf{e},
\end{align*}
where $\mathbf{e}=(0,1)$ is the non-integer coordinate vector for the $(-1)$th vertex. 
Thus if $z=\lambda(\lceil v_1/\lambda\rceil+x,\lceil v_1/\lambda\rceil+y)$, 
by the definition (\ref{eq:sigma_j}) of the orbit code,
the $x$- and $y$-components of equations (\ref{eq:Psi_congruence3}) and (\ref{eq:Psi_congruence4}), 
respectively, give us that the first two entries in the orbit code $\sigma(z)$ satisfy
\begin{align*}
 x \equiv \sigma_{-1} \mod{2v_1+1}, \\
 y \equiv \sigma_1 \mod{2v_1+1}.
\end{align*}
It follows that $z,\tZ\in \Xe$ share the partial code $(\sigma_{-1},\sigma_1)$ if and only if
\begin{displaymath}
 \tZ \equiv z \mod{(\lambda(2v_1+1)\Z)^2}.
\end{displaymath}
The lattice $\Le_1$ is given by (cf.~(\ref{eq:lattice_j}))
\begin{displaymath}
\Le_1 = \left\langle \mathbf{L}_1, \frac{1}{2} \left(\mathbf{L}_1 -\mathbf{w}_{v_1,v_1}\right) \right\rangle, 
\end{displaymath}
where $\mathbf{L}_1=\lambda p_1(1,1)$, $p_1=2v_1+1$ and
\begin{displaymath}
 \frac{1}{2} \left(\mathbf{L}_1 -\mathbf{w}_{v_1,v_1}\right) = \frac{\lambda}{2} (p_1-p_1,p_1+p_1) = \lambda p_1\mathbf{e}.
\end{displaymath}
Thus $\Le_1=(\lambda(2v_1+1)\Z)^2$ and the first hypothesis holds.

Now let $z,\tZ\in \Xe$ satisfy (\ref{eq:z_tilde}) with $j=1$.
If $\Psi(z) = F_{\lambda}^{4t}(z) = z +t \mathbf{w}_{v_1,v_1}$, where $t\in\N$ is the 
transit time to $\Lambda$, then the identities
\begin{align*}
\tZ + (t+ a +b)\mathbf{w}_{v_1,v_1}
&= \Psi(z) + \frac{1}{2} (2a+b) \left(\mathbf{L}_1 +\mathbf{w}_{v_1,v_1}\right)  \\
&= \Psi(z) + \lambda(2a +b)p_1 \mathbf{e}
\end{align*}
with $\mathbf{e}=(1,0)$ show that $\tZ$ has transit time $t+a+b$ ,
and therefore $\Psi(\tZ) = \Psi(z) + \lambda(2a +b)p_1 \mathbf{e}$, as required
(see figure \ref{fig:Psi_diagram}).
This completes the basis for induction.

\begin{figure}[ht]
        \centering
        \vspace*{-80pt}
        \includegraphics[scale=0.7]{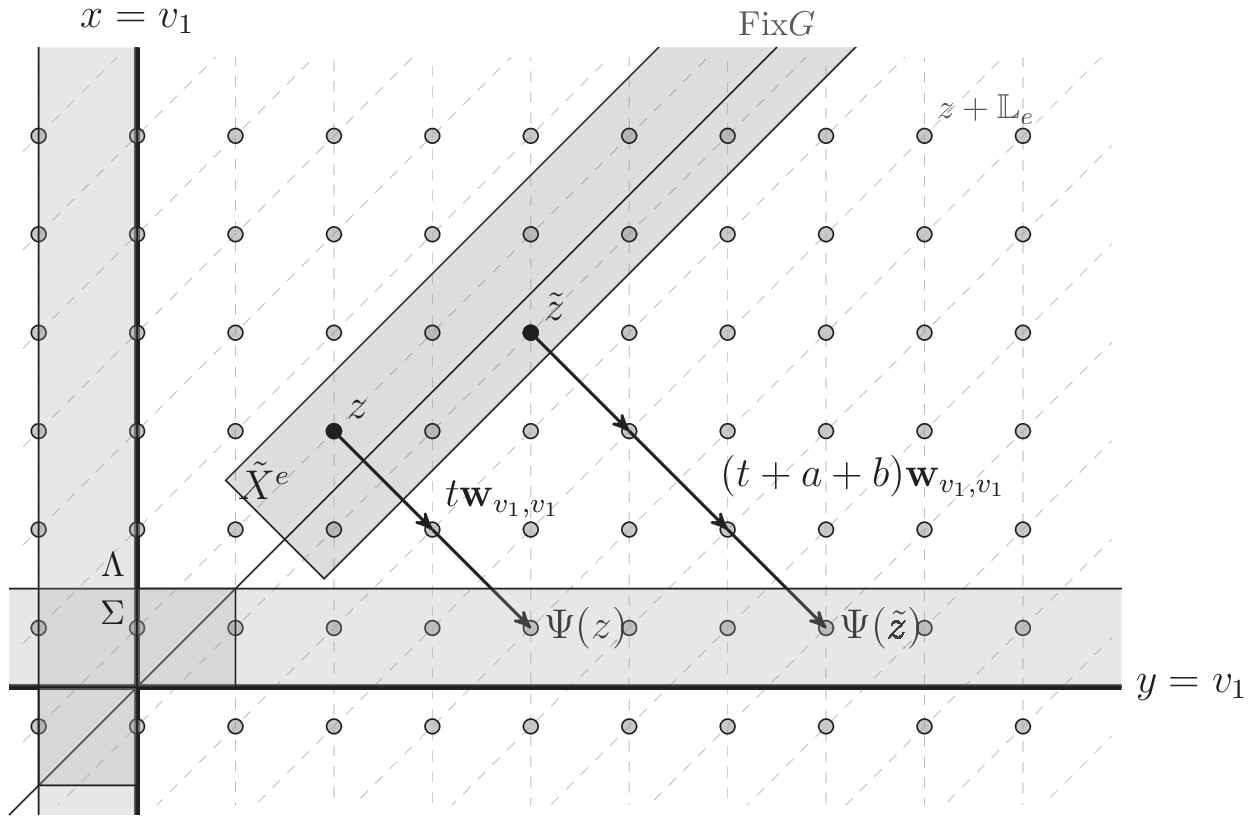}
        \vspace*{-300pt}
        \caption{The points $z$, $\tZ$, $\Psi(z)$ and $\Psi(\tZ)$. }
        \label{fig:Psi_diagram}
\end{figure}

Now we proceed with the inductive step. 
Let (\ref{eq:H1}) and (\ref{eq:H2}) hold for some $j\geq 1$. 
Then $z$ and $\tZ$ are related as in equation (\ref{eq:z_tilde}), for some $a,b$. 
We think of $\tilde{\sigma}_{j+1}$, the $(j+1)$th entry of the orbit code of $\tZ$,
as a function of $(a,b)$. We suppose that the $j$th vertex lies 
on $y=n$ for some $n\in\Z$ (again the case in which the vertex lies on $x=m$ is identical). 
Let the pair $(x_j,y_j)$ be defined from $\Psi^j(z)$ via equation (\ref{eq:(xj,yj)}). 
Similarly, $\Psi^j(\tZ)$ defines the pair $(\tilde{x}_j,\tilde{y}_j)$.

By (\ref{eq:H2}), $\Psi^j(\tZ)$ satisfies
\begin{displaymath}
 \Psi^j(\tZ) = \Psi^j(z) + \lambda(2a +b) p_j \mathbf{x},
\end{displaymath}
where $\mathbf{x}=\mathbf{e}=(1,0)$. 
Combining this expression with equation (\ref{eq:Psi^jp1}), 
applied to $\Psi^j(\tZ)\in\Lambda_{v_j,n-1}$, we obtain
\begin{align}
 \Psi^{j+1}(\tZ) &= \Psi^j(\tZ) + \lambda\epsilon_j \mathbf{x} + \tilde t \mathbf{w}_{v_j,n-1} \nonumber \\
 &= \Psi^j(z) + \lambda(2a +b) p_j\mathbf{x} + \lambda\epsilon_j \mathbf{x} + \tilde t  \mathbf{w}_{v_j,n-1}. 
 \label{eq:Psi^jp1tilde}
\end{align}
where $\tilde t$ is the transit time of $\tilde z$ to $\Lambda$.

There are now two cases to consider. \\

\noindent
\textit{Case 1: $v_j=v_{j+1}$.}

In this case the $j$th and $(j+1)$th vertices lie on parallel lines, 
which we take to be $y=n$ and $y=n-1$, so $\Psi^{j+1}(z)$ is given by
\begin{align*}
 \Psi^{j+1}(z) = \lambda\left(\left\lceil\frac{v_j}{\lambda}\right\rceil + x_{j+1},
       \left\lceil\frac{n-1}{\lambda}\right\rceil + y_{j+1}\right),
\end{align*}
and similarly for $\Psi^{j+1}(\tZ)$. 
According to the definitions (\ref{eq:p_j}) and (\ref{eq:lattice_j}), 
we have $p_j=p_{j+1}$ and $\Le_j=\Le_{j+1}$. 
Thus, to show that (\ref{eq:H1}) continues to hold, we need to show that $\tilde{\sigma}_{j+1}=\sigma_{j+1}$ for all $(a,b)$. 
Similarly we need to show that the vector $\Psi^{j+1}(\tZ)-\Psi^{j+1}(z)$ 
is equal to the vector $\Psi^{j}(\tZ)-\Psi^{j}(z)$ of hypothesis (\ref{eq:H2}).

Because $y$ is the integer coordinate of both $j$th and $(j+1)$th vertices,
the transit time is the same for the orbits of $z$ and $\tilde z$. 
Therefore equations (\ref{eq:Psi^jp1}) 
and (\ref{eq:Psi^jp1tilde}) with $\tilde t =t$ give us that
\begin{align*}
 \Psi^{j+1}(\tZ) &= \Psi^j(z) + \lambda(2a +b) p_j\mathbf{x} 
     + \lambda\epsilon_j \mathbf{x} + t  \mathbf{w}_{v_j,n-1}, \\
 &= \Psi^{j+1}(z) + \lambda(2a +b) p_j\mathbf{x},
\end{align*}
and the second hypothesis (\ref{eq:H2}) remains satisfied. 
Furthermore, $\Psi^{j+1}(\tZ)$ and $\Psi^{j+1}(z)$ have the same integer 
($y$) coordinate. It follows that, by the definition (\ref{eq:sigma_j}) 
of the orbit code, $\tilde \sigma_{j+1}=\sigma_{j+1}$ and (\ref{eq:H1}) 
is also satisfied.

By the $y$-component of (\ref{eq:Psi^jp1tilde}), the value of ${\sigma}_{j+1}$ 
is determined explicitly by the congruences
\begin{equation} \label{eq:sigma_j+1_case1}
 \left\lceil\frac{n-1}{\lambda}\right\rceil + {\sigma}_{j+1} \equiv
     \left\lceil\frac{n}{\lambda}\right\rceil + \sigma_j \mod{2v_j+1}.  
\end{equation}
Equation (\ref{eq:sigma_j+1_case1}) shows that, if $v_j=v_{j+1}$, then there is a map
$\sigma_{j}\mapsto\sigma_{j+1}$, which is a bijection of a set of congruence classes.

\bigskip

\noindent
\textit{Case 2: $v_j\neq v_{j+1}$.}

In this case the $j$th and $(j+1)$th vertices lie on perpendicular lines.
We take these to be the lines $y=n$ and $x=v_j+1$, respectively, so that 
$v_{j+1}=n-1$ and $\Psi^{j+1}(z)$ is given by
\begin{align*}
 \Psi^{j+1}(z) = \lambda\left(\left\lceil\frac{v_j+1}{\lambda}\right\rceil 
    + x_{j+1},\left\lceil\frac{n-1}{\lambda}\right\rceil + y_{j+1}\right).
\end{align*}
(If $x$ is the integer co-ordinate, then the analysis is identical.)
We shall demonstrate the form of $\Le_{j+1}$ by identifying those pairs 
$(a,b)$ for which $\tilde{\sigma}_{j+1}=\sigma_{j+1}$.

Taking the $x$-coordinate of equation (\ref{eq:Psi^jp1tilde}), and recalling the explicit form 
(\ref{eq:sigma_j_2}) of the orbit code, we see that $\tilde{\sigma}_{j+1}$ is determined by
\begin{equation} \label{eq:t,2a+b_eqn}
 \left\lceil\frac{v_j}{\lambda}\right\rceil + x_j + (2a+b)p_j +\epsilon_j +\tilde t(2v_{j+1}+1) 
     = \left\lceil\frac{v_j+1}{\lambda}\right\rceil +\tilde{\sigma}_{j+1} - (2v_{j+1}+1). 
\end{equation}
We think of this as an integer equation of the form $A(2a+b)+B\tilde t=C$, which has solutions 
$2a+b\in\Z$ and $\tilde t\in\N$ for some given value of $\tilde{\sigma}_{j+1}$ if and only if
\begin{displaymath}
 C = \left\lceil\frac{v_j+1}{\lambda}\right\rceil +\tilde{\sigma}_{j+1} 
   - (2v_{j+1}+1) - \left\lceil\frac{v_j}{\lambda}\right\rceil - x_j - \epsilon_j
\end{displaymath}
is sufficiently large and $C\equiv 0 \; ({\rm mod} \; \gcd(A,B))$, i.e., if $\lambda$ is sufficiently small and $\tilde{\sigma}_{j+1}$ satisfies the congruence
\begin{equation}
   \tilde{\sigma}_{j+1} \equiv \left\lceil\frac{v_j}{\lambda}\right\rceil 
 + x_j+\epsilon_j-\left\lceil\frac{v_j+1}{\lambda}\right\rceil \mod{\gcd( p_j, 2v_{j+1}+1)}. \label{eq:sigma_j+1_case2}
\end{equation}
To find the lattice $\Le_{j+1}$, we need to solve this equation in the case $\tilde{\sigma}_{j+1}=\sigma_{j+1}$.

By assumption, the point $z$, given by the module coordinates $a=b=0$, corresponds to the solution 
$2a+b=0$, $\tilde t=t$, for some transit time $t\in\N$. 
Hence the general solution of (\ref{eq:t,2a+b_eqn}) is given by
\begin{align}
 \tilde t&= t -  s\; \frac{p_j}{\gcd( p_j, 2v_{j+1}+1)}, \label{eq:GeneralSolution_t}\\
 2a+b&= s \; \frac{2v_{j+1}+1}{\gcd( p_j, 2v_{j+1}+1)}, \label{eq:GeneralSolution_2a+b}
\end{align}
for $s\in\Z$. The second of these equations implies that $s$ must have the same parity 
as $2a+b$, so we can write $s=2\tilde{a}+b$, where $\tilde{a}\in\Z$ and 
$b\in\{0,\pm 1\}$ for an appropriate choice of sign.
Substituting this expression into equation (\ref{eq:z_tilde}), the points $\tZ$ 
for which $\tilde{\sigma}_{j+1}=\sigma_{j+1}$ are given by
\begin{align*}
 \tZ &= z + \frac{1}{2}\left( s \; \frac{2v_{j+1}+1}{\gcd( p_j, 2v_{j+1}+1)} \mathbf{L}_j -b\mathbf{w}_{v_1,v_1}\right) \\
 &= z  + \frac{1}{2}\left( (2\tilde{a}+b) \mathbf{L}_{j+1} -b\mathbf{w}_{v_1,v_1}\right).
\end{align*}
The last equality is justified by the identities
\begin{align*}
 \frac{2v_{j+1}+1}{\gcd( p_j, 2v_{j+1}+1)} \; \mathbf{L}_j 
&= \frac{(2v_{j+1}+1)(2v_j+1)}{\gcd( (2v_j+1)p_j, (2v_{j+1}+1)(2v_j+1))} 
   \; \frac{\lambda q_j}{2v_1+1} \; (1,1) \\
&= \frac{\lambda q_{j+1}}{2v_1+1} \; (1,1) = \mathbf{L}_{j+1}
\end{align*}
where we have used the relationship $\mbox{\rm lcm}(a,b) = a b/\gcd(a,b)$.
Therefore the first hypothesis (\ref{eq:H1}) remains satisfied.

Substituting the general solution (\ref{eq:GeneralSolution_t}) and (\ref{eq:GeneralSolution_2a+b})  
into equation (\ref{eq:Psi^jp1tilde}) with $\mathbf{x}=\mathbf{e}=(1,0)$, 
and using equation (\ref{eq:Psi^jp1}), we find
\begin{align*}
\Psi^{j+1}(\tZ) &= \Psi^j(z) + \lambda(2a +b) p_j\mathbf{x} + 
      \lambda\epsilon_j \mathbf{x} + \tilde t \mathbf{w}_{v_j,v_{j+1}} \\
&= \Psi^{j+1}(z) + \frac{s p_j}{\gcd( p_j, 2v_{j+1}+1)} \left( \lambda(2v_{j+1}+1)\mathbf{x} - \mathbf{w}_{v_j,v_{j+1}}\right) \\
&= \Psi^{j+1}(z) + \frac{\lambda(2\tilde{a}+b)p_j}{\gcd( p_j, 2v_{j+1}+1)} (2v_j+1)\mathbf{y} \\
&= \Psi^{j+1}(z) + \lambda(2\tilde{a}+b)p_{j+1}\mathbf{y}
\end{align*}
where $\mathbf{y}=(0,1)$.
Thus the points where these $\tZ$ meet the $(j+1)$th vertex share the
same integer coordinate. So hypothesis (\ref{eq:H2}) also remains satisfied, 
completing the induction.

Thus hypotheses (\ref{eq:H1}) and (\ref{eq:H2}) hold for all $1\leq j\leq 2k-1$,
and the equivalence of the three statements follows.
\end{proof}

\begin{corollary} \label{thm:sigma_lattice}
Let $e$ be a critical number, let $k$ be the length of the vertex list of the 
corresponding polygon class, and let $j$ be in the range $1\leq j \leq 2k-1$.
Then two points $z$ and $\tZ$ in $\Xe$ have the same orbit code if and only if
the points $\Psi^j(z)$ and $\Psi^j(\tZ)$ are congruent modulo $\lambda q/(2v_j+1)\,\mathbf{e}$, 
where $\mathbf{e}$ is the unit vector in the direction of the non-integer coordinate 
of the $j$th vertex.
\end{corollary}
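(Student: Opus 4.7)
The corollary is essentially a repackaging of Lemma \ref{thm:sigma_lattices} once one exploits the stabilization (\ref{eq:q_j=q}) of the sequence $(q_j)$. My plan is to first apply the lemma at $j=2k-1$, where $\Le_{2k-1}=\Le$, so that the ``full orbit code match'' hypothesis becomes the equivalent algebraic statement $z \equiv \tZ \pmod{\Le}$. It then suffices to show that this $\Le$-congruence is equivalent to congruence of $\Psi^j(z)$ and $\Psi^j(\tZ)$ modulo $\lambda q/(2v_j+1)\,\mathbf{e}$ for each $1\leq j\leq 2k-1$.

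For the forward direction, suppose $\tZ - z = \frac{1}{2}((2a+b)\mathbf{L} - b \mathbf{w}_{v_1,v_1})$ with $a\in\Z$ and $b\in\{0,\pm1\}$. Since $\mathbf{L} = (q/q_j)\mathbf{L}_j$, the same vector also has the form $\frac{1}{2}((2a+b)(q/q_j)\mathbf{L}_j - b\mathbf{w}_{v_1,v_1})$, which I claim lies in $\Le_j$. Granting this, implication $(ii)\Rightarrow(iii)$ of Lemma \ref{thm:sigma_lattices} at index $j$ yields
\[
 \Psi^j(\tZ) - \Psi^j(z) = \lambda (2a+b)(q/q_j)\, p_j\, \mathbf{e} = \lambda (2a+b)\,\frac{q}{2v_j+1}\,\mathbf{e},
\]
using $p_j = q_j/(2v_j+1)$, which is the desired congruence. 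Conversely, a congruence $\Psi^j(\tZ) - \Psi^j(z) \in \lambda\,\frac{q}{2v_j+1}\,\mathbf{e}\cdot\Z$ is a fortiori a congruence modulo $\lambda p_j \mathbf{e}$, so $(iii)\Rightarrow(ii)$ gives $\tZ - z \in \Le_j$. Comparing the two expressions for $\Psi^j(\tZ)-\Psi^j(z)$ then forces the $\mathbf{L}_j$-coefficient in this representation to be a multiple of $q/q_j$, so the element collapses back into $\Le$, which by the $j=2k-1$ case of Lemma \ref{thm:sigma_lattices} means the full orbit codes of $z$ and $\tZ$ coincide.

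The main subtlety lies in the parity book-keeping: elements of $\Le_j = \langle \mathbf{L}_j, \frac{1}{2}(\mathbf{L}_j - \mathbf{w}_{v_1,v_1})\rangle$ are precisely the vectors $\frac{1}{2}(c_1\mathbf{L}_j - c_2 \mathbf{w}_{v_1,v_1})$ with $c_1 \equiv c_2\pmod 2$, and one must verify that multiplying the $\mathbf{L}$-coefficient by $q/q_j$ (or dividing in the converse direction) preserves this parity relation. The saving observation is that each $q_j$, defined recursively in (\ref{eq:q_j}) as an lcm of products of odd integers $(2v_i+1)(2v_{i+1}+1)$, is itself odd; consequently every ratio $q/q_j$ is odd, so the parities of $(2a+b)$ and $(2a+b)(q/q_j)$ coincide, and similarly for the converse step. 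With this observation in hand, both implications reduce to a direct substitution into the equivalence of Lemma \ref{thm:sigma_lattices}, and no new dynamical input is needed beyond what was already established there.
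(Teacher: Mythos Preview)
Your proof is correct and follows the same line as the paper's: rewrite $\mathbf{L}=(q/q_j)\mathbf{L}_j$ and invoke hypothesis (H2) from the proof of Lemma~\ref{thm:sigma_lattices}, using $p_j=q_j/(2v_j+1)$ to obtain the factor $q/(2v_j+1)$. In fact your argument is more complete than the paper's, which only writes out the forward implication; you also supply the converse by pulling back the $\Le_j$-representation and using the oddness of $q/q_j$ to recover an element of $\Le$, a parity point the paper leaves implicit.
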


\begin{proof}
Recall that for all $1\leq j\leq 2k-1$:
\begin{displaymath}
 \Le \subseteq \Le_j.
\end{displaymath}
Thus any two points which are congruent modulo $\Le$ are also congruent modulo $\Le_j$. 
In particular, if 
\begin{align*}
 \tZ &= z + \frac{1}{2}\left( (2a+b) \mathbf{L} -b\mathbf{w}_{v_1,v_1}\right), \\
 &= z + \frac{1}{2}\left( (2a+b) \frac{q}{q_j} \mathbf{L}_j -b\mathbf{w}_{v_1,v_1}\right),
\end{align*}
then by the hypothesis (\ref{eq:H2}) of lemma \ref{thm:sigma_lattices} we have:
\begin{align*}
\Psi^j(\tZ) &= \Psi^j(z) + \lambda \left((2a+b) \frac{q}{q_j} \right)\, p_j\, \mathbf{e} \\
&= \Psi^j(z) + \lambda(2a+b) \frac{q}{2v_j+1} \, \mathbf{e} 
\end{align*}
as required.
\end{proof}

Lemma \ref{thm:sigma_lattices} shows the equivalence between orbit codes
and congruence classes of $\Le$.
To complete the proof of theorem A, 
we show that the orbit code $\sigma(z)$ 
determines uniquely the behaviour of $z$ under the return map $\Phi$.

\bigskip

\noindent{\sc Proof of theorem A}\quad

Consider two points $z,z+l\in\Xe$ for some $l\in\Le$ given by
\begin{displaymath}
 l= \frac{1}{2}\left( (2a+b) \mathbf{L} -b\mathbf{w}_{v_1,v_1}\right).
\end{displaymath}
These two points have the same orbit code and reach the $(2k-1)$th vertex 
at the points $\Psi^{2k-1}(z),\Psi^{2k-1}(z+l)\in\Lambda_{v_1,-(v_1+1)}$, which, 
by corollary \ref{thm:sigma_lattice}, are congruent modulo 
$\lambda q/(2v_{2k-1}+1)\mathbf{e}$, where $\mathbf{e}$ is the unit vector in the 
non-integer direction. In particular, $\Psi^{2k-1}(z)$ and $\Psi^{2k-1}(z+l)$ are related via
\begin{align}
\Psi^{2k-1}(z+l) &= \Psi^{2k-1}(z) + \lambda(2a+b)p_{2k-1}\mathbf{e}, \nonumber \\
&= \Psi^{2k-1}(z) + \lambda(2a+b)\frac{q}{(2v_1+1)} \; \mathbf{e}, \label{eq:vertex_2k-1}
\end{align}
where we have replaced $v_{2k-1}$ by $v_1$ using the symmetry (\ref{eq:v_symmetry}) of the vertex types. 
We will show that the points where they reach the last vertex are related by a similar equation:
\begin{equation}
(\Psi^{2k-1}\circ F_{\lambda})(z+l) = (\Psi^{2k-1}\circ F_{\lambda})(z) 
   + \lambda(2a+b)\frac{q}{(2v_1+1)} \; \mathbf{e}^{\perp}, \label{eq:vertex_8k-5}
\end{equation}
where the unit vector $\mathbf{e}^{\perp}$, the non-integer direction of the last vertex, is perpendicular to $\mathbf{e}$.

The last vertex of the return orbit ${\mathcal O}_{\tau}(z)$ lies in the set $\Lambda_{v_1,v_1}$, 
so must be close to the image under $F_{\lambda}$ of the $(2k-1)$th vertex (see figure \ref{fig:Phi_Psi}). 
If the $(2k-1)$th vertex lies on the line $x=v_1+1$, it is a simple exercise to show that these 
two points are in fact equal, i.e., that $(\Psi^{2k-1}\circ F_{\lambda})(z) = (F_{\lambda}\circ \Psi^{2k-1})(z)$ 
for any $z\in\Xe$. We consider the less obvious case in which the $(2k-1)$th 
vertex lies on the line $y=-v_1$ and the non-integer direction is $\mathbf{e}=(1,0)$. 
By the orientation of the vector field in the fourth quadrant, the orbit of 
the point $z$ reaches this vertex at the point $\Psi^{2k-1}(z)$ given by:
\begin{equation}
 \Psi^{2k-1}(z) = \lambda\left(\left\lceil\frac{v_1}{\lambda}\right\rceil + x_{2k-1},
 \left\lceil\frac{-v_1}{\lambda}\right\rceil + y_{2k-1}\right), \label{eq:Psi_2k-1}
\end{equation}
where $x_{2k-1}\geq 0$ and $0\leq y_{2k-1}< 2v_1+1$.

\begin{figure}[ht]
        \centering
        \vspace*{-80pt}
        \includegraphics[scale=0.7]{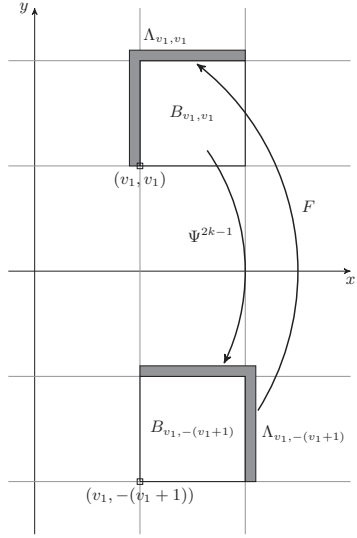}
        \vspace*{-270pt}
        \caption{The $(2k-1)$th and last vertices, joined by the action of $F_{\lambda}$. }
        \label{fig:Phi_Psi}
\end{figure}

Applying $F_{\lambda}$ to (\ref{eq:vertex_2k-1}) and substituting the expression (\ref{eq:Psi_2k-1}), we get:
\begin{align*}
 (F_{\lambda}\circ \Psi^{2k-1})(z+l) &=(F_{\lambda}\circ \Psi^{2k-1})(z) +\lambda(2a+b)\frac{(2v_k+1) \;p}{(2v_1+1)} \; 
 \mathbf{e}^{\perp} \\
&=\lambda\left( v_1-\left\lceil\frac{-v_1}{\lambda}\right\rceil - y_{2k-1}, 
    \left\lceil\frac{v_1}{\lambda}\right\rceil + x_{2k-1} \right) +\lambda(2a+b)\frac{(2v_k+1) \;p}{(2v_1+1)} \; 
 \mathbf{e}^{\perp},
\end{align*}
where the non-integer direction of the last vertex is $\mathbf{e}^{\perp}=(0,1)$.
By equation (\ref{eq:xj<0,yj>0}), if the first component of this point satisfies
$$ 
-(2v_1+1) \leq \left( v_1-\left\lceil\frac{-v_1}{\lambda}\right\rceil 
    - y_{2k-1} \right) - \left\lceil\frac{v_1}{\lambda}\right\rceil < 0,
$$
then the point is the last vertex on $x=v_1$ that we seek for all $(a,b)$:
\begin{equation} \label{eq:Phi_equivariance_case1}
 (\Psi^{2k-1}\circ F_{\lambda})(z+l) = (F_{\lambda}\circ \Psi^{2k-1})(z+l). 
\end{equation}
If the above inequality is not satisfied, then it must be the upper bound that fails; 
since $v_1$ is non-negative, $y_{2k-1}$ satisfies $0\leq y_{2k-1}< 2v_1+1$, and the absolute value
of the two ceiling functions differ by at most one, the lower bound must hold. 
In this case $(F_{\lambda}\circ \Psi^{2k-1})(z+l)\in B_{v_1,v_1}$, 
and we apply $F_{\lambda}^{-4}$ to find:
\begin{align}
 (\Psi^{2k-1}\circ F_{\lambda})(z+l) &= (F_{\lambda}^{-3}\circ \Psi^{2k-1})(z+l) \nonumber \\
&= (F_{\lambda}\circ \Psi^{2k-1})(z+l) - \mathbf{v}((F_{\lambda}^{-3}\circ \Psi^{2k-1})(z+l)) \nonumber \\
&= (F_{\lambda}\circ \Psi^{2k-1})(z+l) -\mathbf{w}_{v_1,v_1}- \lambda\epsilon \, \mathbf{e}^{\perp}, \label{eq:Phi_equivariance_case2}
\end{align}
where the error term $\epsilon$ is independent of $(a,b)$ by proposition \ref{thm:epsilon_j}.
In both cases (\ref{eq:Phi_equivariance_case1}) and (\ref{eq:Phi_equivariance_case2}) the relationship (\ref{eq:vertex_8k-5}) follows.

Using (\ref{eq:vertex_8k-5}), the property (\ref{eq:Psi_congruence1}) of $\Psi$, 
and the expression (\ref{eq:Phi_Psi}) for $\Phi$, we obtain
\begin{align*}
 \Phi(z+l) &\equiv (\Psi^{2k-1}\circ F_{\lambda})(z+l) + \mathbf{v}((\Psi^{2k-1}\circ F_{\lambda})(z+l)) 
                 \mod{\mathbf{w}_{v_1,v_1}} \\
&\equiv (\Psi^{2k-1}\circ F_{\lambda})(z) + \lambda(2a+b)\frac{q}{(2v_1+1)} \; 
        \mathbf{e}^{\perp} + \mathbf{v}((\Psi^{2k-1}\circ F_{\lambda})(z)) \mod{\mathbf{w}_{v_1,v_1}} \\
&\equiv \Phi(z) + \lambda(2a+b)\frac{q}{(2v_1+1)} \; 
             \mathbf{e}^{\perp} \mod{\mathbf{w}_{v_1,v_1}} \\
&\equiv \Phi(z) + \frac{\lambda}{2}(2a+b)\frac{q}{(2v_1+1)} \; 
    \left((\mathbf{e}^{\perp}+\mathbf{e})+(\mathbf{e}^{\perp}-\mathbf{e})\right) \mod{\mathbf{w}_{v_1,v_1}} \\
&\equiv \Phi(z) + \frac{1}{2}(2a+b)\left( \mathbf{L} \pm \frac{q}{(2v_1+1)^2} \; 
         \mathbf{w}_{v_1,v_1} \right) \mod{\mathbf{w}_{v_1,v_1}} \\
&\equiv \Phi(z) + \frac{1}{2}\left((2a+b)\mathbf{L} - b \mathbf{w}_{v_1,v_1}\right) \mod{\mathbf{w}_{v_1,v_1}} \\
&\equiv \Phi(z) + l \mod{\mathbf{w}_{v_1,v_1}},
\end{align*}
where we have also used the fact that $(\mathbf{e}^{\perp}+\mathbf{e})=(1,1)$, 
$(\mathbf{e}^{\perp}-\mathbf{e})=\pm(1,-1)$, and that $q/(2v_1+1)^2$ is odd.
This completes the proof of theorem A. 
\endproof


The set $\Theta^e$ of possible orbit codes is a subset of the product space
\begin{displaymath}
 \Theta^e \subseteq \{ 0,1,\dots,2v_1 \} \times \prod_{j=1}^{2k-1} \{ 0,1,\dots,2v_j \}.
\end{displaymath}
Denoting by $\#\Le$ the scaled co-volume of $\Le$, namely
\begin{equation}\label{eq:HashLe}
\#\Le=|(\lambda\Z)^2/\Le|,
\end{equation}
the total number of possible orbit codes is given by 
\begin{equation} \label{eq:theta_e}
|\Theta^e| = \#\Le = -\frac{1}{2\lambda^2} \; \det\left(\mathbf{L}, \mathbf{w}_{v_1,v_1}\right)
 = q. 
\end{equation}
We note that although the lattice $\Le$ is independent of $\lambda$ (up to scaling), $\Theta^e$ is not. 

In the next lemma, we identify the orbit codes which correspond to symmetric 
fixed points of $\Phi$. 
Subsequently, in lemmas \ref{thm:sigma_j_I} and \ref{thm:sigma_j_II}, we identify values of $e$ 
for which the number of codes which satisfy the conditions of lemma 
\ref{thm:minimal_codes} is independent of $\lambda$.
The proof of theorem B will then follow.

\begin{lemma} \label{thm:minimal_codes}
For any $e\in\cE$ with vertex list $(v_1,\dots,v_k)$, $z\in \Xe$ and sufficiently small $\lambda$, 
the point $z$ is a symmetric fixed point of $\Phi$ if and only if its orbit code 
$\sigma(z)=(\sigma_{-1},\sigma_1,\dots,\sigma_{2k-1})$ satisfies:
\begin{center}
 \textit{(i)} \; $\sigma_{-1}=\sigma_1$, \hspace{2cm} \textit{(ii)} \; $2\sigma_k \equiv \lfloor \sqrt{e} \rfloor \mod{2\lfloor \sqrt{e} \rfloor+1}$.
\end{center}
\end{lemma}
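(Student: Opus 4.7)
The plan is to translate the two defining conditions of a symmetric fixed point of $\Phi$ --- namely closure ($\Phi(z)=z$) and $G$-invariance of $\mathcal{O}(z)$ --- into explicit arithmetic conditions on the orbit code, by exploiting the reversibility $F_\lambda = G\circ I_1$ with $I_1 = G\circ F_\lambda$ a second involution. A direct computation gives $\Fix{G} = \{x=y\}$ and $\Fix{I_1} = \{\lambda(x,y) : 2y = \lfloor\lambda x\rfloor\}$, which for small $\lambda$ lies approximately on the $x$-axis. By classical reversibility theory, a periodic orbit of $F_\lambda$ is $G$-invariant iff it meets $\Fix{G}\cup\Fix{I_1}$, and a minimal symmetric orbit meets each set exactly once per revolution. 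The natural candidates for these two crossings are a neighbourhood of $z$ itself (near $\Fix{G}$) and of the $k$-th vertex $\Psi^k(z)$ (on the $x$-axis, near $\Fix{I_1}$).

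I would first show that condition (i) is the exact lattice condition for the orbit to cross $\Fix{G}$ symmetrically at $z$. Using the coordinate decomposition (\ref{eq:(xj,yj)}) and the explicit form (\ref{eq:sigma_j_2}) of the orbit code at the vertices $\Psi^{-1}(z)$ and $\Psi(z)$ --- which lie in adjacent boxes on opposite sides of the diagonal --- one finds that $G(\Psi^{-1}(z))$ agrees with $\Psi(z)$ modulo the $(2v_1+1)$-periodicity of the transition strip in the integer-coordinate direction precisely when $x_{-1}+(2v_1+1)=y_1$, i.e.\ when $\sigma_{-1}=\sigma_1$. The reversibility identity $F_\lambda^{-1}=GF_\lambda G$ then propagates this local mirror symmetry across the full orbit, so (i) is equivalent to $\mathcal{O}(z)$ being $G$-invariant.

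I would next show that, given (i), condition (ii) is the lattice condition for the orbit also to meet $\Fix{I_1}$ at the $k$-th vertex, which is what closes the orbit: by the standard construction, two crossings of the fixed sets of the two decomposing involutions force $\Phi(z)=z$. Writing $\Psi^k(z) = \lambda(\lceil v_k/\lambda\rceil + x_k, \sigma_k)$ (the $k$-th vertex lies on $y=0$, so $n=0$ and $\sigma_k = y_k$ in (\ref{eq:(xj,yj)})), the equation $2Y = \lfloor\lambda X\rfloor$ for $\Fix{I_1}$ reduces to $2\sigma_k = v_k$ once one substitutes $\lfloor\lambda X\rfloor = v_k$, which holds for small enough $\lambda$ throughout $\Xe$ by the local structure of the transition strip. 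Read modulo the strip period $2v_k+1$, this is exactly condition (ii), since $v_k=\lfloor\sqrt{e}\rfloor$. Necessity for a symmetric fixed point is then immediate, and sufficiency follows by running the two $G$-mirror halves of the orbit from $z$ to $\Psi^k(z)$ and back, using theorem A to conclude that the orbit closes on itself.

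The main obstacle will be handling the case when $v_k$ is odd, where $\Fix{I_1}$ contains no lattice point near $\Psi^k(z)$ and yet (ii) still selects a specific value $\sigma_k = (3v_k+1)/2$. In that case one cannot literally place the $k$-th vertex on $\Fix{I_1}$; instead, one must argue via the equivariance of theorem A together with a careful bookkeeping of the perturbation kicks $\lambda\epsilon_j\mathbf{e}$ from proposition \ref{thm:epsilon_j} that this is the unique residue class whose $G$-reflected half-orbit closes back onto $z$ modulo $\mathbf{w}_{v_1,v_1}$. Once this is verified for both parities of $v_k$, the biconditional (i) $\wedge$ (ii) $\iff$ symmetric fixed point will follow.
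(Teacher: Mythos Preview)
Your approach is essentially the paper's: decompose $F_\lambda$ as a product of two involutions and use the classical fact that a symmetric periodic orbit meets the union of their fixed sets exactly twice. Your $I_1 = G\circ F_\lambda$ with $\Fix{I_1}=\{2y=\lfloor\lambda x\rfloor\}$ near the $x$-axis is the $G$-conjugate of the paper's $H=F_\lambda\circ G$ with $\Fix{H}=\{2x=\lfloor\lambda y\rfloor\}$ near the $y$-axis, so the two arguments are interchangeable up to this reflection. Part (i) and the even-$v_k$ case of (ii) go through exactly as you describe.

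Two points need tightening. First, you skip a case analysis the paper carries out: a symmetric fixed point of $\Phi$ could in principle meet $\Fix{G}$ twice or $\Fix{I_1}$ twice within one revolution (even period, cf.\ the sets in (\ref{eq:s_even})), and you must rule both out before concluding that one crossing of each is forced. The paper notes that $\Fix{G}\cap F_\lambda^{-2}(\Fix{G})$ would give period $4$, which does not occur for $z\neq 0$, and that only one parity-selected segment of $\Fix{H}$ lies within reach of a single revolution, so a double hit there is geometrically impossible.

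Second, your plan for odd $v_k$ via theorem~A and $\epsilon_j$-bookkeeping is the wrong tool; the paper's resolution is a two-line direct computation. When $\lfloor\sqrt{e}\rfloor$ is odd, the relevant segment $H^e_-$ of $\Fix{H}$ sits in the negative $y$-half-plane; the paper simply transports it by $F_\lambda$ to a set near the positive $x$-axis (just below $y=0$, see (\ref{eq:F(FixH)})) and checks whether $F_\lambda^4(\Psi^k(z))$ lands there, giving $\sigma_k=(3\lfloor\sqrt{e}\rfloor+1)/2$ immediately. In your $I_1$-formulation the analogous move is to look at the negative-$x$-axis crossing, where $\lfloor\lambda x\rfloor=-(v_k+1)$ is even and $\Fix{I_1}$ does contain lattice points. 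No lattice equivariance is needed.
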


\begin{proof}
It is a standard property of any reversible map $F$, with $F=G\circ F^{-1}\circ G$, that it can be written 
as the composition of two involutions:
$$ 
 F= H\circ G \hskip 40pt H= F\circ G \hskip 40pt G^2 = H^2 = \mathrm{Id}. 
$$
It follows that every symmetric periodic orbit of $F$ intersects the union of 
the fixed spaces of these involutions, $\Fix{G}\cup\Fix{H}$, at exactly two points 
(which coincide for period 1) \cite{DeVogelaere}.
Furthermore these two points are maximally separated in time, in the sense that if the minimal period 
of the orbit is $s$, then the transit time from one to the other is approximately $s/2$. 
More precisely, if $s$ is even, the orbit must intersect one of the following sets:
\begin{equation} \label{eq:s_even}
 \Fix{G} \cap F^{s/2}(\Fix{G}) \hskip 20pt \mbox{ or }  \hskip 20pt \Fix{H} \cap F^{s/2}(\Fix{H}),
\end{equation}
whereas if $s$ is odd, the orbit must intersect the set
\begin{equation} \label{eq:s_odd}
 \Fix{G} \cap F^{(s-1)/2}(\Fix{H}).
\end{equation}

For our map $F_{\lambda}$, we have already introduced the involution $G$ and its fixed space, 
considered now as a subset of the rescaled lattice $(\lambda\Z)^2$:
 $$ G(x,y) = (y,x) \hskip 20pt \Fix{G} = \{ \lambda(x,y)\in(\lambda\Z)^2 : x=y \}. $$
A simple calculation shows that the involution $H$ and its fixed space are given by:
\begin{equation} \label{eq:H}
H(x,y) = (\lfloor \lambda y\rfloor - x,  y)\hskip 20pt 
  \Fix{H} = \{ \lambda(x,y)\in(\lambda\Z)^2 : 2x = \lfloor \lambda y\rfloor \}.
\end{equation}

Take $e\in\cE$ and a point $z\in\Xe$. Suppose that $z$ is a symmetric fixed point of $\Phi$. 
If $z$ is non-zero, the orbit of $z$ intersects the set $\Fix{G}\cup\Fix{H}$ at exactly two points, 
and as $z$ is a fixed point of $\Phi$, these two points must occur within a single revolution. 
Hence we have:
 $$ | {\mathcal O}_{\tau}(z) \cap \left(\Fix{G}\cup\Fix{H}\right)|=2. $$

We begin by considering which points in the return orbit ${\mathcal O}_{\tau}(z)$ can lie in $\Fix{G}$. 
Since the domain $\Xe$ lies in an $O(\lambda)$-neighbourhood of the positive half of the 
symmetry line $\Fix{G}$, we may have:
 $$ z=\Phi(z)\in\Fix{G}. $$
Equally, the orbit may intersect the negative half of the symmetry line, which occurs if:
 $$ F^4(z) = G(z) \hskip 20pt \Leftrightarrow \hskip 20pt F^2(z)\in\Fix{G}. $$

Points in $\Fix{H}$ lie on disjoint vertical line segments of length one, in an 
$O(\lambda)$-neighbourhood of the $y$-axis. Recall that the polygon $\Pi(z)$ intersects 
the axes at vertices of type $v_k=\lfloor \sqrt{e} \rfloor$, and hence intersects the $y$-axis in the boxes:
 $$ B_{0,\lfloor \sqrt{e} \rfloor} \hskip 20pt \mbox{and} \hskip 20pt  B_{0,-(\lfloor \sqrt{e} \rfloor+1)}. $$
If $\lfloor \sqrt{e} \rfloor$ is even, it follows that the relevant segment $H^e_+$ of $\Fix{H}$ is given by:
$$ 
 H^e_+ = \{ \lambda(x,y)\in(\lambda\Z)^2 : \; 2x =\lfloor \lambda y \rfloor = \lfloor \sqrt{e} \rfloor \}, 
$$
which lies in the positive half-plane.
Similarly if $\lfloor \sqrt{e} \rfloor$ is odd, the relevant segment $H^e_-$ of $\Fix{H}$ is given by:
$$ 
 H^e_- = \{ \lambda(x,y)\in(\lambda\Z)^2 : \; 2x =\lfloor \lambda y \rfloor = -(\lfloor \sqrt{e} \rfloor+1) \}, 
$$
which lies in the negative half-plane.

Hence we see that the return orbit of $z$ cannot intersect $\Fix{H}$ twice in a single revolution, and 
intersects $\Fix{G}$ twice if and only if:
$$ 
 z \in \Fix{G} \cap F^{-2}(\Fix{G}). 
$$
By (\ref{eq:s_even}), the latter implies that $z$ is periodic with period 4, and we have already observed 
that there are no points with minimal period 4. Thus the only non-trivial possibility for a symmetric fixed 
point occurs when the return orbit of $z$ intersects both $\Fix{G}$ and $\Fix{H}$. 
Conversely, equation (\ref{eq:s_odd}) ensures that this is also a sufficient condition.

The proof now proceeds in two parts.  \\

\noindent
\textit{(i) ${\mathcal O}_{\tau}(z)$ intersects $\Fix{G}$ if and only if $\sigma_{-1}=\sigma_1$.}

If $z=\lambda(x,y)$, then the property $\sigma_{-1}=\sigma_1$ is satisfied if and only if:
\begin{displaymath}
 x \equiv y \mod{2v_1+1}. 
\end{displaymath}
If $y=x$ then clearly $z\in\Fix{G}$. 
The width of the strip $\Xe$, given by (\ref{eq:tildeX^eII}), ensures that the only other possibility is $y=x+(2v_1+1)$, in which case:
\begin{align*}
 F^4(z) &= z + \mathbf{w}_{v_1,v_1} \\
 &= \lambda(x+(2v_1+1),y-(2v_1+1)) \\
 &= \lambda(y,x) = G(z).
\end{align*}
This corresponds to $F^2(z)\in\Fix{G}$. \\

\noindent
\textit{(ii) ${\mathcal O}_{\tau}(z)$ intersects $\Fix{H}$ if and only if 
$2\sigma_k \equiv \lfloor \sqrt{e} \rfloor \mod{2\lfloor \sqrt{e} \rfloor+1}$.}

Instead of considering the sets $H^e_+$ and $H^e_-$ directly, we consider 
their images under $G$ and $F_{\lambda}$, respectively, which lie in a neighbourhood of the $x$-axis:
\begin{align}
 G(H^e_+) &= \{ \lambda(x,y)\in(\lambda\Z)^2 : \; 2y 
   =\lfloor \lambda x \rfloor = \lfloor \sqrt{e} \rfloor \},\label{eq:G(FixH)}\\
 F_{\lambda}(H^e_-) &= \{ \lambda(x,y)\in(\lambda\Z)^2 : \; 2y =\lfloor -\lambda (x+1) \rfloor
   = -(\lfloor \sqrt{e} \rfloor+1) \}. \label{eq:F(FixH)}
\end{align}
In (\ref{eq:F(FixH)}), we assume that $\lambda(\lfloor \sqrt{e} \rfloor+1)/2<1$, 
so that $F_{\lambda}(w) = \lambda(-1-y,x)$ for all $w=\lambda(x,y)\in H^e_-$. 
The orbit ${\mathcal O}_{\tau}(z)$ intersects $\Fix{H}$ if and only if it intersects the relevant 
one of these sets, according to the parity of $\lfloor \sqrt{e} \rfloor$.

The polygon $\Pi(z)$ intersects the $x$-axis at the $k$th vertex, where $k$ is the length of the vertex list $V(e)$. 
The return orbit ${\mathcal O}_{\tau}(z)$ reaches the $k$th vertex at the point $\Psi^k(z)$, given in the notation 
of (\ref{eq:(xj,yj)}) by
\begin{displaymath}
 \Psi^k(z) = \lambda\left(\left\lceil\frac{\lfloor \sqrt{e} \rfloor}{\lambda}\right\rceil + x_k, y_k\right),
\end{displaymath}
where, by (\ref{eq:sigma_j_2}), $y_k=\sigma_k$ is non-negative. 
Hence if $\lfloor \sqrt{e} \rfloor$ is even, ${\mathcal O}_{\tau}(z)$ intersects $\Fix{H}$ if and only if:
 $$ \Psi^k(z) \in G(H^e_+) \hskip 20pt \Leftrightarrow \hskip 20pt \sigma_k = \lfloor \sqrt{e} \rfloor/2. $$
If $\lfloor \sqrt{e} \rfloor$ is odd, then ${\mathcal O}_{\tau}(z)$ intersects $\Fix{H}$ if and only if:
\begin{align*}
 F^4_{\lambda}(\Psi^k(z)) \in F_{\lambda}(H^e_-) \hskip 20pt \Leftrightarrow \hskip 20pt 
\sigma_k &= -(\lfloor \sqrt{e} \rfloor+1)/2 + (2v_k+1) \\
&= (3\lfloor \sqrt{e} \rfloor+1)/2.
\end{align*}
The congruence $2\sigma_k \equiv \lfloor \sqrt{e} \rfloor \; ({\rm mod} \; 2\lfloor \sqrt{e} \rfloor+1)$ 
covers both of these cases, which completes the proof.
\end{proof}

For all $e\in\cE$ and sufficiently small $\lambda$, the set $\Xe$
---see equation (\ref{eq:tildeX^eII})---
is non-empty and contains at least one element from every congruence class 
modulo $\Le$. We now seek to identify the number of congruence classes 
whose orbit code satisfies the conditions of lemma \ref{thm:minimal_codes}.

As discussed in the proof of lemma \ref{thm:minimal_codes}, the points $z=\lambda(x,y)\in\Xe$ whose orbit code 
$\sigma(z) = (\sigma_{-1},\sigma_1,\dots,\sigma_{2k-1})$ satisfies $\sigma_{-1} = \sigma_1$ are precisely 
those satisfying
\begin{displaymath}
 y=x \hskip 20pt \mbox{or} \hskip 20pt y = x+(2v_1+1).
\end{displaymath}
All such points lie on one of two lines, parallel to the first generator $\mathbf{L}$ of the lattice $\Le$. 
Furthermore all points on one line are congruent to those on the other, 
as they are connected by the second generator $(\mathbf{L}-\mathbf{w}_{v_1,v_1})/2$.
Hence the number of points satisfying this condition modulo $\Le$ is
\begin{equation}
 |\mathbf{L}| = \frac{\#\Le}{2v_1+1} = \frac{q}{2v_1+1}, \label{eq:sigma_-1=sigma1}
\end{equation}
where we have used the expression (\ref{eq:theta_e}) for $\#\Le$.

It remains to determine what fraction of the orbit codes with $\sigma_{-1} = \sigma_1$ satisfy 
the second condition of lemma \ref{thm:minimal_codes}. 
We do this by identifying values of $e$ for which all possible values of 
$\sigma_k$ occur with equal frequency, independently of $\lambda$.

Clearly if $k=1$, i.e., if a polygon class has just one vertex in the first octant ($e=0$ -- a square), 
then the points $z=\lambda(x,y)\in\Xe$ with $\sigma_k=\sigma_1 = \sigma^*$ satisfy 
\begin{displaymath}
 x \equiv y \equiv \sigma^* \mod{2v_1+1},
\end{displaymath}
for any given $\sigma^*\in\{0,1,\dots,2v_1\}$. Such points form a fraction 
\begin{displaymath}
 \frac{q}{(2v_1+1)^2}
\end{displaymath}
of all points modulo $\Le$. Hence all possible values of $\sigma_k$ occur with equal frequency modulo $\Le$. 
More generally if $v_k=v_1$, i.e., if all vertices of the polygon class have the same type ($e=0,2,8$), 
then the same applies. This follows from the fact that, for any congruence class of $\Le$, 
the map $\sigma_j \mapsto \sigma_{j+1}$ is a permutation of the set $\{0,1,\dots,2v_j\}$ 
whenever $v_j=v_{j+1}$, as we saw in case 1 of the proof of lemma \ref{thm:sigma_lattices}. 

The following lemma deals with the case that a polygon class has two or more distinct vertex types.

\begin{lemma} \label{thm:sigma_j_I}
Let $e\in\cE$. Suppose that the vertex list $(v_1,v_2,\dots,v_k)$ of the 
associated polygon class has at least two distinct entries and satisfies
\begin{equation}
 \gcd(2v_{\iota(l)}+1,p_{\iota(l-1)}) =1, \label{eq:coprimality_I}
\end{equation}
where $(\iota(i))_{i=1}^l$ is the sequence of distinct vertex types defined in (\ref{eq:v_iota}). 
Then for every $z\in\Xe$, all $1\leq j < \iota(l) $, 
all $\sigma^*\in\{0,1,\dots,2v_k\}$,
and all sufficiently small $\lambda$, the number of points in the set $(z + \Le_j)/ \Le$ 
whose orbit code has $k$th entry $\sigma^*$ is
\begin{equation} \label{eq:Lj_sigma_star}
 \frac{1}{2v_k+1} \, |\Le_j/ \Le|. 
\end{equation}
\end{lemma}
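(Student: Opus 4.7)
The plan is to reduce counting elements of $(z+\Le_j)/\Le$ with a prescribed value of $\sigma_k$ to the analogous count on $\sigma_{\iota(l)}$, and then exploit the coprimality hypothesis via the case-2 transition of Lemma \ref{thm:sigma_lattices} at position $\iota(l)$. First I would observe that by the block form (\ref{eq:v_iota}) of the vertex list, the positions $\iota(l), \iota(l)+1, \ldots, k$ all carry the same vertex type $v_k$. Hence each transition in this range falls under case 1 of Lemma \ref{thm:sigma_lattices}, and the congruence (\ref{eq:sigma_j+1_case1}) provides a bijection $\sigma_i \leftrightarrow \sigma_{i+1}$ depending only on the partial code up to position $i$. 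Composing these bijections, $\sigma_k$ is in bijection with $\sigma_{\iota(l)}$ once the first $j+1$ entries of the code are fixed; thus, for every $\sigma^*$, the number of elements of $(z+\Le_j)/\Le$ with $\sigma_k = \sigma^*$ equals the number with $\sigma_{\iota(l)} = \sigma^{**}$ for the corresponding $\sigma^{**}$.

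Next I would decompose $(z+\Le_j)/\Le$ as a disjoint union of cosets of $\Le_{\iota(l)-1}$. Because positions $\iota(l-1), \iota(l-1)+1, \ldots, \iota(l)-1$ all carry the single type $v_{\iota(l-1)}$, no case-2 transition occurs in this range, and the closed form (\ref{eq:q_j_closed_form}) gives $q_{\iota(l)-1} = q_{\iota(l-1)}$ and $p_{\iota(l)-1} = p_{\iota(l-1)}$. The decomposition thus yields $|\Le_j/\Le_{\iota(l)-1}| = q_{\iota(l-1)}/q_j$ cosets, each of size $q/q_{\iota(l-1)}$. Within any such coset, the partial code up through position $\iota(l)-1$ is fixed, and the transition to $\iota(l)$ is of case 2 (since $v_{\iota(l)-1} = v_{\iota(l-1)} \neq v_{\iota(l)}$). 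By (\ref{eq:sigma_j+1_case2}), the achievable values of $\sigma_{\iota(l)}$ inside the coset form a single residue class modulo $\gcd(p_{\iota(l-1)}, 2v_k+1)$, and distinct achievable values correspond to distinct cosets of $\Le_{\iota(l)} = \Le$ inside the chosen coset of $\Le_{\iota(l)-1}$.

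Finally I would invoke the coprimality hypothesis (\ref{eq:coprimality_I}): since $\gcd(p_{\iota(l-1)}, 2v_k+1)=1$, the achievability class exhausts $\{0, 1, \ldots, 2v_k\}$, and the recursive definition of $q_{\iota(l)}$ collapses to $q = q_{\iota(l)} = (2v_k+1)\,q_{\iota(l-1)}$. Therefore every value of $\sigma_{\iota(l)}$, and hence of $\sigma_k$, appears exactly once per coset of $\Le_{\iota(l)-1}$, giving a total of
\[
\frac{q_{\iota(l-1)}}{q_j} \;=\; \frac{q}{(2v_k+1)\,q_j} \;=\; \frac{1}{2v_k+1}\,|\Le_j/\Le|
\]
elements of $(z+\Le_j)/\Le$ with $\sigma_k = \sigma^*$, as required. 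The smallness of $\lambda$ enters only to ensure that $\Xe$ is populated enough to meet every coset of $\Le$ inside $z+\Le_j$. I expect the main technical hurdle to be the careful application of case 2 of Lemma \ref{thm:sigma_lattices}: promoting the constraint on a single residue class to full equidistribution of $\sigma_{\iota(l)}$ requires tracking how cosets of the nested lattices $\Le_{\iota(l)-1} \supsetneq \Le_{\iota(l)}$ subdivide; once this is in hand, the remainder is bookkeeping on the sequences $q_j$ and $p_j$.
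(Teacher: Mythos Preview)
Your proposal is correct and follows essentially the same approach as the paper's own proof. Both reduce to the coset $\Le_{\iota(l)-1}$ (you by decomposing $(z+\Le_j)/\Le$ into its cosets, the paper by noting it suffices to take $j=\iota(l)-1$), then use the case-2 congruence (\ref{eq:sigma_j+1_case2}) with modulus $\gcd(p_{\iota(l-1)},2v_k+1)=1$ to obtain equidistribution of $\sigma_{\iota(l)}$, and finally propagate to $\sigma_k$ via the case-1 bijections; the only difference is that you carry out the case-1 reduction first whereas the paper does it last.
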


\begin{proof}
Pick $e\in\cE$ such that the coprimality condition (\ref{eq:coprimality_I}) holds. 
Let $z\in\Xe$ have orbit code $\sigma(z) = (\sigma_{-1},\sigma_1,\dots,\sigma_{2k-1})$ 
and let the pair $(x_j,y_j)$ be defined as in equation (\ref{eq:(xj,yj)}), where 
$1\leq j < \iota(l) $.

We have to show that all possible values of $\sigma_k$ occur with equal frequency 
among points in $z + \Le_j$ modulo $\Le$. It suffices to prove that the expression 
(\ref{eq:Lj_sigma_star}) holds for $j=\iota(l)-1$, since all cylinder sets of $\Le_j$ 
with index $j<\iota(l)-1$ can be written as a union of cylinder sets of $\Le_{\iota(l)-1}$.

We let $j= \iota(l)-1$, so that
\begin{displaymath}
 v_{j+1} = v_{\iota(l)} = v_k,
\end{displaymath}
and consider the congruence class of $z$ modulo $\Le_j$. 
Since $z+\Le_j$ is a cylinder set in the sense of lemma \ref{thm:sigma_lattices}, 
the orbit codes of all points $\tZ\in z + \Le_j$ match up to the $j$th entry $\sigma_j$.
Let the $(j+1)$th entry of the orbit code of such a $\tZ$ be $\tilde{\sigma}_{j+1}$. 
We wish to show that all possible values of $\tilde{\sigma}_{j+1}$ occur with equal frequency.

By construction $v_j\neq v_{j+1}$, so the possible values of $\tilde{\sigma}_{j+1}$ are determined by 
case 2 of the proof of lemma \ref{thm:sigma_lattices}. 
In the course of the proof, we saw that the occurrence of points $\tZ$ with some fixed value of  
$\tilde{\sigma}_{j+1}$ correspond to solutions of an integer equation, given in the case where $y$ is the 
non-integer coordinate of the $j$th vertex by equation (\ref{eq:t,2a+b_eqn}). 
(A similar equation holds when $x$ is the non-integer coordinate.) 
Each solution $(2a+b,\tilde{t})\in \Z\times\N$ determines the module coordinates $(a,b)$ of 
$\tZ-z$ in $\Le_j$ and the transit time $\tilde{t}$ of $\tZ$ from the $j$th vertex to the $(j+1)$th.

Solutions of (\ref{eq:t,2a+b_eqn}) occur for all values of $\tilde{\sigma}_{j+1}$ 
satisfying the congruence (\ref{eq:sigma_j+1_case2}), and the condition that $\lambda$ 
be sufficiently small ensures that 
at least one such solution is realised by a point $\tZ\in \Xe$. 
By construction, each distinct value of $\tilde{\sigma}_{j+1}$ 
which has a solution defines a unique point in $z + \Le_j$ modulo $\Le_{j+1}$, which is 
isomorphic to the module $\Le_j/\Le_{j+1}$. 
However due to the coprimality condition (\ref{eq:coprimality_I}), the modulus of the congruence 
(\ref{eq:sigma_j+1_case2}) is unity. Hence solutions occur for all possible values of $
\tilde{\sigma}_{j+1}$, and each corresponds to a unique congruence class of 
$z + \Le_j$ modulo $\Le_{j+1}$. Furthermore, by (\ref{eq:q_j=q}), the lattices 
$\Le_{j+1}$ and $\Le$ are equal, hence all possible values of $\tilde{\sigma}_{j+1}$ 
occur with equal frequency in $z + \Le_j$ modulo $\Le$.

If $j+1=\iota(l)=k$ then this completes the proof. If $\iota(l)<k$, take $i$ in the range 
$\iota(l) \leq i <k$. By the definition of $\iota(l)$ as the index of the last distinct vertex type, 
we have $v_{i}=v_{i+1}=v_k$. As discussed above, the map $\sigma_i \mapsto \sigma_{i+1}$ 
is a permutation of the set $\{0,1,\dots,2v_i\}$ whenever $v_i = v_{i+1}$. 
Hence the equal frequency of the possible values of $\tilde{\sigma}_{i}$ implies that of $\tilde{\sigma}_{i+1}$ 
and the result follows.
\end{proof}


In the previous section (equation (\ref{eq:sigma_j})), we defined 
the $j$th entry $\sigma_j$ of the orbit code via the congruence
$\sigma_j \equiv y_j \mod{2v_j+1}$, where $y$ is the integer coordinate 
of the relevant vertex, and the pair $(x_j,y_j)$ is defined by equation (\ref{eq:(xj,yj)}).
Similarly, we define the sequence $\gamma(z)$ such that its $j$th entry $\gamma_j$ satisfies:
\begin{equation}
 \gamma_j \equiv x_j \mod{\frac{q}{2v_j+1}}, \label{eq:gamma_j}
\end{equation}
where $x$ is the non-integer coordinate of the vertex. 
It follows from corollary \ref{thm:sigma_lattice} that, for any $1\leq j \leq 2k-1$, and any two points 
$z,\tZ\in\Xe$ :
\begin{displaymath} \label{eq:(sigma_j,gamma_j)}
 \tZ \equiv z \mod{\Le} \hskip 20pt \Leftrightarrow  \hskip 20pt (\sigma_j,\gamma_j) = (\tilde{\sigma}_j,\tilde{\gamma}_j).
\end{displaymath}

In the following lemma, we use $\gamma(z)$ to identify polygon classes where, 
among points with $\sigma_{-1}=\sigma_1$ and for each $j$ in $1\leq j \leq 2k-1$, 
all possible values of $\sigma_j\in\{0,1,\dots,2v_j\}$ occur with equal 
frequency modulo $\Le$, independently of $\lambda$.

\begin{lemma} \label{thm:sigma_j_II}
Let $e\in\cE$ and suppose that the vertex list $(v_1,v_2,\dots,v_k)$ of the 
associated polygon class is such that $2v_1+1$ is coprime to $2v_j+1$ for all other vertex types $v_j$:
\begin{equation}
 \gcd(2v_1+1,2v_j+1) =1 \hskip 40pt 2\leq j \leq k, \; v_j\neq v_1. \label{eq:v1_coprimality}
\end{equation}
Then for sufficiently small $\lambda$, for all $j$ in $1\leq j \leq 2k-1$,
and all $\sigma^*\in\{0,1,\dots,2v_j\}$, the number $n_j$ of points $z\in \Xe$ 
modulo $\Le$ whose orbit code $(\sigma_{-1},\sigma_1,\dots,\sigma_{2k-1})$ 
has $\sigma_{-1} = \sigma_1$ and $\sigma_j = \sigma^*$ is given by:
\begin{equation}\label{eq:n_j}
n_j=\frac{\#\Le}{(2v_1+1)(2v_j+1)}.
\end{equation}
\end{lemma}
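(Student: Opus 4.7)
The plan is to establish, by induction on $j\in\{1,\dots,2k-1\}$, a refined statement $(H_j)$: for sufficiently small $\lambda$ and every $\sigma^*\in\{0,\dots,2v_j\}$, the number of level-$j$ cylinders (cosets of $\Le_j$ in $\Xe$ modulo $\Le$) satisfying both $\sigma_{-1}=\sigma_1$ and $\sigma_j=\sigma^*$ is exactly $p_j/(2v_1+1)$. Writing $M_j:=2v_j+1$ and multiplying this count by $q/q_j$ atoms per cylinder, using $q_j=p_jM_j$, recovers the lemma's formula $n_j=q/(M_1M_j)=\#\Le/(M_1M_j)$.

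For the base case $j=1$, level-1 cylinders are parametrised by $(\sigma_{-1},\sigma_1)\in(\Z/M_1\Z)^2$, so the constraint $\sigma_{-1}=\sigma_1=\sigma^*$ selects a single cylinder, matching $p_1/M_1=1$. For the inductive step I invoke lemma \ref{thm:sigma_lattices}. In Case A ($v_{j+1}=v_j$), equation (\ref{eq:sigma_j+1_case1}) shows $\sigma_j\mapsto\sigma_{j+1}$ is a single bijection of $\Z/M_j\Z$ common to all cylinders, so $(H_j)\Rightarrow(H_{j+1})$ is immediate. In Case B ($v_{j+1}\neq v_j$), each level-$j$ cylinder splits into $M_{j+1}/d$ level-$(j+1)$ sub-cylinders with $d:=\gcd(M_{j+1},p_j)$, each associated to a distinct value of $\sigma_{j+1}$ in one coset of $d\Z$ in $\Z/M_{j+1}\Z$ whose representative (the \emph{character}) $\chi$ is fixed by equation (\ref{eq:sigma_j+1_case2}).

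The crux will then be to show that, in Case B, $\chi$ is uniformly distributed over $\Z/d\Z$ across the $q_j/M_1$ level-$j$ cylinders with $\sigma_{-1}=\sigma_1$; the consequent count $q_j/(M_1 d)=p_{j+1}/M_1$ of level-$(j+1)$ cylinders per $\sigma_{j+1}$-value will give $(H_{j+1})$. Two preparatory observations set the stage. First, a short induction from $q_1=M_1^2$ through the recursion (\ref{eq:q_j}), using that lcm preserves the factor $M_1^2$, shows $M_1^2\mid q_j$ and hence $M_1\mid p_j$ for all $j$. Second, the block structure (\ref{eq:v_iota}) combined with the mirror symmetry (\ref{eq:v_symmetry}) implies $v_{j+1}\neq v_1$ in Case B at every transition except possibly the unique position $j+1=2k-s_1$, where $s_1$ is the size of the initial $v_1$-block. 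In this generic regime, condition (\ref{eq:v1_coprimality}) gives $\gcd(M_1,d)=1$, and the translations by $(\lambda M_1\Z)^2\subset\Le_1$---which preserve $\sigma_{-1}$ and $\sigma_1$---shift $\gamma_j$ by integers coprime to $d$ (via a suitable rescaling of hypothesis (H2) of lemma \ref{thm:sigma_lattices}), acting transitively on $\Z/d\Z$ and producing the required uniformity of $\chi$.

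The main obstacle will be the exceptional transition $j+1=2k-s_1$, at which $v_{j+1}=v_1$ and $d=M_1$; here the constraint $\sigma_{-1}=\sigma_1$ and the character $\chi$ both live at the modular scale $M_1$, so the preceding translation argument degenerates. I would resolve this subcase by combining the reversibility $F_\lambda^{-1}=G\circ F_\lambda\circ G$ with the $D_4$-symmetry of the polygon class: time reversal of a return orbit interchanges the positions $j$ and $2k-1-j$ in the code (up to an affine involution induced by $G$), so the uniformity of $\sigma_{2k-s_1}$ under the constraint $\sigma_{-1}=\sigma_1$ can be transferred from that of $\sigma_{s_1+1}$, which corresponds to a Case B transition in the generic regime already established earlier in the induction.
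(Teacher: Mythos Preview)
Your overall architecture—inducting on $j$ and handling the parallel/perpendicular cases separately—mirrors the paper's, but your induction hypothesis $(H_j)$ is too weak to close the perpendicular step, and the translation argument you invoke to compensate does not hold.

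The specific gap is the claim that translations by $(\lambda M_1\Z)^2=\Le_1$ ``shift $\gamma_j$ by integers coprime to $d$''. Hypothesis (H2) of lemma~\ref{thm:sigma_lattices} describes the effect on $\Psi^j$ of translations by elements of $\Le_j$, not of $\Le_1$: if $\tilde z-z\in\Le_j$ then $\Psi^j(\tilde z)-\Psi^j(z)$ is a known multiple of $\lambda p_j\mathbf{e}$, but a generic element of $\Le_1$ need not lie in $\Le_j$, and translating by it can change the intermediate code entries $\sigma_2,\dots,\sigma_j$. The map $z\mapsto\Psi^j(z)$ is not affine on $\Le_1$-cosets—each perpendicular transition introduces a transit time depending nonlinearly on the non-integer coordinate—so no ``suitable rescaling'' of (H2) produces the shift you want. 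Hence you cannot deduce the uniform distribution of the character $\chi$ over $\Z/d\Z$ from the bare count $(H_j)$. Your symmetry argument for the exceptional transition $j+1=2k-s_1$ is likewise underspecified: time reversal via $G$ does not give a clean involution on the finite sequence $(\sigma_j)_{1\le j\le 2k-1}$, and it is unclear how it would relate $\sigma_{2k-s_1}$ (modulus $M_1$) to $\sigma_{s_1+1}$ (modulus $M_{s_1+1}\neq M_1$).

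The paper repairs this by strengthening the induction hypothesis: alongside the count $n_j$, it carries the statement that for each fixed value of $\sigma_j$, the auxiliary coordinate $\gamma_j$ of equation~(\ref{eq:gamma_j}) takes every residue modulo $n_j$ exactly once among the $n_j$ relevant classes. In the perpendicular case one checks, using~(\ref{eq:v1_coprimality}) together with $M_1^2\mid q$, that $M_{j+1}\mid n_j$; this holds uniformly, including at your ``exceptional'' position (there $M_{j+1}=M_1$, and $M_1^2\mid q$ with $\gcd(M_1,M_j)=1$ gives $M_1\mid n_j$), so no separate symmetry argument is needed. Uniformity of $\gamma_j$ modulo $n_j$ then directly yields uniformity of $\sigma_{j+1}$, and the remaining work is to propagate the uniform distribution of $\gamma_{j+1}$ modulo $n_{j+1}$, which follows from the explicit transit-time congruence.
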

\begin{proof}
We use induction on $j$. Consider points $z$ whose orbit code satisfies 
$\sigma_{-1}=\sigma_1$ and has $j$th value $\sigma_j$, for some arbitrary 
$\sigma_j\in\{0,1,\dots,2v_j\}$ and $j\in\{1,\dots,2k-1\}$. 
Let the sequence $\gamma(z)$ be denoted $(\gamma_{-1},\gamma_1,\dots,\gamma_{2k-1})$. 
Our induction hypotheses are that: \\
(i) equation (\ref{eq:n_j}) holds, where the coprimality condition (\ref{eq:v1_coprimality}) 
ensures that $n_j$ is a natural number; \\
(ii) for each residue $r \in \{0,1,\dots, n_j-1\}$ modulo $n_j$, there is a unique $z$ modulo $\Le$ satisfying
\begin{displaymath}
 \gamma_j \equiv r \mod{n_j}.
\end{displaymath}

The base case is $j=1$. The points $z=\lambda(x,y)\in\Xe$ with $\sigma_{-1}=\sigma_1$ for some fixed value of $\sigma_1$ satisfy:
\begin{displaymath}
 x \equiv y \equiv \sigma_1 \mod{2v_1+1}.
\end{displaymath}
Such points are congruent modulo $(\lambda(2v_1+1)\Z)^2$, hence the number of such points modulo $\Le$ is
\begin{displaymath}
 \frac{\#\Le}{(2v_1+1)^2} = \frac{q}{(2v_1+1)^2} = n_1.
\end{displaymath}
By lemma \ref{thm:sigma_lattices}, if $z$ is one such point, then any other point $\tZ$ reaches the first vertex at
\begin{displaymath}
 \Psi(\tZ) = \Psi(z) + \lambda s p_1 \mathbf{e}
\end{displaymath}
for some $s\in\Z$, where $p_1=2v_1+1$ and $\mathbf{e}$ is the unit vector in the non-integer coordinate direction. 
Then by the construction of $\gamma(z)$, if $\gamma(\tZ)=(\tilde{\gamma}_{-1},\tilde{\gamma}_1,\dots,\tilde{\gamma}_{2k-1})$, 
the value of $\tilde{\gamma}_1$ is related to $\gamma_1$ by
\begin{displaymath}
\tilde{\gamma}_1 \equiv \gamma_1 + s (2v_1+1) \mod{(2v_1+1) \, n_1}.
\end{displaymath}
By corollary \ref{thm:sigma_lattice}, $z$ and $\tZ$ are congruent modulo $\Le$ if and only if $\tilde{\gamma}_1 = \gamma_1$. 
Thus the $n_1$ distinct points modulo $\Le$ correspond to distinct values of $s$ modulo $n_1$. Furthermore, if we consider the value of $\tilde{\gamma}_1$ modulo $n_1$, we have:
\begin{displaymath}
 \tilde{\gamma}_1 \equiv \gamma_1 + s (2v_1+1) \mod{n_1}.
\end{displaymath}
Now $2v_1+1$ is coprime to the modulus, as by the coprimality condition (\ref{eq:v1_coprimality}) and the construction (\ref{eq:q}) of $q$, two is the highest power of $(2v_1+1)$ that divides $q$. It follows that each distinct value of $\tilde{\gamma}_1$ is distinct modulo $n_1$. This completes the base case.

To proceed with the inductive step, we suppose that the above hypotheses hold for some $j\in\{1,\dots,k-1\}$. 
In the proof of lemma \ref{thm:sigma_lattices} we used equation (\ref{eq:Psi^jp1}) to
describe the behaviour of points as they move from one vertex to the next in two cases. 
The first case occurs when $v_j = v_{j+1}$, so that the $j$th and $(j+1)$th vertices lie on parallel lines, and $n_j = n_{j+1}$. 
In this case, the value of $\sigma_{j+1}$ is determined uniquely by the value of $\sigma_j$. 
In particular, we saw that if the $j$th vertex lies on $y=n$ and the $(j+1)$th vertex lies on $y=n-1$, 
then $\sigma_{j+1}$ and $\sigma_j$ are related by equation (\ref{eq:sigma_j+1_case1}).

We can use the same methods, considering this time the non-integer component of equation 
(\ref{eq:Psi^jp1}), to show that $\gamma_{j+1}$ is determined by the 
pair $(\sigma_j,\gamma_j)$ via:
\begin{displaymath}
 \gamma_{j+1} \equiv \gamma_{j} +\epsilon_j +(2n-1)t \mod{(2v_1+1) \, n_j},
\end{displaymath}
where $\epsilon_j=\epsilon_j(\sigma_j)$, and $t = t(\sigma_j)$ is the transit time between vertices. 
The one-to-one relationship between $\sigma_j$ and $\sigma_{j+1}$, ensures that there are $n_{j+1}=n_j$ 
points with $\sigma_{-1}=\sigma_1$ that achieve any given value of $\sigma_{j+1}$ at the $(j+1)$th vertex. 
Then for any given value of $\sigma_j$, which uniquely determines $\sigma_{j+1}$, the above congruence
establishes a one-to-one relationship between $\gamma_j$ and $\gamma_{j+1}$ modulo $(2v_1+1)n_j$.
Because this bijection is a translation, it also holds modulo $n_j$.
In other words, there is a skew-product map of residue classes modulo $n_j$: 
$(\sigma_j,\gamma_j)\mapsto(\sigma_{j+1},\gamma_{j+1})$.
This completes the inductive step for the first case.

In the second case, where $v_j \neq v_{j+1}$, the $j$th and $(j+1)$th vertices lie on perpendicular lines. 
Again referring to the proof of lemma \ref{thm:sigma_lattices}, taking equation (\ref{eq:t,2a+b_eqn}) 
modulo $2v_{j+1}+1$ gives the following expression for $\sigma_{j+1}$ in terms of the pair $(\sigma_j,\gamma_j)$:
\begin{displaymath}
\left\lceil\frac{v_j+1}{\lambda}\right\rceil +\sigma_{j+1} 
 \equiv \left\lceil\frac{v_j}{\lambda}\right\rceil + \gamma_j +\epsilon_j \mod{2v_{j+1}+1}.
\end{displaymath}
Here we were able to replace $x_j$ with $\gamma_j$ as, by the construction (\ref{eq:q}) of $q$, 
$2v_{j+1}+1$ is a divisor of the modulus $q/(2v_j+1)=(2v_1+1)n_j$ which defines $\gamma_j$. 
If the coprimality condition (\ref{eq:v1_coprimality}) holds, then $2v_{j+1}+1$ also divides $n_j$. 
Hence for any given pair $(\sigma_j,\sigma_{j+1})$, there are $n_j/(2v_{j+1}+1)$ values of 
$\gamma_j$ modulo $n_j$ for which the following congruence is satisfied:
\begin{equation}\label{eq:gamma_j_II}
 \gamma_j \equiv \left\lceil\frac{v_j+1}{\lambda}\right\rceil +\sigma_{j+1} 
 - \left\lceil\frac{v_j}{\lambda}\right\rceil -\epsilon_j + s(2v_{j+1}+1) \mod{n_j}
\end{equation}
where $s\in\Z$. The total number of points with any given value of $\sigma_{j+1}$ is thus:
\begin{displaymath}
 (2v_j+1) \times \frac{n_j}{2v_{j+1}+1} = n_{j+1},
\end{displaymath}
which completes the inductive step for hypothesis (i).

Taking the second component of equation (\ref{eq:Psi^jp1}) modulo $n_{j+1}$ gives an expression for 
$\gamma_{j+1}$ in terms of the pair $(\sigma_j,\gamma_j)$:
\begin{equation} \label{eq:gamma_jp1}
 \gamma_{j+1} 
     \equiv \left\lceil\frac{n}{\lambda}\right\rceil + \sigma_j + (2v_j+1)t - \left\lceil\frac{n-1}{\lambda}\right\rceil \mod{n_{j+1}},
\end{equation}
where $t=t(\sigma_j,\gamma_j)$. For a given pair $(\sigma_j,\sigma_{j+1})$, 
$t$ is given by equation (\ref{eq:t,2a+b_eqn}). 
Hence taking equation (\ref{eq:t,2a+b_eqn}) modulo $n_j$, a multiple of $2v_{j+1}+1$, 
and using the expression (\ref{eq:gamma_j_II}) for $\gamma_j$, 
it follows that the values of $t$ satisfy
\begin{align*}
t+1 &\equiv \frac{ \left\lceil (v_j+1)/\lambda \right\rceil +\sigma_{j+1} 
  - \left\lceil v_j/\lambda \right\rceil - \gamma_j -\epsilon_j}{2v_{j+1}+1} \mod{\frac{n_j}{2v_{j+1}+1}} \\
 &\equiv -s \mod{n_j/(2v_{j+1}+1)},
\end{align*}
where $s\in\Z$. Thus $t$ takes all values modulo $n_j/(2v_{j+1}+1) = n_{j+1}/(2v_j+1)$. 
In turn, equation (\ref{eq:gamma_jp1}) 
implies that $\gamma_{j+1}$ takes all values satisfying:
\begin{displaymath}
 \gamma_{j+1} \equiv \left\lceil\frac{n}{\lambda}\right\rceil + \sigma_j -\left\lceil\frac{n-1}{\lambda}\right\rceil \mod{2v_j+1}.
\end{displaymath}
Applying this argument for all values $\sigma_j\in\{0,1,\dots,2v_j\}$, we get a complete  
residue class modulo $n_{j+1}$ as required. 
This completes the inductive step for hypothesis (ii).
\end{proof}

\noindent{\sc Proof of theorem \ref{thm:minimal_densities}}\quad

Let $e\in\cE$ be given, let $(v_1,\ldots,v_k)$ be the corresponding vertex list,
and let $\sigma^*$ be the unique element of the set $\{0,1,\dots,2v_k\}$ that satisfies
\begin{displaymath}
2\sigma^* \equiv \lfloor\sqrt{e}\rfloor \mod{2\lfloor\sqrt{e}\rfloor+1}.
\end{displaymath}

For $e=0,2,8$ all elements of the vertex list are the same. This case is dealt
with by the discussion preceding lemma \ref{thm:sigma_j_I}.
Thus we assume that the vertex list contains at least two distinct elements.

Suppose first that $2v_1+1$ is coprime to $2v_j+1$ for all $v_j\neq v_1$. 
Then for sufficiently small $\lambda$, lemma \ref{thm:sigma_j_II} states that the number 
$n_k$ of points in $\Xe\cap\Fix{G}$ modulo $\Le$ whose orbit code has $k$th entry $\sigma^*$ is
given by equation (\ref{eq:n_j}), with $j=k$.
Furthermore, since all points $z\in\Xe$ whose orbit code $(\sigma_{-1},\sigma_1,\dots,\sigma_{2k-1})$ 
satisfies $\sigma_{-1}=\sigma_1$ are congruent to some point in $\Xe\cap\Fix{G}$, it follows 
that $n_k$ is the number of points in $\Xe$ modulo $\Le$ satisfying the conditions $(i)$ and $(ii)$ of 
lemma \ref{thm:minimal_codes}. Therefore the number of symmetric fixed points of $\Phi$ in $\Xe$ 
modulo $\Le$ is independent of $\lambda$ and given by $n_k$.

Similarly if $2v_k+1$ is coprime to $2v_j+1$ for all $v_j\neq v_k$, then $2v_k+1$ is coprime 
to $q_{\iota(l-2)}$, given in closed form by (\ref{eq:q_j_closed_form}). If follows that the 
condition (\ref{eq:coprimality_I}) of lemma \ref{thm:sigma_j_I} holds, since the recursive expression (\ref{eq:q_j}) for $q_{\iota(l-1)}$ gives us that:
\begin{align*}
p_{\iota(l-1)} &= \frac{q_{\iota(l-1)} }{2v_{\iota(l-1)} +1} \\
&= \frac{\mbox{\rm lcm} ((2v_{\iota(l-1)} +1)(2v_{\iota(l-2)} +1), q_{\iota(l-2)} ) }{2v_{\iota(l-1)} +1}  \\
&= \frac{(2v_{\iota(l-2)} +1) \, q_{\iota(l-2)}  }{\gcd ((2v_{\iota(l-1)} +1)(2v_{\iota(l-2)} +1), q_{\iota(l-2)} )} .
\end{align*}
Applying lemma \ref{thm:sigma_j_I} for $j=1$, we have that for every cylinder set of $\Le_1$, 
the number of points modulo $\Le$ in the cylinder set whose orbit code has $k$th entry $\sigma^*$ is given by
\begin{displaymath}
 \frac{1}{2v_k+1} |\Le_1/\Le| = \frac{1}{2v_k+1}\frac{\#\Le}{(2v_1+1)^2} = \frac{n_k}{2v_1+1}.
\end{displaymath}
There are $2v_1+1$ cylinder sets of $\Le_1$ whose associated orbit code satisfies $\sigma_{-1}=\sigma_1$. 
Hence, as before, the number of points in $\Xe$ modulo $\Le$ satisfying the conditions $(i)$ and $(ii)$ 
of lemma \ref{thm:minimal_codes} is
\begin{displaymath}
(2v_1+1)\times \frac{n_k}{2v_1+1} = n_k,
\end{displaymath}
and the number of symmetric fixed points of $\Phi$ in  $\Xe$ modulo $\Le$ follows. 
This completes the proof of the first statement.

We have shown that for sufficiently small $\lambda$, and if (\ref{eq:v1_k_coprimality}) 
holds, then the fraction of symmetric fixed points of $\Phi$ in each fundamental domain of $\Le$ is
\begin{displaymath}
\frac{1}{(2v_1+1)(2v_k+1)} = \frac{1}{(2\lfloor\sqrt{e/2}\rfloor+1)(2\lfloor\sqrt{e}\rfloor+1)},
\end{displaymath}
where we have used equations (\ref{eq:v1}) and (\ref{eq:vk}), respectively. 
It remains to show that the density $\delta(e,\lambda)$ of symmetric fixed points in $\Xe$ converges 
to this fraction as $\lambda\rightarrow 0$.

By equation (\ref{eq:tildeX^eII}), the domain $\Xe$ is a subset of the lattice $(\lambda\Z)^2$, bounded 
by a rectangle lying parallel to the symmetry line $\Fix{G}$.
Similarly, a fundamental domain of the lattice $\Le$ is constructed by taking the points bounded 
by the parallelogram $\Omega$, given by
\begin{displaymath}
\Omega = \{ \alpha \mathbf{L} + \frac{\beta}{2}(\mathbf{L}-\mathbf{w}_{v_1,v_1}) : \; \alpha,\beta\in[0,1) \},
\end{displaymath}
where the generator $\mathbf{L}$ is also parallel to the symmetry line. 
These parallelograms tile the plane by translation: $\Omega + \Le = \R^2$.

The width of $\Xe$ (taken in the direction perpendicular to $\Fix{G}$) is exactly twice 
that of $\Omega$, independently of $\lambda$, as shown in Figure \ref{fig:lattice_Le}. 
The number of parallelograms which fit lengthwise into $\Xe$, however, goes to infinity as $\lambda$ goes to zero. 
In particular, the number of parallelograms which can be contained in the interior of the rectangle 
bounding $\Xe$ is given by
\begin{displaymath}
2 \left\lfloor \frac{|\tilde{I}^e|}{\lambda |\mathbf{L}|} \right\rfloor - 8,
\end{displaymath}
where $\lfloor |\tilde{I}^e|/\lambda |\mathbf{L}| \rfloor$ is the number of times that the vector 
$\mathbf{L}$ fits lengthways into the rectangle, and we subtract $8$ for the parallelograms which intersect 
the boundary. Each of these parallelograms contains a complete fundamental domain of $\Le$, and their 
contribution to $\delta(e,\lambda)$ dominates in the limit $\lambda\rightarrow 0$.

Explicitly, we have:
\begin{align*}
\delta(e,\lambda) &= \frac{\#\Le}{\# \Xe}
\left(  \frac{2 \left\lfloor |\tilde{I}^e|/\lambda |\mathbf{L}| \right\rfloor - 8}{(2v_1+1)(2v_k+1)} + O(1) \right) \\
&= \frac{q}{2(2v_1+1)\left\lfloor |\tilde{I}^e|/\lambda \right\rfloor}
\left(  \frac{2\left\lfloor (2v_1+1) |\tilde{I}^e|/\lambda q \right\rfloor - 8}{(2v_1+1)(2v_k+1)} + O(1) \right) \\
&= \frac{1}{(2v_1+1)(2v_k+1)} + O(\lambda),
\end{align*}
as $\lambda\rightarrow 0$.
\endproof


\end{document}